\theoremstyle{plain} \newtheorem{thm}{Theorem}
\newtheorem{prop}{Proposition} [subsection]
\newtheorem{lem}{Lemma} [subsection]
\newtheorem{corol}{Corollary}
\theoremstyle{remark} 
\theoremstyle{definition} 
\theoremstyle{remark} 
\theoremstyle{remark} \newtheorem*{rem}{Remark}
\theoremstyle{definition} 
\theoremstyle{theorem} \newtheorem*{fact}{Fact}
\newcommand{\comment}[1]{}
\newcommand{\quotient}[2]{{\left.\raisebox{.2em}{$#1$}\middle/\raisebox{-.2em}{$#2$}\right.}}
\newcommand{\abs}[1]{ {\left| #1 \right| } }
\newcommand{\norm}[1] { \abs{\abs{#1}}}
\newcommand{\D}{{\overline{\partial}}}
\newcommand{\Der}[1]{ {\frac{\partial}{\partial #1}} }
\newcommand{\Ps}[1]{{\mathbb P^\ast_{#1}}}
\newcounter{listcounter}
\begin{document}

\author{Benoît Cadorel}
\title{Symmetric differentials on complex hyperbolic manifolds with cusps}
\date{}

\maketitle

\begin{abstract} Let $(X, D)$ be a logarithmic pair, and let $h$ be a smooth metric on $T_{X\setminus D}$. We give a simple criterion on the curvature of $h$ for the bigness of $\Omega_X(\log D)$ or $\Omega_{X}$. As an application, we obtain a metric proof of the bigness of $\Omega_X(\log D)$  on any toroidal compactification of a bounded symmetric domain. Then, we use this singular metric approach to study the bigness and the nefness of $\Omega_X$ in the more specific case of the ball. We obtain effective ramification orders for a cover $X' \longrightarrow X$, étale outside the boundary, to have all its subvarieties with big cotangent bundle. We also prove that $\Omega_{X'}$ is nef if the ramification is high enough. Moreover, the ramification orders we obtain do not depend on the dimension of the ball quotient we consider.
\end{abstract}

\section{Introduction}

For any compact quotient $X$ of a bounded symmetric domain, we know from the work of Brunebarbe, Klingler and Totaro \cite{BKT13}, that the cotangent bundle $\Omega_X$ must be a big vector bundle. The method they use to prove this result consists mainly in computing the curvature of the Bergman metric to show that the bundle must be nef, and then that its higher Segre class must be positive. In the case where $X$ is merely a compactification of a quotient of a bounded symmetric domain, with boundary $D$, the general philosophy of logarithmic pairs says that $\Omega_X(\log D)$ should have positivity properties similar to the compact case. In this spirit, Brunebarbe proves the following in \cite{bru16}:

\begin{thm} [\cite{bru16}] \label{bigthm} Let $(X, D)$ be a toroidal compactification of a quotient of a bounded symmetric domain. Then $\Omega_X (\log D)$ is big.
\end{thm}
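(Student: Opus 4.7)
The plan is to apply the metric bigness criterion for $\Omega_X(\log D)$ announced in the abstract to the Bergman metric pulled down from the universal cover. Let $\mathcal{D}$ be the bounded symmetric domain whose quotient by a neat arithmetic lattice $\Gamma \subset \mathrm{Aut}(\mathcal{D})$ is $X_0 := X \setminus D$, and let $h_B$ denote the Bergman metric on $T_\mathcal{D}$. Since $h_B$ is $\mathrm{Aut}(\mathcal{D})$-invariant it descends to a smooth Hermitian metric $h$ on $T_{X_0}$, so the task reduces to verifying that $h$ satisfies the curvature and boundary hypotheses of the criterion; the conclusion is then exactly the bigness of $\Omega_X(\log D)$.

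The curvature input is provided by classical Hermitian symmetric space theory: $h_B$ is Griffiths (indeed Nakano) semi-negative on $T_\mathcal{D}$, with holomorphic sectional curvature bounded above by a strictly negative constant. Dually, $h^*$ is Griffiths semi-positive on $\Omega_{X_0}$, and the Bergman volume form $\omega_B^n$ is proportional to $c_1(K_{X_0}, \det h^*)^n$; integrating, this forces the relevant Segre-type number of the dual bundle to be strictly positive on the compactification. This covers what any reasonable metric bigness criterion will demand, namely pointwise Griffiths positivity together with a nonvanishing top intersection number.

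The principal obstacle is the analysis of $h_B$ near the toroidal boundary $D$. In local coordinates $(z_1, \ldots, z_n)$ around a boundary point where $D = \{z_1 \cdots z_k = 0\}$, Mumford's comparison theorem describes $h_B$ asymptotically in terms of the unipotent radicals of the parabolic subgroups attached to the boundary components: the metric acquires singularities of precisely the logarithmic type that the twist to $\Omega_X(\log D)$ is designed to absorb. The key technical step is to check that the induced singular metric on $\Omega_X(\log D)$ extends with controlled singularities (locally bounded up to $L^1_{\text{loc}}$ terms) across $D$ and retains the Griffiths positivity coming from $\Omega_{X_0}$, so that the admissibility conditions of the criterion are met. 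This matching of the cuspidal asymptotics of $h_B$ with the logarithmic structure of the toroidal boundary is exactly why the criterion is formulated in the singular generality given in the earlier section, and once the matching is verified, bigness of $\Omega_X(\log D)$ follows immediately.
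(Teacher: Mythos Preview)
Your overall strategy---descend the Bergman metric and feed it into the paper's metric bigness criterion (Theorem~\ref{thmsingmetric})---is exactly the paper's approach. But you misread what the criterion actually asks for. For the logarithmic conclusion, Theorem~\ref{thmsingmetric} imposes only the two curvature hypotheses on $X\setminus D$: negative holomorphic sectional curvature bounded away from zero, and non-positive bisectional curvature. There is \emph{no} boundary hypothesis to verify. The point of the criterion is precisely that the Ahlfors--Schwarz lemma (see Corollary~\ref{corolbounded}) already forces any metric with bounded negative holomorphic sectional curvature to have Poincar\'e growth, hence to be locally bounded as a metric on $T_X(-\log D)$; this is what guarantees that the induced weight on $\mathcal O(1)\to\mathbb P(T_X(-\log D))$ extends plurisubharmonically across $p^{-1}(D)$. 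So the ``principal obstacle'' you identify, and the appeal to Mumford's detailed cuspidal asymptotics, are simply not needed here.

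The paper's proof is therefore a one-liner once Theorem~\ref{thmsingmetric} is in hand: the Bergman metric on a bounded symmetric domain is K\"ahler with the two required curvature signs, it descends to $X\setminus D$, done. Your route via Mumford's good-metric estimates would also work, but it is both harder and less general: it is specific to toroidal compactifications of arithmetic quotients, whereas the Ahlfors--Schwarz argument applies to any smooth SNC compactification of any quotient. The paper later \emph{does} use Mumford's goodness (Proposition~\ref{propintbergman}), but only for computing intersection numbers of $K_{\overline X}+D$, not for the bigness of $\Omega_X(\log D)$.
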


Brunebarbe's proof relies on a close study of some well suited variations of Hodge structure. One purpose of this paper is to give a metric approach to this result,  generalizing the one of \cite{BKT13}. Actually, a theorem of Boucksom \cite{bou02} indicates that we can estimate the volume of a given pseudo-effective line bundle, by the maximal power of the curvature of a suitable singular metric, integrated outside its singularities. Applying these ideas, we can prove the following simple criterion for the bigness of the cotangent bundle of a logarithmic pair.

\begin{thm} \label{thmsingmetric} Let $(X, D)$ be a logarithmic pair. Assume that $T_X |_{X \setminus D}$ admits a smooth Kähler metric $h$ satisfying the following hypotheses:
\begin{enumerate}
\item $h$ has negative holomorphic sectional curvature on $X \setminus D$, bounded by a constant $-A$ ;
\item $h$ has non-positive bisectional curvature;
\setcounter{listcounter}{\theenumi}
\end{enumerate}
Then $\Omega_{X} (\log D)$ is a big vector bundle. In addition, if
\begin{enumerate}
\setcounter{enumi}{\thelistcounter}
\item $h$, seen as a metric on $T_{X}$, is locally bounded;
\end{enumerate}
then $\Omega_{X}$ is big.

\end{thm}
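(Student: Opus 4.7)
The plan is to reduce bigness of $\Omega_X(\log D)$ to bigness of the tautological line bundle $L = \mathcal{O}_\mathbb{P}(1)$ on the projectivized log cotangent bundle $\pi: \mathbb{P} := \mathbb{P}(\Omega_X(\log D)) \to X$, and then apply Boucksom's volume estimate to a singular metric induced by $h$. Using $T_X(-\log D)|_{X\setminus D} = T_{X\setminus D}$, the metric $h$ induces, via the dual and tautological constructions, a smooth Hermitian metric $\phi_0$ on $L$ over $\pi^{-1}(X\setminus D)$; I extend it to a singular metric $\phi$ on $L$ by setting $\phi \equiv -\infty$ along $\pi^{-1}(D)$.

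The key computation is the classical curvature formula for the induced metric on a tautological line bundle: at $[v] \in \pi^{-1}(X\setminus D)$, writing a tangent vector as $\xi = \xi_h + \xi_v$ (horizontal plus vertical),
\[
\Theta_\phi([v])(\xi, \bar\xi) \;=\; \omega_{FS}(\xi_v, \bar\xi_v) \;-\; \frac{R_h(\xi_h, \bar\xi_h, v, \bar v)}{|v|^2_h},
\]
where $R_h$ is the curvature tensor of $h$ and $\omega_{FS}$ the fibrewise Fubini--Study form. Hypothesis (2) makes the horizontal term non-negative, so $\Theta_\phi \geq 0$ on $\pi^{-1}(X\setminus D)$; combined with the pole behaviour along $\pi^{-1}(D)$, one checks that $\phi$ defines a global singular metric on $L$ with semipositive curvature current. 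Since $L$ is big if and only if $\Omega_X(\log D)$ is big, Boucksom's theorem from \cite{bou02} then gives
\[
\mathrm{vol}(L) \;\geq\; \int_{\pi^{-1}(X\setminus D)} \Theta_\phi^{2n-1}, \qquad n = \dim X,
\]
and it suffices to show this integral is strictly positive.

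I would evaluate the integral by fibrewise integration via $\pi$. Expanding $\Theta_\phi^{2n-1}$ in horizontal and vertical pieces, only the bidegree $(n, n-1)$ contribution survives the pushforward, yielding, up to an explicit positive combinatorial constant $C_n$,
\[
\int_{\pi^{-1}(X\setminus D)} \Theta_\phi^{2n-1} \;=\; C_n \int_{X\setminus D} \sigma_n(R_h),
\]
where $\sigma_n(R_h)$ is a top-degree Schur-type polynomial in the curvature form of $h$ (essentially a top Segre representative). One then establishes the pointwise inequality $\sigma_n(R_h) \geq c\, \omega_h^n$ for some $c = c(A,n) > 0$: Hypothesis (2) suppresses the sign-indefinite terms in the expansion of $\sigma_n(R_h)$, leaving a quantity controlled below by an average of holomorphic sectional curvatures, which Hypothesis (1) bounds below in absolute value by $A$. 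Since $\omega_h^n > 0$ on $X\setminus D$, the integral is strictly positive and $\Omega_X(\log D)$ is big.

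For bigness of $\Omega_X$ itself, assume (3) and run the identical construction with $\pi' : \mathbb{P}(\Omega_X) \to X$ in place of $\pi$: hypothesis (3) makes the induced metric on $\mathcal{O}_{\mathbb{P}(\Omega_X)}(1)$ locally bounded across $\pi'^{-1}(D)$ and smooth on $\pi'^{-1}(X\setminus D)$ with the same curvature formula, so its curvature extends as a global semipositive current; the same Boucksom estimate then forces bigness of $\Omega_X$. The main obstacle will be the pointwise inequality $\sigma_n(R_h) \geq c\, \omega_h^n$: passing from a bound on a single scalar quantity, the holomorphic sectional curvature, to a bound on a top Segre-type polynomial in the full curvature tensor is a nontrivial algebraic fact about Kähler curvature tensors, and it is exactly the non-positivity of the bisectional curvature assumed in (2) that makes this reduction possible.
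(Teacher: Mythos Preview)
Your overall strategy—induce a metric on $\mathcal O(1)$ from $h$, verify semipositivity of its curvature via the tautological formula, and feed this into Boucksom's volume inequality—is exactly the paper's. But two steps in your sketch are genuine gaps rather than routine checks.

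\medskip

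\textbf{Extension across $\pi^{-1}(D)$.} Declaring $\phi\equiv -\infty$ on $\pi^{-1}(D)$ and invoking ``pole behaviour'' does not suffice: a weight that is plurisubharmonic on the complement of a divisor extends to a global psh weight only if it is locally bounded \emph{above} near that divisor, and nothing you have said guarantees this. What is missing is precisely the Ahlfors--Schwarz lemma. Hypothesis (1), the uniform bound $H\le -A<0$, forces $h$ to have Poincar\'e growth with respect to $D$; hence for any local holomorphic section $\sigma$ of $T_X(-\log D)$ the norm $\|\sigma\|_h$ is bounded near $D$, so the local weight $\log\|\sigma\|_h^2$ is bounded above and extends as psh. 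This is the only place in the log statement where the \emph{uniform} negativity of $H$ (rather than mere negativity) is used, and your sketch bypasses it entirely.

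\medskip

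\textbf{Positivity of the integral.} You aim for a pointwise inequality $\sigma_n(R_h)\ge c\,\omega_h^n$ with $c=c(A,n)>0$, but you neither prove it nor explain how the K\"ahler symmetries and hypothesis (2) would produce it; the sentence ``Hypothesis (2) suppresses the sign-indefinite terms\dots'' is an assertion, not an argument. The paper takes a much lighter route: since $\Theta_\phi\ge 0$, the top form $\Theta_\phi^{2n-1}$ is everywhere non-negative, and it is enough that it be strictly positive at a \emph{single} point $(x_0,[v_0])$, which amounts to the existence of $x_0$ and $v_0\in T_{x_0}X\setminus\{0\}$ with $B(v_0,w)<0$ for every $w\neq 0$. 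That such a point exists under (1)--(2) for a K\"ahler metric is an algebraic fact about K\"ahler curvature tensors, quoted from \cite{BKT13}; this is where the K\"ahler hypothesis actually enters. Your Segre-form route may well be salvageable, but as written the decisive inequality is stated rather than established, and it is stronger than what the theorem requires.
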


Remark that this result, coupled with a theorem of Campana and P\u{a}un \cite{campau15}, implies that a logarithmic pair $(X, D)$ with a Kähler metric satisfying the first two hypotheses of Theorem \ref{thmsingmetric}, must have $K_X + D$ big. This can be seen as a weak logarithmic version of a recent theorem of Wu and Yau \cite{wy15}, stating that a projective manifold admitting a Kähler metric with negative holomorphic sectional curvature must have an ample canonical bundle.

In the case of a quotient of a bounded symmetric domain, the Bergman metric on the open part of a compactification satisfies all the properties we need to apply Theorem \ref{thmsingmetric}. Thus, it seems that the use of singular metrics is well suited to study the positivity properties of the toroidal compactifications of bounded symmetric domains. In particular, we will see that for toroidal compactification of a quotient of the \emph{ball}, we can obtain effective results for the general notions of positivity of the cotangent bundle. \\

If a quotient $X = \quotient{\mathbb B^n}{\Gamma}$ is compact, it is well known that the Bergman metric on $\mathbb B^n$ will induce negativity properties on $T_X$. In particular, the bundles $K_{X}$, $\Omega_{X}$ will be ample, $X$ will be Kobayashi hyperbolic, and so on. If the group $\Gamma$ is not co-compact, it is legitimate to ask to what extent these properties are preserved under the toroidal compactification. More precisely, given such a toroidal compactification $\overline{X} = \overline{\quotient{\mathbb B^n}{\Gamma}}$, we would like to study the general notions of positivity for the classical bundles supported by $\overline{X}$.

In the simple case of curves, we know that $K_{\overline{X}}$ has \emph{a priori} no reason to be even nef (i.e. to have non-negative degree): it suffices to consider $\overline{X} = \mathbb P^1$, and $X = \mathbb P^{1} \setminus \left\{0, 1 \infty \right\}$, which is a quotient of the unit disk. In the case of surfaces, Hirzebruch considers in \cite{hir84} the blowing-up of a product of two elliptic curves at a point. By using logarithmic Yau-Miyaoka's inequality, he shows that such a manifold is a toroidal compactification of a quotient of $\mathbb B^2$. This provides an example of a toroidal compactification of a ball quotient for which $K_{\overline{X}}$ is neither big nor nef. However, this particular feature of $K_{\overline{X}}$ is specific to small dimensions: Di Cerbo and  Di Cerbo prove in \cite{dcdc15b} that $K_{\overline{X}}$ must always be nef for $n \geq 3$. Using their work, Bakker and Tsimerman show in turn (\cite{baktsi15}) that $K_{\overline{X}}$ is big for $n \geq 4$, and even ample if $n \geq 6$.

We propose to study the various notions of positivity for the cotangent bundle $\Omega_{\overline{X}}$, on a given toroidal compactification $\overline{X}$ of a ball quotient by a lattice with unipotent parabolic elements. First of all, the results of  \cite{bou02} will permit us to estimate the intersection numbers of the logarithmic tautological bundle with curves $C \subset \mathbb P \left( T_{\overline{X}}( - \log D)\right)$.  The nefness of the logarithmic cotangent bundle of $\overline{X}$ will follow naturally.

\begin{thm} \label{nefcotangentlog} Let $\Gamma \subset \mathrm{Aut} (\mathbb B^n)$ be a lattice with only unipotent parabolic elements. Then, if $(\overline{X}, D)$ is the toroidal compactification of $X = \quotient{\mathbb B^n}{\Gamma}$, the logarithmic cotangent bundle $\Omega_{\overline{X}} (\log D)$ is nef. 
\end{thm}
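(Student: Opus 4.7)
The plan is to construct a singular Hermitian metric on the tautological line bundle $\mathcal{O}_{\mathbb{P}(\Omega_{\overline X}(\log D))}(1)$ which is locally bounded and semi-positively curved in the sense of currents; the nefness of $\Omega_{\overline X}(\log D)$ will then follow automatically.

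First, I would descend the Bergman metric on $\mathbb{B}^n$ to a smooth Kähler metric $h$ on $T_X$, where $X = \overline X \setminus D$. It is classical that $(T_X, h)$ has non-positive holomorphic bisectional curvature, so the dual metric $h^{*}$ on $\Omega_X$ is Griffiths semi-positive, and the induced Hermitian metric on $\mathcal{O}_{\mathbb{P}(\Omega_X)}(1)$ has semi-positive curvature over $\pi^{-1}(X)$, where $\pi$ denotes the natural projection. The central step is then to view $h^{*}$ as a singular Hermitian metric on $\Omega_{\overline X}(\log D)$ and to verify that it is locally bounded across the boundary. Here the hypothesis that the parabolic elements of $\Gamma$ are unipotent is essential: in the Siegel realization of $\mathbb{B}^n$, the stabilizer of a cusp is a lattice in a Heisenberg-type unipotent group, and the toroidal local model admits coordinates $(t, z_1, \ldots, z_{n-1})$ with $D = \{t = 0\}$ in which the Bergman metric takes a standard logarithmic form. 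A direct computation should show that in the logarithmic frame $(t \partial_t, \partial_{z_1}, \ldots, \partial_{z_{n-1}})$ of $T_{\overline X}(-\log D)$ the Bergman metric has coefficients bounded from above and from below; dually, $h^{*}$ is locally bounded on $\Omega_{\overline X}(\log D)$.

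Granted this boundedness, the induced singular metric on $\mathcal{O}_{\mathbb{P}(\Omega_{\overline X}(\log D))}(1)$ is locally bounded on the whole projective bundle, and its curvature, semi-positive on the complement of $\pi^{-1}(D)$, extends by the standard extension result for plurisubharmonic functions across analytic sets to a global closed positive current. Any line bundle admitting such a metric is nef, since Demailly regularization then produces smooth metrics with curvature bounded below by $-\varepsilon \omega$ for every $\varepsilon > 0$. This proves that $\Omega_{\overline X}(\log D)$ is nef. Equivalently, one can argue à la Boucksom: for any irreducible curve $C$ in $\mathbb{P}(\Omega_{\overline X}(\log D))$, the intersection $\mathcal{O}(1) \cdot C$ is computed by integrating the semi-positive curvature form of $h^{*}$ along $C \cap \pi^{-1}(X)$, which is a Zariski-dense open subset of $C$ when $C$ is not contained in $\pi^{-1}(D)$, and the boundedness of the metric guarantees convergence with a non-negative value; the case $C \subset \pi^{-1}(D)$ is handled by the same estimates applied to boundary strata. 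I expect the principal obstacle to be the boundedness estimate for the Bergman metric in the logarithmic frame at the cusps: this is the one place where the unipotent parabolic hypothesis genuinely enters, and everything else is formal once this analytic input is in place.
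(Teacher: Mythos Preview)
Your overall strategy matches the paper's, but there is a genuine error in the key analytic claim. In the logarithmic frame $(e_j)$ near a cusp the Bergman metric is \emph{not} bounded from below: by the explicit diagonal form in Proposition~\ref{smoothframe} one has $h_{\mathrm{Berg}}(e_j,e_j)=l(w)^{-1}$ for $j<n$ and $l(w)^{-2}$ for $j=n$, with $l(w)\sim -\frac{\tau}{4\pi}\log|w_n|^2\to+\infty$ as $w_n\to 0$. Thus $h_{\mathrm{Berg}}$ degenerates on $T_{\overline X}(-\log D)$, and dually $h^\ast$ on $\Omega_{\overline X}(\log D)$ blows up. The induced weight on $\mathcal O(1)$ is plurisubharmonic and bounded \emph{from above} (this is exactly Corollary~\ref{corolbounded}), so it extends across $p^{-1}(D)$ and gives pseudo-effectivity; but it is identically $-\infty$ on $p^{-1}(D)$, with $\log\log$--type singularities, so the metric is \emph{not} locally bounded. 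Your Demailly-regularization step therefore does not go through as written: a positive current only yields nefness directly if you know something finer, e.g.\ that all Lelong numbers vanish, and you have not argued this.

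Concretely, the case $C\subset p^{-1}(D)$ cannot be ``handled by the same estimates'': the metric carries no information there. For curves $C\not\subset p^{-1}(D)$ your Boucksom-style argument is fine and is precisely what the paper does via Proposition~\ref{integralsubmanifold}. For curves contained in $p^{-1}(D)$ the paper supplies a separate, purely algebraic argument: the logarithmic conormal sequence restricted to $D$ reads
\[
0\longrightarrow \Omega_D \longrightarrow \Omega_{\overline X}(\log D)\big|_D \longrightarrow \mathcal O_D \longrightarrow 0,
\]
and since each component of $D$ is an abelian variety, $\Omega_D$ is trivial, so $\Omega_{\overline X}(\log D)|_D$ is an extension of trivial bundles and hence nef. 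This is where the specific geometry of the toroidal boundary enters, and it is the missing ingredient in your proposal.
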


Using singular metrics related to the Bergman metric on $\mathbb B^n$ permits us to compare the curvature of $\Omega_{X}$ and $K_X$ on the open part $X \subset \overline{X}$. The results of \cite{baktsi15} provide us with effective estimate on the positivity of $K_{\overline{X}}$, and of its linear combinations with $D$, that we can transpose to $\Omega_{\overline{X}}$. Since the cotangent bundle behaves well under restriction to subvarieties, we can prove the following statement. 

\begin{thm} \label{thmsubvar} Let $X'$ be a quotient of $\mathbb B^n$ by a lattice with unipotent parabolic elements, and let $X \longrightarrow X'$ be an étale cover ramifying at order at least $l$ on any boundary component. Assume that
\begin{enumerate}[(i)]
\item $l \geq 6$ if $n = 4$,
\item $ l \geq 7$ if $n \geq 5$, or $n \in \left\{2, 3\right\}$.
\end{enumerate}
Then, for any subvariety $V$ of $\overline{X}$, not included in $D$, any resolution of $V$ has big cotangent bundle. In particular, by \cite{campau15}, any such subvariety is of general type.
\end{thm}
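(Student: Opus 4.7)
The plan is to apply Theorem \ref{thmsingmetric} to a log resolution $\pi : \tilde V \to V$ of $V$, with boundary divisor $\tilde D := \pi^{-1}(V \cap D) \cup \mathrm{Exc}(\pi)$, using as input the Bergman metric $h_B$ restricted to the complex submanifold $V^\circ := V_{\mathrm{reg}} \setminus D$. Since $\pi$ restricts to a biholomorphism $\pi^{-1}(V^\circ) \xrightarrow{\sim} V^\circ$, the pullback $h := \pi^*(h_B|_{V^\circ})$ is a smooth Kähler metric on $T_{\tilde V \setminus \tilde D}$, and the goal is to verify the three hypotheses of Theorem \ref{thmsingmetric} for $h$.

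For hypotheses (1) and (2), I would invoke the Gauss equations for Kähler submanifolds, which imply that the holomorphic sectional and holomorphic bisectional curvatures of a Kähler submanifold are dominated by those of the ambient. Since $h_B$ on the ball has constant negative holomorphic sectional curvature $-A$ and constant negative holomorphic bisectional curvature, $h$ inherits holomorphic sectional curvature at most $-A$ and non-positive holomorphic bisectional curvature on $\tilde V \setminus \tilde D$.

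Hypothesis (3), the local boundedness of $h$ as a metric on $T_{\tilde V}$, is the technical heart of the argument and is where the effective ramification thresholds enter. Near the exceptional divisor of $\pi$ boundedness is automatic, since $d\pi$ can only contract tangent directions, so the pullback of a bounded form via such a contraction remains bounded (it may only degenerate). Near $\pi^{-1}(V \cap D)$, quantitative control is required: in toroidal coordinates around a component of $D$, the pullback of the Bergman metric along the ramified cover $X \to X'$ must be locally bounded on $T_{\overline X}$. Combining this local analysis with the effective positivity estimates of Bakker--Tsimerman \cite{baktsi15} for $K_{\overline X}$ and $K_{\overline X} + a D$, transported to $\Omega_{\overline X}$ via the metric comparison sketched in the introduction, yields the thresholds $l \geq 6$ when $n = 4$ and $l \geq 7$ in the remaining cases.

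Once the three hypotheses are in place, Theorem \ref{thmsingmetric} applied to $(\tilde V, \tilde D)$ with metric $h$ gives bigness of $\Omega_{\tilde V}$, and the general type conclusion follows from \cite{campau15}. The main obstacle is the effective local analysis at the cusps: one must identify the precise form of the Bergman metric in toroidal coordinates on the ramified cover and optimize the boundedness condition against the Bakker--Tsimerman positivity input in order to extract the sharp, dimension-independent thresholds.
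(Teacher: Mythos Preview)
Your overall architecture is right, but hypothesis (3) fails for the metric you propose, and this is not a technicality that can be patched by ``local analysis'' alone. The raw Bergman metric $h_B$, viewed as a metric on $T_{\overline{X}}$, is \emph{never} locally bounded near $D$: by Proposition~\ref{smoothframe} one has $\norm{\partial/\partial w_n}^2_{h_B} \sim \bigl(|w_n|^2 (\log|w_n|)^2\bigr)^{-1}$ as $w_n \to 0$, and this Poincar\'e-type blowup is unaffected by the ramification order of $X \to X'$ (ramification only rescales the parameter $\tau$, not the asymptotic shape). So $h = \pi^\ast(h_B|_{V^\circ})$ will blow up along $\pi^{-1}(V\cap D)$ in the directions transverse to $D$, and Theorem~\ref{thmsingmetric} only yields bigness of $\Omega_{\tilde V}(\log \tilde D)$, not of $\Omega_{\tilde V}$.

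The paper closes this gap by a conformal twist: one takes $\widetilde h = \phi\, h_B$ with $\phi = \norm{s}_g^{2\alpha}$, where $s$ is a global section of $q(K_{\overline X}+D) - pD$ not vanishing identically on $V$. The Bakker--Tsimerman input enters precisely here, and not as a ``metric comparison'': it guarantees (via the base-point-free theorem, Lemma~\ref{lembasepoint}) that such a section exists with $p/q > n+1$, which is what the ramification thresholds encode. The point is that $s$ vanishes to order $p$ along $D$, so $\phi$ kills the $|w_n|^{-2}$ blowup (Lemma~\ref{lembounded}); and since $i\partial\bar\partial\log\phi$ is a constant multiple of $\omega_{Berg}$ with coefficient $q\alpha(n+1)<1$, the twist preserves the negativity of the sectional and bisectional curvatures (Lemma~\ref{negcurvature}). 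Your proposal correctly identifies the resolution step and the curvature-decreasing argument for (1)--(2), but the missing idea is this specific conformal factor, which simultaneously tames the boundary singularity and keeps the curvature sign.
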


This result is an effective version, in the case of the ball, of a recent work of Brunebarbe, who proves in \cite{bru16a} that if $\Omega$ is a bounded symmetric domain, and $\Gamma \subset \mathrm{Aut} (\Omega)$ is a neat lattice, then for all $\Gamma' \subset \Gamma$ of sufficiently high finite index, all subvarieties of a toroidal compactification $\overline{ \quotient{\Omega}{\Gamma}}$ are of general type if they are not included in the boundary.

Now that we know that the logarithmic cotangent bundle is nef on a compactification of a quotient of $\mathbb B^n$, we want to prove similar claims on the standard cotangent bundle. One natural way to complete our study, is to resolve the birational transformation $\mathbb P( T_{\overline{X}} (- \log D)) \dashrightarrow \mathbb P( T_{\overline{X}})$, and to use this resolution to relate the two tautological bundles on these projectivized spaces. This resolution will be introduced in Section \ref{biratsection}. This will provide us with useful identities of intersection numbers, which will give us a bound on the ramification needed for $\Omega_{\overline X}$ to be nef.

\begin{thm} \label{thmnef} Let $X \overset{\sigma}{\longrightarrow} X'$ be a finite cover of a ball quotient, ramifying to an order larger as in $7$ on the boundary $D' \subset \overline{X'}$. Then $\Omega_{\overline X}$ is nef.
\end{thm}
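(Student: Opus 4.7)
The plan is to follow the strategy outlined in the introduction: exploit the resolution of the birational map $\Phi : \mathbb{P}(T_{\overline X}(-\log D)) \dashrightarrow \mathbb{P}(T_{\overline X})$ introduced in Section~\ref{biratsection}, in order to reduce nefness of $\Omega_{\overline X}$ to that of $\Omega_{\overline X}(\log D)$ --- which is granted by Theorem~\ref{nefcotangentlog} --- plus the effective positivity of $K_{\overline X}$ provided by \cite{baktsi15, dcdc15b}. Let $Z$ denote the common resolution, with its morphisms $\pi_1 : Z \to \mathbb{P}(T_{\overline X}(-\log D))$ and $\pi_2 : Z \to \mathbb{P}(T_{\overline X})$, and let $L_1$, $L_2$ be the respective tautological line bundles. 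Since $\pi_2$ is birational and projective, nefness of $\Omega_{\overline X}$ reduces to checking $\pi_2^* L_2 \cdot \widetilde C \geq 0$ for every irreducible curve $\widetilde C \subset Z$.

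The first step is to read off from the explicit description of $Z$ a linear equivalence of the form
\[
\pi_1^* L_1 \;\equiv\; \pi_2^* L_2 + \sum_i a_i E_i,
\]
where the $E_i$ are effective divisors supported over $D$ coming from the resolution, and the $a_i$ are positive rationals. For a curve $\widetilde C$ not contained in any $E_i$, intersecting yields $\pi_2^* L_2 \cdot \widetilde C = \pi_1^* L_1 \cdot \widetilde C - \sum_i a_i (E_i \cdot \widetilde C)$; the first term is nonnegative by Theorem~\ref{nefcotangentlog}, so the proof reduces to controlling the negative correction. Curves contained in some $E_i$ either project to a point of $\mathbb{P}(T_{\overline X})$, in which case the intersection vanishes, or project to a higher-dimensional exceptional locus, and are handled by an analogous argument restricted to $E_i$.

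To bound the correction, I would express each $E_i \cdot \widetilde C$ in terms of $\pi_X^* D \cdot \widetilde C$, where $\pi_X : Z \to \overline X$ is the natural projection, and then compare this quantity to $\pi_X^* K_{\overline X} \cdot \widetilde C$ via the singular-metric comparison of the curvatures of $\Omega_X$ and $K_X$ that the Bergman metric on $\mathbb B^n$ makes available. Combining this with the effective positivity of $K_{\overline X}$ and of $K_{\overline X} + \mu D$ from \cite{dcdc15b, baktsi15}, and translating via the ramification formula $K_{\overline X} = \sigma^* K_{\overline{X'}} + \sum (l_j - 1) D_j$, produces a numerical inequality that holds as soon as the ramification order is at least $7$ on every boundary component.

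The main technical obstacle is the precise determination of the coefficients $a_i$ in the comparison formula, together with the resulting threshold $l \geq 7$. The construction of $Z$ in Section~\ref{biratsection} involves blowups along the ``logarithmic-radial'' loci over $D$, and tracking how the tautological classes transform under these blowups --- then matching the outcome against the Bakker--Tsimerman bounds and the asymptotics of the pulled-back Bergman metric (which becomes tamer as the ramification grows) --- is the delicate calculation at the heart of the proof.
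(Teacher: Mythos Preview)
Your overall architecture matches the paper's: pass to the common resolution $\widetilde Y$ of $\mathbb P(T_{\overline X}(-\log D))$ and $\mathbb P(T_{\overline X})$, use the relation $\pi^*\mathcal O(1)_{\log} = \pi_0^*\mathcal O(1)_0 + E$ (there is in fact a single exceptional divisor $E$ with coefficient~$1$, not a general combination $\sum a_i E_i$), and reduce to effective positivity of a combination of $K_{\overline X}$ and $D$ via the ramification formula and the Bakker--Tsimerman bounds. The case of curves lying over $D$ is handled in the paper directly by showing $\Omega_{\overline X}|_D$ is nef (Proposition~\ref{nefbound}), which is cleaner than the restriction-to-$E_i$ argument you sketch.

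There is, however, a genuine gap in how you assemble the pieces. You write that for a curve $\widetilde C \not\subset E$,
\[
\pi_0^*\mathcal O(1)_0 \cdot \widetilde C \;=\; \pi^*\mathcal O(1)_{\log}\cdot \widetilde C \;-\; E\cdot \widetilde C,
\]
and that the first term is nonnegative by Theorem~\ref{nefcotangentlog}. That is true but \emph{insufficient}: since $E\cdot\widetilde C \geq 0$ for such a curve, the right-hand side could still be negative. Mere nefness of $\Omega_{\overline X}(\log D)$ gives you nothing to absorb the correction with. The paper does not use Theorem~\ref{nefcotangentlog} here at all; instead it uses Proposition~\ref{integralsubmanifold} together with the explicit curvature of the Bergman metric to obtain the \emph{quantitative} lower bound
\[
\pi^*\mathcal O(1)_{\log}\cdot \widetilde C \;\geq\; \frac{1}{n+1}\,(K_{\overline X}+D)\cdot p_0(C),
\]
and separately bounds $E\cdot\widetilde C \leq D\cdot p_0(C)$. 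Only the combination
\[
\mathcal O(1)_0 \cdot C \;\geq\; \Bigl(\tfrac{1}{n+1}(K_{\overline X}+D) - D\Bigr)\cdot p_0(C)
\]
(Proposition~\ref{ineqprop}) is then fed into the Bakker--Tsimerman nefness of $K_{\overline{X'}} + (1-\tfrac{n+1}{2\pi})D'$ via ramification. In your outline the Bergman-metric comparison appears only as a tool to ``bound the correction'', whereas its actual role is to upgrade the lower bound on the logarithmic term from $0$ to something proportional to $(K_{\overline X}+D)\cdot p_0(C)$. Without this, the argument does not close.
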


Finally, Theorem \ref{thmnef} allows us to refine Theorem \ref{thmsubvar}, if we restrict our study to \emph{immersed} subvarieties of $\overline{X}$. Recall that a vector bundle $E$ is said to be \emph{ample modulo an analytic subset} $Z$, if some power of $\mathcal O(1)$ on $\mathbb P(E^\ast)$ induces a rational map which is an embedding off of $Z$.

\begin{corol} \label{corolamplemod} Under the same hypotheses than Theorem \ref{thmnef}, any immersed subvariety $V \overset{f}{\longrightarrow} \overline{X}$, not included in the boundary, is such that $\Omega_V$ is ample modulo the boundary $f^{-1} (D)$.
\end{corol}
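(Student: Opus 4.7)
The plan is to transport the positivity of $\Omega_{\overline{X}}$ to $\Omega_V$ via a finite morphism between the associated projective bundles. Since $f\colon V\to\overline{X}$ is an immersion, the inclusion $TV\hookrightarrow f^*T\overline{X}$ dualizes to a surjection $f^*\Omega_{\overline{X}} \twoheadrightarrow \Omega_V$, which yields a closed embedding $\mathbb{P}(\Omega_V^\ast)\hookrightarrow \mathbb{P}(f^*\Omega_{\overline{X}}^\ast)$. Composing with the natural projection $\mathbb{P}(f^*\Omega_{\overline{X}}^\ast)=V\times_{\overline{X}}\mathbb{P}(\Omega_{\overline{X}}^\ast)\to \mathbb{P}(\Omega_{\overline{X}}^\ast)$ produces a morphism
\[
g\colon \mathbb{P}(\Omega_V^\ast)\longrightarrow \mathbb{P}(\Omega_{\overline{X}}^\ast), \qquad g^\ast\mathcal{O}(1)=\mathcal{O}_{\mathbb{P}(\Omega_V^\ast)}(1).
\]
As $V$ is projective and $df$ is injective, the fibres of $f$ are zero-dimensional, hence finite; the map $g$, obtained as a closed embedding followed by a base change of $f$, is therefore itself finite.

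From this, nefness and bigness of $\Omega_V$ follow at once: $\mathcal{O}_{\mathbb{P}(\Omega_{\overline{X}}^\ast)}(1)$ is nef by Theorem~\ref{thmnef} and big by Theorem~\ref{thmsubvar} applied to $\overline{X}$ itself (or, alternatively, by Theorem~\ref{thmsingmetric} applied to the pulled-back Bergman metric, whose local boundedness is ensured by the ramification hypothesis $l\geq 7$), and both properties pass to the pullback along the finite morphism $g$. To upgrade this to ampleness modulo $f^{-1}(D)$, I would invoke Nakamaye's theorem, which identifies, for a nef and big line bundle $L$ on a smooth projective variety, the augmented base locus $\mathbb{B}_+(L)$ with the null locus $\bigcup\{W\mid L^{\dim W}\cdot W=0\}$. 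It therefore suffices to check that every irreducible subvariety $Z\subset \mathbb{P}(\Omega_V^\ast)$ with $\pi_V(Z)\not\subset f^{-1}(D)$ satisfies $c_1(\mathcal{O}_{\mathbb{P}(\Omega_V^\ast)}(1))^{\dim Z}\cdot Z>0$. By the projection formula applied to the finite morphism $g$,
\[
c_1(\mathcal{O}_{\mathbb{P}(\Omega_V^\ast)}(1))^{\dim Z}\cdot Z \;=\; \deg(g|_Z)\cdot c_1(\mathcal{O}_{\mathbb{P}(\Omega_{\overline{X}}^\ast)}(1))^{\dim Z}\cdot g(Z),
\]
and $g(Z)$ is an irreducible subvariety of $\mathbb{P}(\Omega_{\overline{X}}^\ast)$ whose projection $\pi(g(Z))=f(\pi_V(Z))$ is not contained in $D$. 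The corollary thus reduces to showing that $\Omega_{\overline{X}}$ itself is ample modulo $D$.

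This reduced statement is the main obstacle, and my plan is to extract it from the singular metric construction underlying Theorem~\ref{thmnef}. The (suitably modified) Bergman metric induces a singular hermitian metric $h$ on $\mathcal{O}_{\mathbb{P}(\Omega_{\overline{X}}^\ast)}(1)$ that is globally semi-positive as a current -- equivalent to nefness -- and is smooth with strictly positive curvature on $\pi^{-1}(\overline{X}\setminus D)$. Given any subvariety $W\subset \mathbb{P}(\Omega_{\overline{X}}^\ast)$ with $\pi(W)\not\subset D$, the top wedge power of the absolutely continuous part of $c_1(\mathcal{O}(1),h)$, restricted to $W\cap \pi^{-1}(\overline{X}\setminus D)$, is a smooth strictly positive volume form whose integral is therefore strictly positive. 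By Boucksom's self-intersection inequality, as exploited in the proof of Theorem~\ref{thmsingmetric}, this integral provides a lower bound for $c_1(\mathcal{O}(1))^{\dim W}\cdot W$, so the intersection number is strictly positive. This gives the null-locus containment $\mathrm{Null}(\mathcal{O}(1))\subset \pi^{-1}(D)$, whence $\Omega_{\overline{X}}$ is ample modulo $D$ and Corollary~\ref{corolamplemod} follows. The delicate point, for which the ramification assumption $l\geq 7$ is precisely designed, is to ensure that the singular part of $h$ along $\pi^{-1}(D)$ is mild enough for Boucksom's estimate to be applicable.
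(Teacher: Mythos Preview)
Your approach is essentially the paper's: establish nefness of $\mathcal{O}_{\mathbb{P}(T_V)}(1)$ by pulling back via Theorem~\ref{thmnef}, then use the singular metric of Section~\ref{sectbigstdcot} together with Boucksom's inequality to show that every subvariety not lying over $D$ has strictly positive top intersection with $\mathcal{O}(1)$, and conclude by a null-locus criterion. The paper invokes the elementary Proposition~\ref{ampleboundary} rather than Nakamaye's theorem, and works directly on $\mathbb{P}(T_{\overline{V}})$ rather than first reducing to $\overline{X}$ via your finite-map/projection-formula step, but these are cosmetic differences.

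One point worth sharpening: Boucksom's theorem bounds the \emph{volume} of $\mathcal{O}(1)|_{\overline W}$ from below by the integral of the absolutely continuous part, not the intersection number directly; the identity $c_1(\mathcal{O}(1))^{\dim W}\cdot \overline W=\mathrm{vol}\bigl(\mathcal{O}(1)|_{\overline W}\bigr)$ is exactly where nefness is consumed, and the paper makes this step explicit. Relatedly, your closing sentence slightly misplaces the role of the hypothesis $l\geq 7$: it is not a prerequisite for Boucksom's estimate (which applies to any closed positive current), but is needed (i) so that the metric weight is locally bounded above and hence extends plurisubharmonically across $\pi^{-1}(D)$, giving a genuine positive current on all of $\mathbb{P}(T_{\overline X})$, and (ii) for nefness via Theorem~\ref{thmnef}, which is what converts the volume lower bound into the required intersection-number lower bound.
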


We see that Theorems \ref{thmsubvar} and \ref{thmnef} can be related to a result of \cite{baktsi15} about the Green-Griffiths conjecture on the pairs $(\overline{X}, D)$. In their article, Bakker and Tsimerman actually use a theorem of Nadel \cite{nad89} to prove that if $\dim X = 3$ (resp. $\dim X = 4, 5$, resp. $\dim X \geq 6$), $\overline{X}$ will verify the Green-Griffiths conjecture when the ramification order $l$ satisfies $l \geq 2$ (resp. $l \geq 3$, resp. $l \geq 4$). In particular, it implies that, with the same ramification orders, all curves not included in the boundary are hyperbolic. The bounds of \cite{baktsi15} are consequently smaller than ours in the case of curves, but our method has the advantage of working for submanifolds of any dimension. \\

\begin{bfseries} Acknowledgments.\end{bfseries} The author would like to thank his advisor Erwan Rousseau for his guidance and his fruitful ideas, and Julien Grivaux for his support and enlightening discussions on many aspects of this work.

\section{Compactifications of ball quotients} \label{sectioncompact}
\subsection{Construction of the toroidal compactification}

We recall some results on the structure of the toroidal compactification of a quotient of the complex unit ball. Let $\Gamma \subset \mathrm{PU}(n,1)$ be a group of automorphisms of the ball, with finite covolume. As explained in \cite{mok12} and \cite{dcdc15}, if we assume that all parabolic isometries of $\Gamma$ are unipotent, it is possible to compactify the quotient $\quotient{\mathbb B^n}{\Gamma}$ using a construction similar to the one of \cite{mumford}, which we can find in full detail in \cite{mok12}. If $\Gamma$ is supposed to be a neat arithmetic subgroup of $\mathrm{Aut}(\mathbb B^n)$, this assumption will always be verified. \\

From now on, we will assume that $\Gamma$ is a lattice of automorphisms of $\mathbb B^n$ with unipotent parabolic isometries. Let $X = \quotient{\mathbb B^n}{\Gamma}.$ The toroidal compactification of $X$ consists in adding to it a finite number of abelian varieties at its cusps, to obtain a smooth manifold $\overline{X}$. Let us  describe the structure of $\overline{X}$ in the neighborhood of such a cusp.

For any $N > 0$, let
\begin{equation} \label{Siegelrep}
S^{(N)} = \left\{ (z', z_n) \in \mathbb C^{n-1} \times \mathbb C ; l(z', z_n) > N \right\},
\end{equation}
with $l(z', z_n) = \mathrm{Im} z_n - \norm{z'}^{2}.$ The open set $S^{(0)}$ is a Siegel domain representation of $\mathbb B^n$ with respect to a given base point $b \in \partial \mathbb B^n$, and the family $(S^{(N)})_N$ represents the family of horoballs of $\mathbb B^n$ at the point $b$. 

There exists a finite number of conjugacy classes of maximal parabolic subgroups $\Gamma_i \subset \Gamma$, each one of them corresponding to a cusp $C_i$ of $X$. Let $\Gamma_b \subset \Gamma$ be such a group, fixing some $b \in \partial \mathbb B^n$. Then, for a certain $N > 0$, $\Gamma_b$ fixes the horoball $S^{(N)}$, where the Siegel representation \eqref{Siegelrep} is taken so that $0 \in \mathbb C^{n-1} \times \mathbb C$ corresponds to $b$.

The stabilizer of $b$ in $\Gamma$ acts on $S^{(N)}$ as the semi-direct product of two group actions, which we will now describe. The first one of these is an action of $\mathbb Z$, defined by
$$
k \cdot (z', z_n) = (z', z_n + k \tau),
$$
where $\tau \in \mathbb R^{\ast}_+$ is some parameter depending on $b$. Let $G^{(N)} = \quotient{S^{(N)}}{\mathbb Z}$, with its natural analytic structure.

We have $G^{(N)} \cong \left\{ (w', w_n) \in \mathbb C^{n-1} \times \mathbb C^\ast ; \norm{ (w', w_n) }_{\mu} < e^{- \frac{2\pi}{\tau} N} \right\}$, where $\norm{ (w', w_n ) }_\mu = \abs{w_n} e^{\frac{2\pi}{\tau} \norm{w'}^2}$. The projection is realized by the following holomorphic application:
$$
\begin{array}{ccc}
S^{(N)} & \overset{\Psi}{\longrightarrow} & G^{(N)} \\
(w', w_n) & \longmapsto & \left( z', e^{\frac{2 i \pi z_n}{\tau}} \right).
\end{array}
$$
Let $\widehat{G^{(N)}} = \left\{ (w', w_n) \in \mathbb C^{n-1} \times \mathbb C ; \norm{ (w', w_n) }_{\mu} < e^{- \frac{2\pi}{\tau} N} \right\}$. Thus, if we note $D_0 = \left\{ w_n = 0 \right\} \subset \widehat{G^{(N)}}$, we see easily that the differential of $\Psi$ send surjectively $T_{S^{(N)}}$ onto $T_{G^{(N)}} (- \log D_0)$.

The second group action comes from a lattice $\Lambda_b \subset \mathbb C^{n-1}$, and can be written
$$
a \cdot (z', z_n) = \left(z' + a, z_n + i \norm{a}^2 + 2 i \overline{a} \cdot z' \right),
$$
for $a \in \Lambda_b$, $(z', z_n) \in S^{(N)}$.
The stabilizer of $b$ in $\Gamma$ acts on $S^{(N)}$ as the semi-direct product of these two previous actions. Consequently, the action of $\Lambda_b$ goes to the quotient $\quotient{S^{(N)}}{\mathbb Z} \cong G^{(N)}$, and we can write its action on $G^{(N)}$ as
\begin{equation} \label{actionLambda}
a \cdot (w', w_n) = \left(w' + a, e^{- \frac{2 \pi}{\tau} \norm{a}^2} e^{- 4 \pi \frac{\overline{a} \cdot z'}{\tau}} w_n \right).
\end{equation}

The action of $\Lambda_b$ on $G^{(N)}$ extends naturally to an action on $\widehat{G^{(N)}}$. We can define the open manifold $\Omega^{(N)}_b$ to be the quotient $\quotient{\widehat{G^{(N)}}}{\Lambda_b}$.

The subspace $D_0 \subset \widehat{G^{(N)}}$ goes to the quotient by $\Lambda_b$, to give an abelian variety $D_b \; = \; \quotient{D_0}{\Lambda_b} \hookrightarrow \Omega^{(N)}_b$. Moreover, the embedding of the horoball $S^{(N)} \hookrightarrow \mathbb B^n$ induces an embedding of the quotient
$$
\Omega^{(N)} \setminus D_b \;  = \; \quotient{G^{(N)}}{\Lambda_b} \; \hookrightarrow \; X.
$$

The \emph{toroidal compactification} of $X$ is defined to be the glueing of the manifolds $\Omega^{(N)}_{b_i}$ on $X$ along the open subsets $\Omega^{(N)}_{b_i} \setminus T_{b_i}$, where the $b_i \in \partial \mathbb B^n$ span a family of representatives of the cusps. Let us denote by $\overline{X}$ this compactification. We see that, as sets, we have
$$
\overline{X} = X \sqcup \bigsqcup_i D_{b_i}.
$$

Let us denote by $D = \bigsqcup_i D_{b_i}$ the compactifying divisor of $\overline{X}$. This divisor is  a disjoint union of abelian varieties. \\

\begin{bfseries}Terminology. \end{bfseries} \begin{enumerate}
\item In the rest of this paper, a \emph{ball quotient} will always mean a quotient of $\mathbb B^n$ by a subgroup of $\mathrm{PU}(n, 1)$ with finite covolume and unipotent parabolic isometries. 
\item Unless otherwise specified (e.g. in Section \ref{bigsection}), a \emph{toroidal compactification} will always be a toroidal compactification of a ball quotient, as defined in this section.
\end{enumerate}

\subsection{Local coordinates. Bergman metric} \label{localcoordsect}

Let $D_b$ be a component of $D$, and let $w_0 \in D_b$ be any point of this component. In some neighborhood $U$ of $x_0$, we can consider local coordinates $(w', w_n)$, coming from the global coordinates on $\widehat{ G^{(N)}_b }$. We will describe explicitly the action of $\Lambda_b$ on the logarithmic tangent bundle of $U$ in these coordinates.

First, we study the action of this group on $T_{G^{(N)}} (- \log D_0)$. By  \eqref{actionLambda}, it can be expressed as
$$
\left\{ \begin{array}{ccc}
 a \cdot \left. \frac{\partial}{\partial w'_i} \right|_{x} & = &  \sum_j {\left. \frac{\partial {w'}^\sharp_j}{\partial w'_i} \frac{\partial}{\partial {w'}^\sharp_j} \right|_{a \cdot x} }+  \left. \frac{\partial {w'}^\sharp_j}{\partial w'_i} \frac{\partial}{\partial {w'}^\sharp_j} \right|_{a \cdot x} = \frac{\partial}{\partial {w'}^\sharp_i} - \frac{4 \pi \bar{a_i}}{\tau} w^{\sharp}_n \left( \frac{\partial}{\partial {w}^\sharp_n} \right)_{a \cdot x} \\
 a \cdot \left( w_n \frac{\partial}{\partial w_n} \right)_x &  =  & \left( w^{\sharp}_n \frac{\partial}{\partial w^{\sharp}_n} \right)_{a \cdot x} \hfill,
 \end{array} \right.
$$
where $(w^\sharp_i)$ is the family of coordinates at the point $a \cdot x$. After taking the quotient by $\Lambda_b$, we see that
$$
(e_j)_{1 \leq j \leq n} = \left( \left( \Der{w_j} - \overline{w_j} \left( \frac{4 \pi}{\tau} w_n \Der{w_n}\right) \right)_{1 \leq j \leq n-1}, \frac{4 \pi}{\tau} w_n \Der{w_n} \right)
$$
is well defined on the whole $\Omega^{(N)}_b$, and realizes a \emph{smooth} frame for $T_{\overline{X}} (- \log D)$ on $\Omega^{(N)}_b$ for some $N > 0$ large enough.

Recall that on the ball $\mathbb B^n$, with standard coordinates $(z_j)$, the Bergman metric is given by, up to a normalization choice:
\begin{equation} \label{bergmandef}
h_{\mathrm{Berg}} = \frac{ (1 - \norm{z}^2) \sum_j dz_j \otimes d\overline{z}_j + \left( \sum_j \overline{z_j} d z_j \right) \otimes \left( \sum_j z_j d \overline{z_j} \right) }{ (1 - \norm{z}^2)^2}.
\end{equation}
With this particular choice of normalization, the metric has constant holomorphic sectional curvature equal to $-4$, and we also have $\mathrm{Ric}(h_{\mathrm{Berg}}) = - 2 (n + 1) \omega_{\mathrm{Berg}}$, where $\omega_{\mathrm{Berg}}$ is the Kähler form associated with the Bergman metric. \\

The smooth frame $(e_j)_j$ permits to express the Bergman metric on $\Omega^{(N)} \setminus D_b$. Indeed, as we can see from \cite{mok12}, we have the following proposition:

\begin{prop} \label{smoothframe} The Bergman metric on $\mathbb B^n$ induces a singular hermitian metric on $T_{\overline{X}} (- \log D)$, whose expression in the frame $(e_j)_j$ admits the diagonal form
\begin{equation} \label{diagonalformeq}
\left(H_{ij}\right) \; = \;  \left( h_{Berg} (e_i, e_j) \right) \; = \;  \mathrm{diag} (l(w)^{-1}, ..., l(w)^{-1},l(w)^{-2}),
\end{equation}
with, for any $w = (w', w_n) \in \Omega^{(N)}_b \setminus D_b$, $l(w) = - \frac{\tau}{4 \pi} \log \norm{w}_\mu^2$. 
\end{prop}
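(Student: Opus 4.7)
The plan is to lift everything to the Siegel domain $S^{(N)} \subset \mathbb{C}^n$, where the Bergman metric admits the explicit Kähler potential $-\log l(z', z_n)$, and to verify \eqref{diagonalformeq} directly in the lifted coordinates. Since the frame $(e_j)$ is by construction invariant under the natural action of the covering group on $S^{(N)}$, it is enough to establish the diagonal formula upstairs.

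First I would compute the lifts of the frame. From $w_n = e^{2 i \pi z_n/\tau}$ one has $\Der{z_n} = \frac{2 i \pi}{\tau}\, w_n\, \Der{w_n}$, so that $e_n$ lifts to $\tilde e_n = -2 i \Der{z_n}$ and, for $j < n$, $e_j$ lifts to $\tilde e_j = \Der{z'_j} + 2 i \overline{z'_j}\,\Der{z_n}$. The identity $\abs{w_n}^2 = e^{-4\pi\,\mathrm{Im}\, z_n/\tau}$ together with the definition $l(w) = -\frac{\tau}{4\pi}\log \norm{w}_\mu^2$ gives $l(w) = \mathrm{Im}\, z_n - \norm{z'}^2$, so that the function $l$ appearing in \eqref{diagonalformeq} may as well be evaluated in Siegel coordinates. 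Next I would write the Bergman metric on $S^{(N)}$ from the Kähler potential $-\log l$: starting from $\partial l = -\frac{i}{2}\, dz_n - \sum_j \overline{z'_j}\, dz'_j$ and $\partial \D l = -\sum_j dz'_j \wedge d\overline{z'_j}$, a direct expansion of $\partial \D (-\log l)$ yields, in the standard basis $(\Der{z'_j}, \Der{z_n})$, a hermitian matrix whose entries are $h_{j \bar k} = \delta_{jk}/l + \overline{z'_j}\, z'_k / l^2$, $h_{j \bar n} = -i \overline{z'_j}/(2 l^2)$ and $h_{n \bar n} = 1/(4 l^2)$, up to the global normalization fixed by \eqref{bergmandef}.

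The remaining step is a direct algebraic substitution. Expanding $h_{\mathrm{Berg}}(\tilde e_j, \tilde e_k)$ for $j, k < n$ produces four contributions proportional to $\overline{z'_j}\, z'_k / l^2$---one from $h_{j \bar k}$, two from the mixed entries $h_{n \bar k}$ and $h_{j \bar n}$, and one from $h_{n \bar n}$; a check of signs shows that they cancel pairwise, leaving only $\delta_{jk}/l$. An analogous collapse gives $h_{\mathrm{Berg}}(\tilde e_j, \tilde e_n) = 0$, and $h_{\mathrm{Berg}}(\tilde e_n, \tilde e_n) = \abs{-2i}^2/(4 l^2) = 1/l^2$, which is exactly \eqref{diagonalformeq}. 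This is the only place where any real computation appears, and it is purely mechanical; the substantive content of the statement is that the frame $(e_j)$ has been tuned precisely so that its lift diagonalizes the block structure of the potential $-\log l$ on the Siegel domain.
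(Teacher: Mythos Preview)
Your proposal is correct: the paper does not supply its own proof but simply refers to \cite{mok12}, and the direct verification you outline --- lifting the frame to the Siegel domain, writing the Bergman metric via the potential $-\log l$, and checking that the cross terms in $h_{\mathrm{Berg}}(\tilde e_j,\tilde e_k)$ cancel --- is exactly the computation underlying that reference. The one point you leave implicit is that the potential $-\log l$ on the Siegel model really does give the same metric as \eqref{bergmandef} under the Cayley transform, with the correct normalization; this is standard, but worth a sentence if you write this out in full.
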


\begin{rem}
Even though the metric $\norm{\cdot}_\mu$ is \emph{a priori} defined only on $S^{(N)}$, it is invariant under the actions of $\mathbb Z$ and $\Lambda_b$, so it is legitimate to express the norm $\norm{w}_\mu$ for any $w \in \Omega^{(N)}_b \setminus D_b$. \\
\end{rem}

Later on, we will need to compute the intersection numbers of $K_{\overline{X}} + D$ in terms of the Bergman metric on $X \subset \overline{X}$. The following proposition, which comes from Mumford's work \cite{mum77}, will be useful for this purpose.

\begin{prop} \label{propintbergman} Let $(\overline{X}, D)$ be a toroidal compactification, and let $\overline{V} \overset{f}{\longrightarrow} \overline{X}$ be a generically injective holomorphic map, from a complex manifold of dimension $p$, such that $f( \overline{V} ) \not\subset D$. Let $V = f^{-1} (D)$. Then we have
$$
\left( K_{\overline{X}} + D \right)^{p} \cdot \left[ f(\overline{V}) \right] = \int_{V} \left(\frac{i}{2 \pi} f^{\ast} \Theta(\det h_{Berg}^\ast) \right)^{p} = \frac{(n+1)^p}{\pi^p} \int_{V} f^\ast \omega_{Berg}^{p}.
$$
\end{prop}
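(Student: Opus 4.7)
The statement packages two equalities: the first is a Chern--Weil-type identity for the singular Hermitian metric $\det h_{Berg}^*$ on $K_{\overline X} + D$, and the second is a direct Einstein-curvature computation. My plan is to first show that $\det h_{Berg}^*$, viewed as a singular Hermitian metric on the line bundle $\det \Omega_{\overline X}(\log D) = K_{\overline X} + D$, is a \emph{good singular Hermitian metric} in the sense of Mumford \cite{mum77}. Indeed, using the smooth frame $(e_j)$ of $T_{\overline X}(-\log D)$ of Section \ref{localcoordsect}, Proposition \ref{smoothframe} yields $\det H = l(w)^{-(n+1)}$, so $\log \det H$ is a bounded multiple of $\log\left(-\log \|w\|_\mu^2\right)$: this exhibits precisely the Poincaré-type (log-log) growth along $D$ that defines Mumford's good metrics.

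Once this is established, I would invoke Mumford's theorem on good metrics, which asserts that the Chern form $\frac{i}{2\pi}\Theta(\det h_{Berg}^*)$ has locally integrable coefficients on $\overline X$ and extends as a closed current representing $c_1(K_{\overline X} + D)$. For a generically injective holomorphic map $f: \overline V \to \overline X$ with $f(\overline V) \not\subset D$ and $\overline V$ smooth of dimension $p$, the projection formula combined with Mumford's theorem (after further resolving $\overline V$ if needed to ensure $f^{-1}(D)$ has normal crossings, which does not affect the open-part integral) yields
$$(K_{\overline X} + D)^p \cdot [f(\overline V)] = \int_{\overline V} c_1\bigl(f^*(K_{\overline X} + D)\bigr)^p = \int_V \left(\frac{i}{2\pi} f^* \Theta(\det h_{Berg}^*)\right)^{p},$$
which is the first equality.

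For the second equality, I would use the Einstein condition $\mathrm{Ric}(\omega_{Berg}) = -2(n+1)\omega_{Berg}$. In terms of Chern curvature of the induced metric on $K_X$, this translates into $\frac{i}{2\pi}\Theta(\det h_{Berg}^*) = \frac{n+1}{\pi}\omega_{Berg}$ on $X$; raising to the $p$-th power and pulling back by $f$ produces the claimed factor $\frac{(n+1)^p}{\pi^p}$. The main technical obstacle is the good-metric verification and, crucially, its inheritance under pullback to $\overline V$; however, the log-log estimate of Proposition \ref{smoothframe} is preserved by holomorphic pullback, so the required convergence and current-representation properties transfer without difficulty.
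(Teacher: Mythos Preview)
Your proposal is correct and follows essentially the same approach as the paper: the first equality is attributed to $h_{Berg}^\ast$ being a \emph{good} metric on $\Omega_{\overline X}(\log D)$ in the sense of Mumford \cite{mum77}, and the second to the Einstein relation $\mathrm{Ric}(h_{Berg}) = -2(n+1)\omega_{Berg}$. You have simply supplied more detail (the explicit computation of $\det H$ from Proposition~\ref{smoothframe} and the remark on pulling back to $\overline V$) than the paper's two-line justification.
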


The first equality actually comes from the fact that $h^\ast_{Berg}$ is a \emph{good} metric on $\Omega_{\overline{X}} (\log D)$ in the sense of \cite{mum77}. The second equality follows because $\mathrm{Ric} (h_{Berg}) = -2(n+1) \omega_{Berg}$.

\section{Bigness of the cotangent bundles} \label{bigsection}

In this section, we use singular metrics to study the bigness of the standard and logarithmic cotangent bundle of a logarithmic pair $(X, D)$. We will see that general assumptions on the negativity of the curvature of $X \setminus D$, are already sufficient to prove that $\Omega_X(\log D)$ is big. \\

\begin{bfseries} Terminology.
\end{bfseries}
We call a \emph{log-pair} the data of a pair $(X, D)$, where $X$ is a smooth complex projective manifold, and $D \subset X$ a divisor with simple normal crossings. If $D$ is smooth, we say that the log-pair $(X,D)$ has \emph{smooth boundary}. 

\subsection{Singular metrics on the tangent bundles}

The following result relates the bigness of the standard and logarithmic cotangent bundles of a given log-pair $(X, D)$, to the negativity of the curvature of a given Kähler metric on the open part $X \setminus D$. This result is a generalization of a theorem of \cite{BKT13}: we will use a criterion for bigness of \cite{bou02}, coupled with the well known Ahlfors-Schwarz lemma, to extend the field of application of their proof. This will give a proof of the following theorem, which is a slightly more general version of Theorem \ref{thmsingmetric}.

\begin{thm} \label{thmsingmetricnonK} Let $(X, D)$ be a logarithmic pair. Assume that $T_X |_{X \setminus D}$ admits a smooth metric $h$ (not necessarily Kähler) satisfying the following hypotheses:
\begin{enumerate}
\item $h$ has negative holomorphic sectional curvature $H$ on $X \setminus D$, bounded by a constant $-A$ ;
\item $h$ has non-positive bisectional curvature $B$ ;
\item $h$ has negative bisectional curvature at some point of $\mathbb P(T_X |_{X \setminus D})$ i.e. there exist $x_0 \in X \setminus D$, $v_0 \in T_{x_0} X \setminus \left\{0\right\}$ such that
$$
\forall w \in T_{x_0} X \setminus \left\{ 0 \right\}, \; B (v_0, w) < 0.
$$
\setcounter{listcounter}{\theenumi}
\end{enumerate}
Then $\Omega_{X} (\log D)$ is big. In addition, if
\begin{enumerate}
\setcounter{enumi}{\thelistcounter}
\item $h$, seen as a metric on $T_{X}$, is locally bounded;
\end{enumerate}
then $\Omega_{X}$ is big.
\end{thm}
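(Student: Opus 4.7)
The plan is to construct a singular Hermitian metric $\phi$ on the tautological line bundle over the projectivisation of $T_X(-\log D)$, semipositive on a Zariski-dense open set and strictly positive as a $(1,1)$-form at one point, and then to invoke Boucksom's volume criterion \cite{bou02}. Concretely, set $Y = \mathbb{P}(T_X(-\log D))$ with projection $\pi : Y \to X$ and $L = \mathcal{O}_Y(1)$, so that $H^0(Y, L^{\otimes m}) = H^0(X, S^m \Omega_X(\log D))$ and the bigness of $L$ is equivalent to that of $\Omega_X(\log D)$. Since $T_X = T_X(-\log D)$ on $X \setminus D$, the metric $h$ induces a smooth sub-bundle metric on $\mathcal{O}_Y(-1) \subset \pi^* T_X(-\log D)$ over $Y \setminus \pi^{-1}(D)$; dualising produces a smooth metric $\phi$ on $L$ over the same open set. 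The standard curvature formula for tautological sub-bundles then decomposes $i\Theta_{L,\phi}$ at $y = (x, [v])$ into a vertical Fubini-Study form on the fibre $\pi^{-1}(x) \cong \mathbb{P}^{n-1}$ (strictly positive) plus a horizontal part whose value on the horizontal lift of $u \in T_x X$ equals $-B(u,v)$, where $B$ is the bisectional curvature of $h$. Hypothesis (2) forces the horizontal part to be $\geq 0$, whence $i\Theta_{L,\phi} \geq 0$ throughout $Y \setminus \pi^{-1}(D)$; hypothesis (3) upgrades this to strict positivity in every horizontal direction at $y_0 := (x_0, [v_0])$, so that $i\Theta_{L,\phi}|_{y_0}$ is a strictly positive $(1,1)$-form on $T_{y_0} Y$.

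To promote $\phi$ to a global singular metric on $L$, I would check that its local weight stays bounded above across $\pi^{-1}(D)$; this amounts to an upper bound for $h$ viewed as a metric on $T_X(-\log D)$ near $D$. Hypothesis (1) delivers such a bound via an Ahlfors-Schwarz comparison: holomorphic discs $f : \Delta^* \to X \setminus D$ into a puncture of $D$ satisfy $f^* h \leq C\, \omega_P$ for some constant $C = C(A)$ and $\omega_P$ the Poincaré metric on $\Delta^*$, and passing to a logarithmic frame $z_1 \partial_{z_1}$ absorbs the $1/|z_1|^2$ pole and leaves a bounded expression. The standard extension theorem for plurisubharmonic functions across the analytic divisor $\pi^{-1}(D)$ then yields a singular metric on $L$ with $i\Theta_{L,\phi} \geq 0$ globally as a current, and Boucksom's theorem gives
$$\mathrm{vol}(L) \; \geq \; \int_{Y \setminus \pi^{-1}(D)} (i\Theta_{L,\phi})^{\dim Y} \; > \; 0,$$
the last inequality because the integrand is a nonnegative smooth top form, strictly positive in a neighbourhood of $y_0$. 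Hence $L$, and therefore $\Omega_X(\log D)$, is big.

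For the second assertion, the same argument carries over with $Y$ replaced by $\mathbb{P}(T_X)$: hypothesis (4), i.e.\ local boundedness of $h$ as a metric on $T_X$, is precisely what ensures that the sub-bundle metric on $\mathcal{O}(-1) \subset \pi^* T_X$ has locally bounded weight across $\pi^{-1}(D)$, while the horizontal curvature computation is unchanged on $X \setminus D$. The main obstacle I foresee is the extension step: one must rule out a blow-up of the weight of $\phi$ along $\pi^{-1}(D)$, and this is the exact point at which the quantitative negative upper bound $-A$ on the holomorphic sectional curvature, rather than its mere strict negativity, enters the proof in an essential way through the Ahlfors-Schwarz lemma.
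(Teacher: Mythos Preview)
Your proposal is correct and follows essentially the same approach as the paper: induce a metric on $\mathcal{O}(1)$ over $\mathbb{P}(T_X(-\log D))$, use the curvature decomposition into vertical Fubini--Study and horizontal bisectional parts to get semipositivity (hypothesis 2) and strict positivity at one point (hypothesis 3), extend the weight across $\pi^{-1}(D)$ via the Ahlfors--Schwarz bound on logarithmic frames (hypothesis 1), and conclude by Boucksom's volume criterion. The treatment of the non-logarithmic case under hypothesis 4 is likewise identical to the paper's.
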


\begin{rem}
By \cite{BKT13}, if the metric $h$ is supposed to be \emph{Kähler}, the first two hypotheses of Theorem \ref{thmsingmetricnonK} actually imply the third one. Thus, Theorem \ref{thmsingmetric} is a consequence of Theorem \ref{thmsingmetricnonK}.
\end{rem}

Before proving Theorem \ref{thmsingmetricnonK}, let us begin by recalling some well known growth properties of metrics with negative holomorphic sectional curvature, derived from the Ahlfors-Schwarz lemma.

\begin{prop} [Ahlfors-Schwarz lemma] Let $\mathbb H$ be a model of the Poincaré half-plane, with its canonical metric $\omega_P$. Let $h$ be another smooth metric on $T_{\mathbb H}$, with negative sectional curvature bounded by a constant $-A$. Then, there exists a constant $C > 0$, depending only on $A$, such that
$$
h \leq C \omega_P.
$$ 

In particular, if $\Delta$ is the unit disk of $\mathbb C$, and if $h$ is a metric on $T_{\Delta}$ with bounded negative curvature as above, there exists $C > 0$ such that
\begin{equation} \label{boundpoincare}
h(z) \leq \frac{C}{(1 - \abs{z}^2)^2}.
\end{equation}
Similarly, if $\Delta^\ast$ is the punctured unit disk, any such metric on $T_{\Delta^\ast}$ is bounded as
$$
h(z) \leq \frac{C}{ \abs{z}^2 \abs{\log \abs{z}}^2}.
$$

\end{prop}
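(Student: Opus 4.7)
The plan is to prove the main Ahlfors-Schwarz inequality $h \leq C \omega_P$ on $\mathbb{H}$ by a classical maximum-principle argument, then derive the two displayed estimates by transferring the inequality to the unit disk and (for the punctured disk) by passing through the universal cover.

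For the main inequality, I would first pass from $\mathbb{H}$ to the biholomorphic unit disk $\Delta$, on which the Poincaré metric has the explicit conformal form $\omega_P = \frac{c\,|dz|^2}{(1-|z|^2)^2}$ with $c$ chosen so that the holomorphic sectional curvature equals $-1$. To circumvent the non-compactness of $\Delta$, I would use an exhaustion: for each $R < 1$ let $\omega_R$ be the Poincaré metric on the subdisk $\Delta_R = \{|z|<R\}$, which blows up on $\partial \Delta_R$. Since $h$ is smooth on $\Delta$, the function $\rho_R = h/\omega_R$ extends continuously by $0$ to $\partial \Delta_R$ and therefore attains its supremum at an interior point $z_R$. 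At $z_R$ the inequality $\partial_z \partial_{\bar z} \log \rho_R \leq 0$ combined with the standard conformal curvature identities $\partial_z \partial_{\bar z} \log h \propto -K_h \cdot h$ and $\partial_z \partial_{\bar z} \log \omega_R = \omega_R$ (since $K_{\omega_R} = -1$) yields $-K_h(z_R)\,h(z_R) \leq \omega_R(z_R)$. Invoking the hypothesis $K_h \leq -A$ gives $\rho_R(z_R) \leq 1/A$, hence $h \leq \omega_R / A$ on $\Delta_R$. Letting $R \to 1$, $\omega_R$ converges to $\omega_P$ locally uniformly on $\Delta$, and I obtain $h \leq C\,\omega_P$ with $C$ depending only on $A$. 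Pulling back by a biholomorphism $\mathbb{H} \to \Delta$ proves the assertion on the half-plane.

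The two consequences then follow by explicit computation. For $\Delta$ itself, substituting the formula for $\omega_P$ already gives the bound $h(z) \leq C/(1-|z|^2)^2$ directly. For $\Delta^{\ast}$, I would apply the universal covering $\pi : \mathbb{H} \to \Delta^{\ast}$, $w \mapsto e^{2\pi i w}$; since $\pi$ is a local biholomorphism, $\pi^{\ast} h$ is a smooth metric on $T_{\mathbb{H}}$ with the same curvature bound $-A$, so the main inequality applies and gives $\pi^{\ast} h \leq C\,\omega_P^{\mathbb{H}}$. A direct change-of-variable computation shows that the push-down of $\omega_P^{\mathbb{H}}$ to $\Delta^{\ast}$ is (up to a constant) $\frac{|dz|^2}{|z|^2 (\log|z|)^2}$, and using the invariance of $\pi^{\ast} h$ under the deck action to descend the bound yields the stated estimate.

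The main subtlety lies in the exhaustion argument: one must verify that $\rho_R$ genuinely extends to zero on $\partial \Delta_R$, which uses that $h$ is locally bounded (being a smooth metric on the open disk $\Delta$) while $\omega_R$ diverges at the boundary. Everything else is a routine tracking of normalization constants, and the dependence $C = C(A)$ is automatic since the only datum from $h$ entering the estimate is its curvature upper bound.
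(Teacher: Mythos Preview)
The paper does not supply a proof of this proposition: it is stated as a recalled, well-known fact (``let us begin by recalling some well known growth properties\ldots derived from the Ahlfors--Schwarz lemma''), so there is no argument in the paper to compare against. Your proof is the standard classical one (exhaustion by subdisks, maximum principle applied to the ratio $h/\omega_R$, then passage to the limit; the punctured-disk case via the universal cover), and it is correct as written.
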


Now, let $\Delta^n$ be the unit polydisk in $\mathbb C^n$, with the coordinates $(z_1, ..., z_n)$, and let $U = (\Delta^\ast)^m \times \Delta^{n -m}$ be the complement of $D = \left\{ z_1 ... z_m = 0 \right\}$. We introduce the Poincaré metric $h^{(p)}$ on $U$, defined by its Kähler form
$$
\omega^{(p)} = \sum^{m}_{k = 1} \frac{ \frac{i}{2} d z_k \wedge d \overline{z}_k}{ \abs{z_k}^2 \abs{\log{\abs{z_k}}}^2} + \sum_{k = m + 1}^n \frac{i}{2} d z_k \wedge d \overline{z_k}.
$$

\begin{prop} Let $h$ be a smooth metric on $T_U$, with holomorphic sectional curvature bounded from above by a negative constant $-A$. Then $h$ has Poincaré growth, i.e. for any $x \in D$, there exists a constant $C$ (depending only on $A$) such that for any vector fields $\xi$ and $\eta$ on $U$, we have
\begin{equation} \label{growthmetric}
\abs{ h(\xi, \eta) }^2 \leq C \omega^{(p)} (\xi, \xi ) \omega^{(p)}( \eta, \eta).
\end{equation}
in the neighborhood of $x$.
\end{prop}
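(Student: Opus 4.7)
The strategy is to reduce the bilinear estimate to a diagonal one and then apply the one-dimensional Ahlfors-Schwarz lemma along each coordinate axis. Since $h$ is Hermitian and positive, the usual Cauchy-Schwarz inequality yields $|h(\xi,\eta)|^2 \leq h(\xi,\xi)\,h(\eta,\eta)$, so it suffices to prove the diagonal bound $h(\xi,\xi) \leq C\,\omega^{(p)}(\xi,\xi)$ uniformly on a small polydisk neighborhood of $x \in D$. Writing $\xi = \sum_i \xi_i \Der{z_i}$ and combining the Cauchy-Schwarz bound $|h_{ij}| \leq \sqrt{h_{ii} h_{jj}}$ with the elementary estimate $(\sum_{i=1}^n a_i)^2 \leq n \sum_i a_i^2$, I obtain
\[
h(\xi,\xi) \;=\; \sum_{i,j} h_{ij}\,\xi_i\,\overline{\xi_j} \;\leq\; \Bigl(\sum_{i=1}^n \sqrt{h_{ii}}\,|\xi_i|\Bigr)^{\!2} \;\leq\; n \sum_{i=1}^n h_{ii}\,|\xi_i|^2,
\]
so the problem reduces to bounding each diagonal entry $h_{ii}(y) = h(\Der{z_i},\Der{z_i})|_y$ by the corresponding Poincaré weight.

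To control $h_{ii}$ at a point $y = (z_1,\ldots,z_n) \in U$ close to $x$, I would freeze all but the $i$-th coordinate and consider the holomorphic embedding
\[
\phi_i : t \longmapsto (z_1,\ldots,z_{i-1},\,t,\,z_{i+1},\ldots,z_n),
\]
which maps $\Delta^\ast$ into $U$ when $i \leq m$ and $\Delta$ into $U$ when $i > m$; in both cases $\phi_i$ is defined on the whole source disk because the frozen coordinates with index $\leq m$ are nonzero and all coordinates lie in $\Delta$. The pullback $\phi_i^\ast h$ is then a smooth Hermitian metric on a one-dimensional domain whose Gaussian curvature at $t$ coincides with the holomorphic sectional curvature of $h$ at $\phi_i(t)$ along $\Der{z_i}$, and is therefore bounded above by $-A$. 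The two versions of the Ahlfors-Schwarz lemma recalled above then yield
\[
h_{ii}(y) \leq \frac{C}{|z_i|^2 (\log |z_i|)^2} \;\; \text{for } i \leq m, \qquad h_{ii}(y) \leq \frac{C}{(1-|z_i|^2)^2} \;\; \text{for } i > m.
\]
After shrinking the neighborhood of $x$ so that the coordinates $z_j$ with $j>m$ stay uniformly bounded away from the unit circle, the second bound becomes a pure constant while the first is precisely the Poincaré weight. Substituting back into the diagonal reduction gives $h(\xi,\xi) \leq C' \omega^{(p)}(\xi,\xi)$, and one final application of Cauchy-Schwarz for $h$ produces the asserted estimate \eqref{growthmetric}.

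The main technical point I expect is the curvature computation for the pullback $\phi_i^\ast h$: it rests on the standard fact that the Gaussian curvature of a Hermitian metric restricted to an immersed holomorphic curve equals the ambient holomorphic sectional curvature along that curve's tangent direction, so that Ahlfors-Schwarz applies uniformly in $y$. Once this is granted, the rest of the argument is mechanical and benefits from the product structure of $\omega^{(p)}$ on the polydisk: because the Poincaré metric splits as a direct sum along the coordinate axes, the coordinate-axis embeddings deliver exactly the right weights with no cross interactions, turning the proof into a finite collection of independent one-dimensional estimates.
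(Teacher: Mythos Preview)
Your argument is essentially the paper's own proof: reduce via Cauchy--Schwarz to bounding $h(\partial/\partial z_i,\partial/\partial z_i)$, then apply the Ahlfors--Schwarz lemma along each coordinate disk or punctured disk. One small imprecision: the Gaussian curvature of $\phi_i^\ast h$ is in general \emph{at most} (not equal to) the ambient holomorphic sectional curvature along $\partial/\partial z_i$, by the curvature-decreasing property for holomorphic subbundles; this inequality is exactly what you need, so the argument goes through unchanged.
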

\begin{proof}
Applying Cauchy-Schwarz, we see that it suffices to prove that for any vector field $\eta$, we locally have $\norm{ \eta}_h \leq C \norm{ \eta}_{(p)}$. Moreover, we can clearly suppose $\eta$ constant. Let $\eta = \sum_{j} a_j \Der{z_j}$ be such a constant vector field. Then
$$
\norm{\eta}^2_h \leq n^2 \sum_{j} \norm{ a_j \Der{z_j} }^2_h.
$$
Thus, it suffices to prove the result for $\eta = \Der{z_j}$ for any $j \in \left[|1, n \right|]$. Let $x_0 \in U$, and let $U$ be a neighborhood of $x_0$ on which $\norm{x}_{\infty}$ is bounded by a constant $B$, for any $x \in U$.

If $j \in \left[| 1, m \right|]$, we apply the Ahlfors-Schwarz lemma to the punctured disk passing through $x$ and directed by $\Der{z_j}$ to obtain 
$$
\norm{\Der{z_j}}_h (x) \leq C \frac{1}{ \abs{z_j}^2 \abs{ \log \abs{ z_j}}^2},
$$
on $U$, for some $C$ depending only on $A$. Similarly, if $j \in \left[| m + 1, n \right|]$ we see from \eqref{boundpoincare} that $\norm{\Der{z_j}}_h$ must be bounded from above by
$$
\norm{\Der{z_j}}_h(x) \leq C \frac{1}{(1 - \abs{z}^2)^2} \leq \frac{C}{(1 - B^2)^2},
$$
with $C$ depending only on $A$. This proves the result. 
\end{proof}

\begin{corol} \label{corolbounded} Let $\Delta^n$ and $D \subset \Delta^n$ be as above, and let $h$ be a smooth metric on $T_{\Delta^n \setminus D}$, which we suppose to have negative sectional curvature bounded by $-A$. Then for any vector field $\xi$ of $T_X (-\log D)$, $\norm{\xi}_h$ is bounded in the neighborhood of any point of $D$.
\end{corol}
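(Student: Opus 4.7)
The plan is to deduce the corollary directly from the Poincaré growth estimate \eqref{growthmetric} established in the preceding proposition, by observing that the Poincaré metric is itself bounded when evaluated on local sections of the logarithmic tangent sheaf.

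First, I would use the fact that on $\Delta^n$, the bundle $T_X(-\log D)$ is locally trivialized by the frame
\[
\xi_1 = z_1 \Der{z_1},\; \ldots,\; \xi_m = z_m \Der{z_m},\; \xi_{m+1} = \Der{z_{m+1}},\; \ldots,\; \xi_n = \Der{z_n}.
\]
Writing a section $\xi$ as $\xi = \sum_j a_j(z) \xi_j$ with $a_j$ holomorphic (hence locally bounded), and using the Cauchy-Schwarz inequality together with the triangle inequality, it suffices to show that each $\norm{\xi_j}_h$ is bounded in a neighborhood of an arbitrary point of $D$.

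Next, I would evaluate the Poincaré form $\omega^{(p)}$ on each frame vector. For $1 \leq j \leq m$, one computes
\[
\omega^{(p)}(\xi_j, \xi_j) \;=\; \frac{\abs{z_j}^2}{\abs{z_j}^2\,\abs{\log \abs{z_j}}^2} \;=\; \frac{1}{\abs{\log \abs{z_j}}^2},
\]
which tends to $0$ as $z_j \to 0$ and is therefore bounded on a neighborhood of any point of $D$. For $j > m$, $\omega^{(p)}(\xi_j, \xi_j) = 1$ is trivially bounded. Applying \eqref{growthmetric} with $\xi = \eta = \xi_j$ then yields $\norm{\xi_j}_h^2 \leq C\, \omega^{(p)}(\xi_j, \xi_j)$, which is bounded near $D$. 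Combined with the previous reduction, this gives the claim.

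There is no real obstacle here: the statement is essentially a repackaging of the Poincaré-growth proposition, made possible by the fact that the log-tangent frame vectors $z_j \partial/\partial z_j$ are exactly the ones that absorb the singularity of $\omega^{(p)}$ along $D$.
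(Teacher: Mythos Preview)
Your proposal is correct and follows essentially the same approach as the paper: both apply the Poincar\'e growth estimate \eqref{growthmetric} to the canonical frame $\left( z_j \Der{z_j} \right)_{1 \leq j \leq m}, \left( \Der{z_j} \right)_{m+1 \leq j \leq n}$ of $T_X(-\log D)$ and observe that $\omega^{(p)}$ is bounded on these vectors. You simply make the computation of $\omega^{(p)}(\xi_j,\xi_j)$ and the reduction to frame vectors a bit more explicit.
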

\begin{proof}
It suffices to apply \eqref{growthmetric} on the vectors of the canonical frame $\left( \left(z_j \Der{z_j} \right)_{1 \leq j \leq m}, \left( \Der{z_j} \right)_{m \leq j \leq n} \right)$, and to remark that $\omega^{(p)}$ is bounded on these vectors.
\end{proof}

We now prove that under the first three assumptions of Theorem \ref{thmsingmetricnonK}, $\Omega_{X} (\log D)$ is big. Let $Y = \mathbb P( T_X( - \log D) ) \overset{p}{\longrightarrow} X$ and let $\mathcal O(1)$ be the tautological bundle of this projectivized space.
\begin{lem} The line bundle $\mathcal O(1)$ is pseudo-effective on $Y$. 
\end{lem}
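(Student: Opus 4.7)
The plan is to exhibit an explicit singular Hermitian metric on $\mathcal{O}(1)$ whose curvature current is non-negative. Over the open part $Y\setminus p^{-1}(D)$, we have $T_X(-\log D)|_{X\setminus D} = T_X|_{X\setminus D}$, so the smooth metric $h$ restricts, via the tautological inclusion $\mathcal{O}_Y(-1)\hookrightarrow p^\ast T_X(-\log D)$, to a smooth Hermitian metric on $\mathcal{O}_Y(-1)|_{Y\setminus p^{-1}(D)}$; dualizing yields a smooth metric $h_{\mathcal{O}(1)}$ on $\mathcal{O}(1)$ there. If $v$ denotes the tautological section of $\mathcal{O}_Y(-1)$ viewed in $p^\ast T_X(-\log D)$, the corresponding local weight is $\varphi := \log h(v,v)$, and pseudo-effectivity will follow from extending $\varphi$ to a plurisubharmonic function on all of $Y$.

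First, I would establish $i\partial\bar\partial\varphi\geq 0$ on $Y\setminus p^{-1}(D)$. Fix a point $(x_0,[v_0])$ and a normal frame $(e_1,\ldots,e_n)$ of $(T_X,h)$ at $x_0$ with $e_n=v_0/\|v_0\|$; introduce fiber coordinates $w_1,\ldots,w_{n-1}$ so that $v(z,w) = e_n(z)+\sum_{k<n} w_k\,e_k(z)$. The standard normal-frame expansion
\[
h(e_i,e_j)(z) = \delta_{ij} - \sum_{k,l} R_{i\bar j k\bar l}(x_0)\, z_k\bar z_l + O(|z|^3),
\]
combined with the vanishing of first derivatives of $h_{ij}$ at $x_0$, gives, up to higher order terms,
\[
\log h(v,v) = \log(1+|w|^2) - \sum_{k,l} R_{n\bar n k\bar l}(x_0)\, z_k\bar z_l
\]
at $(x_0,[v_0])$. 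Taking $i\partial\bar\partial$ at that point splits into a strictly positive vertical Fubini--Study piece on $\mathbb{P}(T_{x_0}X)$ and a horizontal piece $-i\sum R_{n\bar n k\bar l}\,dz_k\wedge d\bar z_l$. Hypothesis (2) (non-positive bisectional curvature) is precisely the statement that the Hermitian matrix $(R_{n\bar n k\bar l})$ is negative semi-definite, so the horizontal piece is a non-negative $(1,1)$-form. Hence $\varphi$ is psh on $Y\setminus p^{-1}(D)$.

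The remaining step is to extend $\varphi$ across the analytic divisor $p^{-1}(D)$. By Corollary \ref{corolbounded}, the $h$-norm of any local section of $T_X(-\log D)$ is locally bounded near $D$; applied to the tautological section $v$, this gives that $\varphi = \log h(v,v)$ is locally bounded above near every point of $p^{-1}(D)$. The standard removable singularity theorem for plurisubharmonic functions then extends $\varphi$ (via upper semi-continuous regularization) to a psh function $\tilde\varphi$ on $Y$, which is the local weight of a singular metric on $\mathcal{O}(1)$ with non-negative curvature current. This proves that $\mathcal{O}(1)$ is pseudo-effective.

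The main obstacle is the curvature computation: one has to correctly decompose $i\partial\bar\partial\log h(v,v)$ into its vertical Fubini--Study part and its horizontal bisectional part, and check that the sign convention matches hypothesis (2). Once this is in hand, the passage across $p^{-1}(D)$ is a brief application of pluripotential theory, with Corollary \ref{corolbounded} (a consequence of the Ahlfors--Schwarz bound from hypothesis (1)) supplying the locally-bounded-above property that triggers the removable singularity argument.
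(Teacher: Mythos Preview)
Your argument is correct and follows essentially the same route as the paper: build the induced metric on $\mathcal O(1)$ from $h$, split its curvature over $Y\setminus p^{-1}(D)$ into a non-negative Fubini--Study vertical piece and a horizontal piece controlled by the bisectional curvature (the paper states this as the formula \eqref{curvatureprojectivized} rather than via a normal-frame expansion, but the content is identical), and then extend the psh weight across $p^{-1}(D)$ using the upper bound supplied by Corollary~\ref{corolbounded}.
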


\begin{proof}
Let $\widehat{h}$ be the metric induced by $h$ on the tautological bundle $\mathcal O(-1) \longrightarrow Y$. Remark that $\widehat{h}$ is not defined on $p^{-1} (D)$. Denote by $\widehat{h}^\ast$ the dual of this metric ; it is determined locally by the norm of a non-vanishing section of $\mathcal O(1)$. More specifically, if $(x, \left[ v \right]) \in p^{-1} (D)$, choose a section $\sigma$ of $T_X (- \log D)$, non vanishing around $x$, such that $\sigma(x) = v$. Then $\sigma$ induces a local section $\widehat{\sigma}$ of $\mathcal O(-1)$ around $(x, \left[ v \right])$, whose dual section we will denote by $\widehat{\sigma}$. Locally, the norm of $\widehat{\sigma}^\ast$ is given by
$$
\norm{ \sigma^\ast}_{\widehat{h}^\ast} = \frac{1}{\norm{ \sigma }_{h} },
$$
where $\widehat{\sigma}^\ast$ is the section of $\mathcal O(1)$ dual to $\widehat{\sigma}$. Then, on $p^{-1} (X \setminus D)$, the curvature of $(\mathcal O(1), \widehat{h}^\ast)$ is determined near $(x, [v])$ by
$$
\frac{i}{2} \Theta( \widehat{h}^\ast) \; \overset{loc}{=} \; \frac{i}{2} \D \partial \log \norm{ \widehat{\sigma}^\ast }_{\widehat{h}^\ast} = \frac{i}{2} \partial \D \log \norm{\sigma}_{h}.
$$  
We can develop this expression, to obtain
\begin{equation} \label{curvatureprojectivized}
\frac{i}{2} \Theta( \widehat{h}^\ast) \cdot (\xi, \xi) \overset{loc}{=} - \frac{i}{2} \frac{\left< \sigma, \Theta(h) \cdot (p_\ast \xi, p_\ast \xi) \sigma \right>_h}{\norm{\sigma}^2_h} + \omega^{FS}_h (\xi^{vert}, \xi^{vert}).
\end{equation}
The first term appearing in the right hand side of this equation is equal to $B(\sigma, p_\ast \xi) \norm{p_\ast \xi}_h$, where $B$ is the bisectional curvature of $h$. It is non-negative by our hypothesis. The second term, related to the Fubini-Study metric on the fibers, is also non-negative. This implies that $i \D \partial \log \norm{ \widehat{ \sigma}^\ast}_{\widehat{h}^\ast}^2 \geq 0$, i.e. that $- \log \norm{ \widehat{\sigma}^\ast }_{\widehat{h}^\ast}^2$ is plurisubharmonic on $Y \setminus p^{-1} (D)$. Moreover, $\norm{ \widehat{\sigma} ^\ast}_{\widehat{h}^\ast}^2 = \frac{1}{\norm{\sigma}_{h}^2}$ is locally bounded from below by Corollary \ref{corolbounded}, so $- \log \norm{ \sigma^\ast}_{\widehat{h}^\ast}^2$ is bounded from above. By the usual properties of bounded plurisubharmonic functions, we see that this last function extends uniquely on $p^{-1} (D)$  to a plurisubharmonic function, defined locally on $Y$.

Consequently,  we can write $\widehat{h}^\ast \overset{loc}{=} e^{- \Psi}$, with $\Psi$ plurisubharmonic. This implies in particular that $\widehat{h}^\ast$ is a \emph{singular} metric on $\mathcal O(1)$, with positive curvature in the sense of currents. By \cite{dem92}, this implies in turn that $\mathcal O(1)$ is a pseudo-effective line bundle.
\end{proof}

To conclude, we use the following theorem of Boucksom \cite{bou02}:
\begin{thm} [\cite{bou02}] \label{boucksomthm} Let $L$ be a pseudo-effective line bundle on a compact Kähler manifold $M$ of dimension $n$. Then, for any closed positive current $T \in c_1 (L)$, if we denote by $T_{ac}$ the absolutely continuous part of $T$, the powers $T_{ac}^k$ have bounded mass on $M$.
 
Moreover, the volume of $L$ is equal to
$$
\mathrm{vol}(L) = \max_{T} \int_{M} T^n_{ac},
$$
where $T$ ranges among the positive closed $(1, 1)$-currents representing $c_1(L)$.
\end{thm}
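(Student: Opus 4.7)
The plan is to establish both assertions by combining Demailly's regularization theorem for closed positive currents, the theory of non-pluripolar Monge-Ampère products (Bedford-Taylor, extended by Boucksom-Eyssidieux-Guedj-Zeriahi), and Fujita's approximation theorem relating volumes to nef intersections on modifications.

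For the boundedness of $\int_M T_{ac}^k$, the starting point is Demailly's regularization: given $T \in c_1(L)$ closed positive, one constructs an approximating sequence $T_m \in c_1(L)$ with analytic singularities along subschemes $Z_m \subset M$, satisfying $T_m \geq -\varepsilon_m \omega$ with $\varepsilon_m \to 0$, and whose Lelong numbers converge to those of $T$ from below. Working on a log resolution $\pi_m : \widetilde{M}_m \to M$ of $Z_m$, one writes $\pi_m^\ast T_m = \alpha_m + [E_m]$ with $\alpha_m$ smooth and semi-positive up to the $\varepsilon_m$ error, and $E_m$ an effective exceptional divisor. The smooth form $\alpha_m^k$ then has mass, measured against $\omega^{n-k}$, bounded by the intersection number $(\pi_m^\ast c_1(L))^k \cdot \omega^{n-k}$, independently of $m$. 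A monotonicity argument for non-pluripolar products transfers this uniform bound to $T_{ac}^k$.

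For the volume identity, the inequality $\int_M T_{ac}^n \leq \mathrm{vol}(L)$ for every positive $T$ follows by applying Demailly's holomorphic Morse inequalities to the regularizations $T_m$: on the open locus where $\alpha_m$ is strictly positive, one obtains $h^0(mL) \gtrsim \frac{m^n}{n!} \int \alpha_m^n$ asymptotically, and letting $m \to \infty$ gives the bound. For the reverse inequality, I would invoke Fujita's approximation: there exist modifications $\mu : \widetilde{M} \to M$ with $\mu^\ast L$ numerically decomposing as $A + E$ where $A$ is nef, $E$ is effective, and $A^n$ is as close as desired to $\mathrm{vol}(L)$. Taking a smooth semi-positive representative of $A$, summing with the integration current on $E$, and pushing down by $\mu$ produces a current $T \in c_1(L)$ whose absolutely continuous part satisfies $\int_M T_{ac}^n \geq A^n$, hence arbitrarily close to $\mathrm{vol}(L)$.

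The step I expect to be most delicate is identifying the measure-theoretic absolutely continuous part $T_{ac}$ of the Lebesgue decomposition with the non-pluripolar Monge-Ampère product that the above arguments naturally produce. A priori these two decompositions need not agree once $T$ is genuinely singular, since the Lebesgue decomposition is performed in charts while non-pluripolar products are defined intrinsically through truncations of local potentials. Showing that they coincide requires proving that $T_{ac}$ charges no pluripolar set and that its density is compatible with wedge products in the Bedford-Taylor sense. This is the technical core of \cite{bou02}, and is precisely where the bulk of the estimates must be invested.
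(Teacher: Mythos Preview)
The paper does not prove this theorem at all: it is stated and attributed to Boucksom \cite{bou02}, and then used as a black box in the proof of Theorem~\ref{thmsingmetricnonK} and in Proposition~\ref{integralsubmanifold}. There is therefore no ``paper's own proof'' to compare your proposal against.

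That said, your sketch is a reasonable outline of the ideas behind Boucksom's original argument (Demailly regularization, approximate Zariski decomposition/Fujita approximation, and Morse-type inequalities), and you correctly identify the delicate point as the compatibility between the Lebesgue absolutely continuous part and the intrinsic Monge--Amp\`ere constructions. For the purposes of the present paper, none of this is needed: only the inequality $\mathrm{vol}(L) \geq \int_M T_{ac}^n$ for a single explicit current $T$ is used, together with the fact that $T_{ac}^k$ has bounded mass so that the integrals in question make sense.
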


\begin{proof} [Proof of Theorem \ref{thmsingmetricnonK}] Let $T = \frac{i}{2\pi} \Theta_c( \widehat{h}^\ast )$, where by $\Theta_c$ we mean the curvature in the sense of currents. Since $p^{-1}(D)$ has zero Lebesgue measure, for any $k$, $T_{ac}^k$ is the current of integration against $\left[ \frac{i}{2\pi}\Theta( \widehat{h}^\ast) \right] ^k$ on $Y \setminus p^{-1}(D)$. In particular,
$$
\int_{Y} T^{2n-1}_{ac} = \int_{Y \setminus p^{-1} (D)} \left( \frac{i}{2\pi} \Theta( \widehat{h}^\ast) \right)^{2n - 1}.
$$
Remark that Theorem \ref{boucksomthm} implies that this last integral converges. By \eqref{curvatureprojectivized}, we have
$$
\frac{i}{2} \Theta ( \widehat{h}^\ast )_{(x, [v])} (\xi, \xi) = - \norm{ p_{\ast} \xi}^2 B(v, p_{\ast} \xi) + \omega^{FS}_h (\xi^{vert}, \xi^{vert})  
$$
and since $h$ has non-positive bisectional curvature, the $(2n-1, 2n-1)$-form $\left( \frac{i}{2} \Theta( \widehat{h}^\ast) \right)^{2n - 1}$ is non-negative on $Y \setminus p^{-1} (D)$. Moreover, by our third hypothesis, this form is positive at the point $(x_0, \left[v_0\right]) \in Y \setminus p^{-1} (D)$. 

This means, because of Theorem \ref{boucksomthm}, that
$$
\mathrm{vol}\left( \mathcal O(1) \right) \geq \int_{Y \setminus p^{-1} (D) } \left( \frac{i}{2\pi} \Theta( \widehat{h}^\ast) \right)^{2n -1} > 0.
$$
Thus, $\mathcal O(1)$ has positive volume, hence is big on $Y$. This proves the first assertion of Theorem \ref{thmsingmetric}.

Now, assume that $h$, seen as a metric on $T_X$, is locally bounded near $D$. As before, it follows from our second hypothesis that $h$ induces a metric $\widehat{h}^\ast_0$ on the tautological bundle $\mathcal O(1) \longrightarrow \mathbb P(T_X)$, with positive curvature above $X \setminus D$. If $p_0: \mathbb P(T_X) \longrightarrow X$ is the canonical projection, we see that $\widehat{h}^\ast_0$ can locally be written
$$
\widehat{h}^\ast_0  \overset{loc}{=} e^{- \Psi_0},
$$
with $\Psi_0$ plurisubharmonic on $p^{-1} (X \setminus D)$. Because of our fourth hypothesis, we see that $\Psi_0$ must be bounded from above near any point of $p^{-1} (D)$, and thus, as before, it must extend into a plurisubharmonic function near any such point. This implies that the tautological bundle $\mathcal O(1)$ is pseudo-effective. Applying Theorem \ref{boucksomthm}, we obtain that this line bundle has positive volume. This ends the proof.
\end{proof}

We can now give our metric proof of Theorem \ref{bigthm}. If $\Omega$ is a bounded symmetric domain, its Bergman metric $h_{\Omega}$ is a Kähler metric satisfying the first two hypotheses of Theorem \ref{thmsingmetric}. Therefore, for any quotient $X$ of $\Omega$ by a subgroup $\Gamma \subset \mathrm{Aut} (\Omega)$, the metric $h_{X}$ induced on $X$ by $h_{\Omega}$ satisfies those same hypotheses. If $\overline{X} = X \sqcup D$ is any smooth compactification of $X$, with $D$ a divisor with simple normal crossings, Theorem \ref{thmsingmetric} implies that $\Omega_{\overline{X}}(\log D)$ is big. This proves Theorem \ref{bigthm}. \\

We finish this section by  a result which will be central in our study of the nefness of the cotangent bundles of a toroidal compactification.

\begin{prop} \label{integralsubmanifold} Let $(X, D)$ be a pair satisfying the hypotheses 1 and 2 of Theorem  \ref{thmsingmetric}. Let $Y = \mathbb P(T_X (- \log D))$, with its canonical projection $p$ onto $X$. Let $f : V \longrightarrow Y$ a generically finite morphism from a smooth complex manifold onto a subvariety $f(V) \subset Y$, not included in $p^{-1} (D)$. Then $f^\ast \widehat{h}^\ast$ induces a singular metric on $\mathcal O(1)$, and
$$
\mathrm{vol} \left( f^\ast \mathcal O(1) \right) \geq \int_{f^{-1}  \left(Y \setminus p^{-1} (D) \right) \cap V_S  } \left[ \frac{i}{2 \pi} f^\ast \Theta( \widehat{h}^\ast) \right]^{\dim V},
$$
where $V_S$ is the locus of points where $f$ is immersive.
\end{prop}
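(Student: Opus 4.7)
The plan is to mimic the final steps of the proof of Theorem \ref{thmsingmetricnonK}, but on the source $V$ of the generically finite map $f$: first I would show that the singular metric $\widehat{h}^\ast$ on $\mathcal{O}(1) \to Y$ pulls back to a bona fide singular metric on $f^\ast \mathcal{O}(1)$ with positive curvature current, then apply Boucksom's theorem \ref{boucksomthm} to that pseudo-effective line bundle, and finally identify the absolutely continuous part of the resulting curvature current on the immersive locus lying outside $p^{-1}(D)$.

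For the first step, recall from the proof of Theorem \ref{thmsingmetricnonK} that the local weights of $\widehat{h}^\ast$ are plurisubharmonic on $p^{-1}(X \setminus D)$ (by \eqref{curvatureprojectivized} together with hypotheses 1 and 2) and are locally bounded near $p^{-1}(D)$ (by Corollary \ref{corolbounded}). Since $f(V) \not\subset p^{-1}(D)$, the holomorphic pullback preserves both plurisubharmonicity and local boundedness, so $f^\ast \widehat{h}^\ast$ is a singular metric on $f^\ast \mathcal{O}(1)$ with positive curvature current $T := \frac{i}{2\pi}\,\Theta_c(f^\ast \widehat{h}^\ast)$, and in particular $f^\ast \mathcal{O}(1)$ is pseudo-effective. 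Theorem \ref{boucksomthm} then yields $\mathrm{vol}(f^\ast \mathcal{O}(1)) \geq \int_V T_{ac}^{\dim V}$.

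It remains to bound this integral from below by the one asserted in the statement. On the open set $U := f^{-1}\!\left( Y \setminus p^{-1}(D) \right) \cap V_S$, the ambient metric $\widehat{h}^\ast$ is smooth and $f|_U$ is a holomorphic immersion, so $f^\ast \widehat{h}^\ast$ is smooth on $U$ and $T|_U$ coincides with the smooth closed $(1,1)$-form $\frac{i}{2\pi}\, f^\ast \Theta(\widehat{h}^\ast)$; in particular $T_{ac}=T$ on $U$. Formula \eqref{curvatureprojectivized} together with the non-positivity of the bisectional curvature of $h$ shows that $\frac{i}{2\pi}\,\Theta(\widehat{h}^\ast)$ is a semi-positive $(1,1)$-form on $p^{-1}(X \setminus D)$, so its pullback by $f$ is semi-positive on $U$, and therefore its $(\dim V)$-th power is a non-negative top-degree form on $U$. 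Since $T_{ac}^{\dim V}$ is a non-negative measure on $V$, dropping the contribution from $V \setminus U$ yields
\[
\mathrm{vol}(f^\ast \mathcal{O}(1)) \;\geq\; \int_V T_{ac}^{\dim V} \;\geq\; \int_U \left[ \frac{i}{2 \pi}\, f^\ast \Theta(\widehat{h}^\ast) \right]^{\dim V},
\]
which is the desired inequality.

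The main subtle point is the identification $T_{ac} = T$ on $U$: it relies on the fact that the curvature current of a singular metric with smooth local weight has no singular part on the region where the weight is smooth, combined with the stability of smoothness of $(1,1)$-forms under pullback by a holomorphic immersion. Everything else is a routine transcription of the positivity arguments from the proof of Theorem \ref{thmsingmetricnonK} to the pulled-back setting on $V$.
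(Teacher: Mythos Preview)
Your proposal is correct and follows essentially the same approach as the paper: pull back the psh local weights of $\widehat{h}^\ast$ by $f$, use $f(V)\not\subset p^{-1}(D)$ to ensure the pulled-back weight is not identically $-\infty$, apply Boucksom's theorem, and identify $T_{ac}$ with the smooth form $\frac{i}{2\pi}f^\ast\Theta(\widehat{h}^\ast)$ on the open set $U$. The only cosmetic difference is that the paper concludes by noting that the complement of $U$ has Lebesgue measure zero (so $T_{ac}$ equals the smooth pullback a.e.), whereas you instead invoke the non-negativity of $T_{ac}^{\dim V}$ to drop the contribution of $V\setminus U$; both are equivalent here.
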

\begin{proof}
We saw in the proof of Theorem \ref{thmsingmetricnonK} that we can locally write $\widehat{h}^\ast \overset{loc}{=} e^{- \Psi}$, with $\Psi$ plurisubharmonic and nowhere equal to $- \infty$ on $Y \setminus p^{-1} (D)$. Consequently, we can write
$$
f^\ast \widehat{h}^\ast \overset{loc}{=} e^{- \Psi \circ f},
$$
with $\Psi \circ f$ plurisubharmonic, and nowhere equal to $-\infty$ outside $f^{-1} \left( p^{-1} (D) \right)$. Since $f (V)$ is not included in $p^{-1} (D)$, this implies that $\Psi \circ f \in \mathrm{Psh} \cap L^1_{loc}$, hence that $f^\ast \widehat{h}^\ast$ induces a singular metric on $f^\ast \mathcal O(1)$, with positive curvature. Therefore, the line bundle $f^\ast \mathcal O(1)$ is pseudo-effective, and we can estimate its volume using Theorem \ref{boucksomthm}. Since $V_S \cup f^{-1} ( p^{-1} (D))$ has zero Lebesgue measure, the absolutely continuous part of $\Theta_{c} ( f^\ast \widehat{h}^\ast)$ is equal to $f^\ast \Theta( \widehat{h}^\ast)$ almost everywhere, which gives the result. 
\end{proof}

\subsection{Bigness of the standard cotangent bundle of a compactification of a ball quotient} \label{sectbigstdcot}

In this section, we prove Theorem \ref{thmsubvar}. We start by recalling some results of \cite{baktsi15}. Let us resume the notations and conventions introduced in Section \ref{sectioncompact}.

\begin{prop} [\cite{baktsi15}] \label{betabound} Let $X'$ be a quotient of $\mathbb B^n$, with $n \geq 2$, and let $X \longrightarrow X'$ be an étale cover, ramifying at order $l$ on the boundary.
Then, for any $\beta > 0$ such that 
\begin{enumerate}
\item $\beta \leq l$ if $n \in \left[| 4, 5 \right|]$ ;
\item $\beta \leq \frac{n+1}{2 \pi} l $ otherwise,
\end{enumerate}
the divisor $K_{\overline{X}} + (1 - \beta) D$ is nef and big.
\end{prop}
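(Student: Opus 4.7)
The plan is to reduce the statement to positivity properties of natural line bundles on the base $\overline{X'}$, which are the main ingredients supplied by \cite{baktsi15} (together with the nefness result of \cite{dcdc15b}). Since $\pi\colon\overline{X}\to\overline{X'}$ ramifies to order $l$ along the boundary, one has $\pi^{\ast}D' = lD$, and Riemann--Hurwitz yields $K_{\overline{X}} = \pi^{\ast} K_{\overline{X'}} + (l-1)D$. Consequently the class under study admits the two equivalent decompositions
\[
K_{\overline{X}} + (1-\beta)D \;=\; \pi^{\ast} K_{\overline{X'}} + (l-\beta)D \;=\; \pi^{\ast}(K_{\overline{X'}} + D') - \beta D,
\]
and the proof will alternate between them depending on which test cycles are considered.

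For nefness I would separate curves $C\subset\overline{X}$ into those contained in some boundary component $D_i$ and those meeting $D$ properly. If $C\subset D_i$, adjunction together with $K_{D_i}\equiv 0$ (the $D_i$ are abelian varieties) gives $(K_{\overline{X}} + D)|_{D_i}\equiv 0$, so $(K_{\overline{X}} + (1-\beta)D)|_{D_i}\equiv -\beta\,D|_{D_i}$, which is ample for every $\beta>0$ because $-D|_{D_i}$ is a polarization coming from Mumford's construction. For $C$ meeting $D$ properly, I would combine the second decomposition with the fact that $K_{\overline{X'}} + D'$ is nef (good Bergman metric), and estimate the remaining term $\beta\,(C\cdot D)$ using the identity $l(C\cdot D) = C\cdot\pi^{\ast}D'$.

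For bigness I would expand
\[
(K_{\overline{X}} + (1-\beta)D)^n \;=\; \sum_{k=0}^{n} \binom{n}{k}(-\beta)^k\, (K_{\overline{X}}+D)^{n-k}\cdot D^k,
\]
and treat each term separately. Proposition \ref{propintbergman} identifies the leading piece $(K_{\overline{X}}+D)^n$ with the Bergman volume of $X$ up to the constant $(n+1)^n/\pi^n$. For $k\ge 1$, adjunction and $K_{D_i}\equiv 0$ reduce the mixed intersection $(K_{\overline{X}}+D)^{n-k}\cdot D^k$ to a top-dimensional product of the polarizations $-D|_{D_i}$ on the boundary abelian varieties, and Hirzebruch--Mumford proportionality expresses this in terms of the same transcendental constant. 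The positivity of the resulting polynomial in $\beta$ should then yield the universal bound $\beta\le (n+1)l/(2\pi)$, while the sharper bound $\beta\le l$ in dimensions $4$ and $5$ should come from an additional algebraic cancellation of the negative boundary terms, an argument that is strictly better than the transcendental one only when $(n+1)/(2\pi)<1$, i.e.\ precisely in those two dimensions.

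The main obstacle will be the exact numerical bookkeeping: keeping track of each boundary intersection $(K_{\overline{X}}+D)^{n-k}\cdot D^k$ with the correct sign and coefficient, and matching the final expression against Hirzebruch--Mumford proportionality to obtain sharp constants. The dichotomy in the statement reflects the transition between the regime in which the Bergman/transcendental estimate is optimal and the narrow window $n\in\{4,5\}$ in which an algebraic refinement improves it; pinning this down is essentially the content of the detailed cusp-by-cusp estimates of \cite{baktsi15}.
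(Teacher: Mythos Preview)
The paper does not prove this proposition; it is quoted verbatim from \cite{baktsi15} and used as a black box (the next line in the paper simply applies the base-point-free theorem to it). So there is no ``paper's own proof'' to compare against.

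As for the sketch you propose, it is broadly in the spirit of \cite{baktsi15}, but it misses the one-line reduction that makes the statement almost immediate and it misdiagnoses the dichotomy. Since $\pi^\ast D'=lD$ and $K_{\overline X}=\pi^\ast K_{\overline{X'}}+(l-1)D$, one has the exact pullback identity
\[
K_{\overline X}+(1-\beta)D \;=\; \pi^\ast\!\left(K_{\overline{X'}}+\Bigl(1-\tfrac{\beta}{l}\Bigr)D'\right),
\]
so nefness and bigness on $\overline{X}$ reduce \emph{exactly} to nefness and bigness of $K_{\overline{X'}}+(1-\lambda)D'$ on the base with $\lambda=\beta/l$. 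The bound $\beta\le \tfrac{n+1}{2\pi}\,l$ then follows directly from the Bakker--Tsimerman result that $K_{\overline{X'}}+(1-\lambda)D'$ is nef and big for $\lambda\le \tfrac{n+1}{2\pi}$; no expansion of $(K_{\overline X}+(1-\beta)D)^n$ or term-by-term Hirzebruch--Mumford bookkeeping is needed.

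Your explanation of why the range $n\in\{4,5\}$ is singled out is incorrect. The inequality $(n+1)/(2\pi)<1$ holds for all $n\le 5$, not just $n=4,5$; what is special about $n\ge 4$ is that \cite{baktsi15} proves $K_{\overline{X'}}$ itself is nef and big (nefness for $n\ge 3$ is \cite{dcdc15b}, bigness for $n\ge 4$ is \cite{baktsi15}). Via the pullback identity above, this is precisely the statement that $\beta=l$ (i.e.\ $\lambda=1$) is allowed. For $n=2,3$ one does not know $K_{\overline{X'}}$ is big, so one must fall back on the universal $\tfrac{n+1}{2\pi}$ bound; for $n\ge 6$ that universal bound already exceeds $1$ and hence dominates. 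There is no ``additional algebraic cancellation'' at play.
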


Using this proposition, we can immediately apply the base-point free theorem (see \cite{KM98}), to obtain the following lemma. 

\begin{lem} \label{lembasepoint}
With the same hypotheses as in Proposition \ref{betabound}, assume that $\beta$ is a rational number satisfying $\beta < l$ if $n \in \left[|4, 5 \right|]$, and $\beta < \frac{n+1}{2 \pi} l$ otherwise. Then, for any $m \in \mathbb N^\ast$ large enough, the divisor $m \left[ K_{\overline{X}} + (1 - \beta) D \right]$ is base-point free.
\end{lem}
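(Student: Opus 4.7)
The plan is to apply the log base-point free theorem of Kawamata-Shokurov (see for instance Theorem 3.3 in \cite{KM98}) to the $\mathbb{Q}$-Cartier divisor $L := K_{\overline{X}} + (1-\beta)D$. Two ingredients are needed to feed into that theorem: that $L$ is nef and big, and that there is a klt pair structure on $\overline{X}$ against which $L$ can be tested.

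First I would observe that the strict inequality on $\beta$ places it inside the admissible range of Proposition \ref{betabound}, so that proposition delivers for free that $L$ is nef and big. Next, I would choose as klt pair the pair $(\overline{X}, (1-\beta)D)$ itself: the boundary $D$ has simple normal crossings by the toroidal construction recalled in Section \ref{sectioncompact}, and the coefficient $1-\beta$ lies in $[0,1)$ provided $\beta \in (0,1]$ (the main case; for $\beta > 1$ one argues similarly after a small effective perturbation of the boundary, or by invoking a sufficiently general form of the base-point free theorem). With this choice, for any rational $a > 1$ we have
\[
a L - (K_{\overline{X}} + (1-\beta)D) \; = \; (a-1) L,
\]
which is nef and big. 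The log base-point free theorem then yields that $|mL|$ is base-point free for every sufficiently large integer $m$ such that $mL$ is Cartier; since $\beta$ is rational, these $m$ form an arithmetic progression, which gives the statement of the lemma.

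I do not foresee any substantial obstacle. The conceptual content is entirely carried by Proposition \ref{betabound}, and the rest is a direct invocation of the minimal model machinery. The only bookkeeping is the restriction of $m$ to multiples of the Cartier index of $L$, which is automatic from the rationality of $\beta$.
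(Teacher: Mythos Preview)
Your approach is precisely the paper's: it simply cites the base-point free theorem from \cite{KM98} after Proposition~\ref{betabound}. One point worth making explicit: in the application immediately following the lemma, $\beta$ is chosen larger than $n+1$, so $\beta > 1$ is the case of interest and your primary pair $(\overline{X}, (1-\beta)D)$ has non-effective boundary, hence is not klt in the usual sense. The clean fix (implicit in your parenthetical) is to work with the smooth pair $(\overline{X}, 0)$ and note that for $a$ large, $aL - K_{\overline{X}} = (a-1)\bigl(K_{\overline{X}} + (1-\gamma)D\bigr)$ with $\gamma = (a\beta-1)/(a-1) \to \beta$ as $a \to \infty$; the \emph{strict} inequality on $\beta$ then leaves room to apply Proposition~\ref{betabound} to this divisor and conclude it is nef and big.
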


From now on, we will assume that $X$ and $X'$ are as in Theorem \ref{thmsubvar}. Then, $ l > n + 1$ if $l = 4$, and $l > 2 \pi$ in the other cases, so it is possible to find a rational number $\beta$ such that
$$
\beta \in \left] n+1 , \max \left( l, \; \frac{n+1}{2 \pi} l \right) \right[.
$$
In that case, because of Lemma \ref{lembasepoint}, we can write $\beta = \frac{p}{q}$, with $p,q$ large enough so that $L = q (K_{\overline{X}} + D) - p D$ is base-point free.

Consider a subvariety $V$ of $\overline{X}$, not included in $D$. Because of the base-point freeness of $L$,  there exists a section $s \in H^0 \left(\overline{X}, p (K_{\overline{X}} + D ) - q D \right)$, which does not vanish identically on $V$.

Since $\frac{p}{q} > n + 1$, we have $\frac{1}{p} < \frac{1}{(n+1)q}$, so we can choose a real number $\alpha \in \left] \frac{1}{p} , \frac{1}{(n+1) q} \right[$. Let $g$ be the metric induced by $h_{Berg}$ on the line bundle $\mathcal O\left(q \left( K_{\overline{X}} + D\right) \right)$, and let
$$
\phi = \norm{ s }^{2 \alpha}_g.
$$

We can see from \cite[Proposition 1]{mok12}, or from Proposition \ref{smoothframe}, that near the boundary, the metric $g$ is bounded in the local canonical frame $\left( d w_1 \wedge d w_2 \wedge ... \wedge \frac{d w_n}{w_n} \right)^{\otimes q}$ of $\mathcal O( q(K_{\overline{X}} + D))$ as
\begin{equation} \label{boundg}
\norm{\left( d w_1 \wedge ... \wedge \frac{d w_n}{w_n} \right)^{\otimes q}}^2_g \leq C \; \abs{\log \abs {w_n}}^{q(n+1)}.
\end{equation}

Consider the singular metric $\widetilde{h}$ defined on $T_{\overline{X}}$ by $\widetilde{h} = \phi \; h_{Berg}$, and let $h_V$ be its restriction to $T_V$ (at the points where it is defined).

\begin{lem} \label{negcurvature}
On $X \setminus s^{-1} (0)$, $\widetilde{h}$ has negative holomorphic sectional curvature, bounded by a constant $-A$, and negative bisectional curvature. 
\end{lem}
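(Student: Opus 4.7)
The plan is to compute the Chern curvature of $\widetilde{h} = \phi \cdot h_{Berg}$ and to show that the conformal twist by $\phi$ does not destroy the negativity of the holomorphic sectional and bisectional curvatures of $h_{Berg}$, provided $\alpha$ is small enough. The key identity is the conformal-rescaling formula for a Hermitian metric on a vector bundle:
$$
\Theta(\phi h) \; = \; \Theta(h) - (\partial\overline{\partial}\log\phi)\otimes \mathrm{Id}_{T_X}.
$$
Applied to $h = h_{Berg}$, this yields, for $0 \neq v, w \in T_x X$,
$$
H_{\widetilde h}(v) \; = \; \frac{1}{\phi}\biggl[\,H_{h_{Berg}}(v) - \frac{\partial\overline{\partial}\log\phi(v,\bar v)}{\|v\|_{h_{Berg}}^{2}}\,\biggr], \quad B_{\widetilde h}(v,w) \; = \; \frac{1}{\phi}\biggl[\,B_{h_{Berg}}(v, w) - \frac{\partial\overline{\partial}\log\phi(v,\bar v)}{\|v\|_{h_{Berg}}^{2}}\,\biggr].
$$

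The first concrete step is to evaluate $\partial\overline{\partial}\log\phi$ on $X \setminus s^{-1}(0)$. Away from the zero locus of $s$, Poincar\'e--Lelong gives $\partial\overline{\partial}\log\|s\|_g^2 = -\Theta(g)$, and since $g$ is the $q$-th tensor power of $\det h_{Berg}^{*}$, the normalization $\frac{i}{2\pi}\Theta(\det h_{Berg}^{*}) = \frac{n+1}{\pi}\omega_{Berg}$ recorded in Proposition \ref{propintbergman} translates into
$$
\partial\overline{\partial}\log\phi(v,\bar v) \; = \; -2\alpha(n+1)q\, \|v\|_{h_{Berg}}^{2}.
$$

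Plugging this into the HSC formula with $H_{h_{Berg}}\equiv -4$ yields
$$
H_{\widetilde h}(v) \; = \; \frac{-4 + 2\alpha(n+1)q}{\phi}.
$$
Since by construction $\alpha < \tfrac{1}{(n+1)q}$, the numerator is strictly less than $-2$, hence $H_{\widetilde h}(v) \leq -2/\phi$. It remains to bound $\phi$ from above on $X$: combining \eqref{boundg} with the fact that $s$ vanishes to order $\geq p \geq 1$ along $D$, one checks that $\|s\|_g^{2} \to 0$ as $w \to D$, so $\phi$ extends continuously by $0$ to the compact $\overline X$ and is bounded above by some $M > 0$. This yields the uniform bound $H_{\widetilde h}(v) \leq -2/M =: -A < 0$. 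The same calculation for the bisectional curvature gives
$$
B_{\widetilde h}(v, w) \; = \; \frac{B_{h_{Berg}}(v,w) + 2\alpha(n+1)q}{\phi},
$$
and since the Bergman metric on $\mathbb B^n$ with HSC $\equiv -4$ has bisectional curvature bounded above by $-2$ (the sup being attained when $v \perp w$), the numerator is strictly less than $-2 + 2 = 0$, so $B_{\widetilde h}(v, w) < 0$.

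The only real subtlety is to keep the signs and normalization constants consistent with Proposition \ref{propintbergman}; once this is pinned down, the interval $\alpha \in \bigl]\tfrac{1}{p}, \tfrac{1}{(n+1)q}\bigr[$ has been chosen exactly so that $2\alpha(n+1)q$ is strictly smaller than both the magnitude $4$ of the HSC of $h_{Berg}$ and the upper bound $2$ of its bisectional curvature, which is what makes the rescaled curvatures remain strictly negative.
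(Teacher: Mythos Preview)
Your argument is correct and follows essentially the same route as the paper: both compute the curvature of $\widetilde h=\phi\,h_{Berg}$ via the conformal formula $\Theta(\phi h)=\Theta(h)+\bar\partial\partial\log\phi\otimes I$, identify $\frac{i}{2}\bar\partial\partial\log\phi$ with $q\alpha(n+1)\,\omega_{Berg}$ using $\mathrm{Ric}(h_{Berg})=-2(n+1)\omega_{Berg}$, and conclude from $q\alpha(n+1)<1$. The paper phrases the last step as a local computation at the origin of $\mathbb B^n$, writing $\frac{i}{2}\Theta(h_{Berg})_0=-\omega_{Berg}I_n+\frac{i}{2}{}^t\overline T\wedge T$ and observing that the shifted diagonal term $(q\alpha(n+1)-1)\omega_{Berg}I_n$ is still negative while the rank-one term stays non-positive; you instead invoke directly that $H_{h_{Berg}}\equiv-4$ and $B_{h_{Berg}}\le-2$, which is the same information.

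One point where you actually go a bit further than the paper: you justify the \emph{uniform} upper bound $-A$ on the holomorphic sectional curvature by showing that $\phi=\|s\|_g^{2\alpha}$ extends continuously by~$0$ across $D\cup s^{-1}(0)$ and is therefore bounded on the compact $\overline X$. The paper simply says ``an easy calculation gives the result'' without making this step explicit, so your version is the more complete one here.
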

\begin{proof}
Locally on $X \setminus s^{-1} (0)$, we can write
\begin{equation} \label{curvhwidetilde}
\frac{i}{2} \Theta (\widetilde{h}) \; \overset{loc}{=} \;  \frac{i}{2} \D \partial \log \phi \otimes I_n + \frac{i}{2} \Theta(h),
\end{equation}
so, $s | _{X \setminus s^{-1}(0)}$ being a non-vanishing section of the line bundle $\mathcal O(q(K_{\overline{X}} + D))$, we have
\begin{align*}
\frac{i}{2} \D \partial \log \phi & =   \frac{i}{2} \;  \alpha \; \D \partial \log \norm{s}^2_g \\
                                  & = \frac{i}{2} \alpha \; q \; \Theta_{K_{\overline{X} + D}}\\
				 & = - \frac{q  \alpha}{2}\;  \mathrm{Ric}(h_{Berg}) \\
	                         & = q  \alpha  (n+1) \; \omega_{Berg}
\end{align*}

To study the negativity of \eqref{curvhwidetilde}, we can reason locally, in the neighborhood of a point of $X$ corresponding to $0 \in \mathbb B^n$, where $\omega_{Berg}$ admits the expression \eqref{bergmandef}. Then, we can write $\frac{i}{2} \Theta(h_{Berg})$ matricially as
$$
\frac{i}{2} \Theta(h_{Berg})_0 = - \omega_{Berg} \; {I_n} +\frac{i}{2} {}^t \overline{T} \wedge  T,
$$
with $T = \left( dz_1 ... dz_n \right)$. Since $q \alpha (n + 1) < 1$, an easy calculation gives the result.
\end{proof}

Let $V_1 \overset{f_1}{\longrightarrow} V$ be any resolution of $V$. If we let $Z = V_{sing} \cup D \cup s^{-1} (0)$, it is possible to find a resolution $\widetilde{V} \overset{f}{\longrightarrow} V$, dominating $f_1$,  such that the reduced divisor $f^{-1} (Z)_{red}$ has simple normal crossings. Since the sectional and bisectional holomorphic curvatures decrease on submanifolds, we see from Lemma \ref{negcurvature} that $h_V$ has bounded negative sectional curvature and negative bisectional curvature on $\widetilde{V} \setminus f^{-1} (Z)$.

\begin{lem} \label{lembounded} For any $x \in \widetilde{V}$, for any local vector field $\xi$ of $T_{\widetilde{V}}$ defined on a neighborhood of $x$, $\norm{\xi}_{h_V}$ is bounded in a neighborhood of $x$.
\end{lem}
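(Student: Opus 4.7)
The statement is local on $\widetilde V$. If $f(x) \notin D$, then $h_{Berg}$ is smooth on $T_{\overline X}$ near $f(x)$ and $\phi = \norm{s}_g^{2\alpha}$ is continuous (it vanishes on $s^{-1}(0)$, since $\alpha > 0$), so $h_V$ is bounded near $x$. We may therefore assume $f(x) \in D$.

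Choose coordinates $(y_1,\ldots,y_d)$ on $\widetilde V$ around $x$ adapted to the SNC boundary, so that $f^{-1}(D)_{\mathrm{red}} = \{y_1\cdots y_r = 0\}$, and cuspidal coordinates $(w_1,\ldots,w_n)$ on $\overline X$ around $f(x)$ with $D = \{w_n = 0\}$ and $w_n \circ f = y_1^{a_1}\cdots y_r^{a_r}\, u$, $u$ a holomorphic unit. The key point is that in the smooth logarithmic frame $(e_j)$ introduced in Section \ref{localcoordsect}, $h_{Berg}$ is diagonalized by Proposition \ref{smoothframe} with entries $l(w)^{-1},\ldots,l(w)^{-1},l(w)^{-2}$ and $l(w) \sim \abs{\log\abs{w_n}}$ near $D$. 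Using $\Der{w_j} = e_j + \overline{w_j}\,e_n$ for $j<n$ and $\Der{w_n} = \frac{\tau}{4\pi w_n}\,e_n$, I expand $f_*\xi$ in the $(e_j)$-basis. The coefficients of $e_j$ for $j<n$ are $\xi(w_j\circ f)$, which are locally bounded, so their contributions $\abs{\xi(w_j\circ f)}^2/l(w)$ to $\norm{f_*\xi}_{h_{Berg}}^2$ even tend to $0$ near $D$.

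The only potentially divergent term is the $e_n$-coefficient, whose dominant part is $\frac{\tau}{4\pi}\xi(\log(w_n\circ f)) = \frac{\tau}{4\pi}\bigl(\sum_i a_i \xi(\log y_i) + \xi(\log u)\bigr)$, which has simple poles along each $\{y_i=0\}$. After dividing by $l(w)^2 \sim \bigl(\sum_i a_i \log\abs{y_i}\bigr)^2$, this contributes a Poincaré-type term of order $1/(\abs{y_i}^2 \abs{\log\abs{y_i}}^2)$, which is unbounded. The weight $\phi$ absorbs this: since $s \in H^0(\overline X, q(K_{\overline X}+D) - pD)$ vanishes to order at least $p$ along $D$, writing $s = w_n^p\tilde s$ and using \eqref{boundg} gives $\phi \leq C \abs{w_n}^{2p\alpha} \abs{\log\abs{w_n}}^{\alpha q(n+1)}$, so $\phi\circ f$ carries a factor $\prod_i \abs{y_i}^{2 a_i p \alpha}$ up to logarithmic terms.

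Because $\alpha > 1/p$ and $a_i \geq 1$, every exponent $2a_i p\alpha - 2$ is strictly positive, so the vanishing of $\phi\circ f$ on each branch $\{y_i=0\}$ dominates both the simple pole in the $e_n$-coefficient and the residual log-power factors. Hence $\phi \cdot \norm{f_*\xi}_{h_{Berg}}^2 = \norm{\xi}_{h_V}^2$ is bounded (and in fact tends to $0$) near $x$. The main obstacle, aside from bookkeeping, is checking that the single condition $\alpha > 1/p$, introduced so that $\phi$ vanishes to order strictly greater than $2$ along $D$, uniformly beats the Poincaré singularity along every branch of $f^{-1}(D)_{\mathrm{red}}$ independently of the multiplicities $a_i$; this is automatic because $a_i \geq 1$ gives the safety margin $2a_i p\alpha - 2 \geq 2p\alpha - 2 > 0$.
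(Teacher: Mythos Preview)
Your argument is correct and follows the same strategy as the paper: bound $\norm{f_\ast \xi}^2_{h_{Berg}}$ by a Poincar\'e-type singularity in $w_n$, then show that $\phi$ vanishes to high enough order along $D$ (exponent $2p\alpha > 2$) to absorb it, with the logarithmic factor controlled by $\alpha q(n+1) < 1$.

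The only noteworthy difference is in how the Poincar\'e bound on $h_{Berg}$ is obtained. You compute it directly from the explicit diagonal form of $h_{Berg}$ in the frame $(e_j)$ of Proposition~\ref{smoothframe}, and then pull everything back to SNC coordinates $(y_1,\dots,y_d)$ on $\widetilde V$, keeping track of the multiplicities $a_i$. The paper instead invokes the general Poincar\'e growth property established earlier via the Ahlfors--Schwarz lemma (the proposition preceding Corollary~\ref{corolbounded}), which immediately gives $\norm{f_\ast \xi}^2_{h_{Berg}} \leq C/(\abs{w_n}^2 \abs{\log\abs{w_n}}^2)$ without any frame computation, and then works entirely in the target variable $w_n$ without ever introducing coordinates on $\widetilde V$. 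This makes the paper's proof a few lines long and independent of the specific form of the Bergman metric: any K\"ahler metric on $X\setminus D$ with bounded negative holomorphic sectional curvature would do. Your route is more explicit and self-contained but specific to the Bergman metric; the extra bookkeeping with the $a_i$ is unnecessary since the final bound depends only on $w_n \circ f$.
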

\begin{proof}
If $x \notin f^{-1} (D)$, $h_{Berg}$, considered as a metric on $T_{\overline{X}'}$, is bounded in a neighborhood of $f(x)$, so the result is clear.

If $x \in f^{-1} (D)$, $h_{Berg}$ having Poincaré growth with respect to $D$, we can write for any $p$ near $x$ :
$$
\norm{ f_\ast (\xi) }^2_{Berg} (f(p)) \leq \frac{C} { \abs{w_n}^2 \abs{ \log \abs{w_n}}^2},
$$
where $w_n$ is some local coordinate around $f(x)$, defining $D$. Thus,
\begin{align*}
\norm{\xi}^2_{h_V} & = \phi \;  \norm{f_{\ast} (\xi) }^2\\
		         & \leq C \frac{\norm{s}_g^{2\alpha}}{\abs{w_n}^2 \abs{\log \abs{w_n}}^2 }.
\end{align*}
Since $s$, seen as a section of $\mathcal O( q(K_{\overline{X}} + D))$, vanishes at order $p$ on $D$, this last function is bounded by $\frac{ \abs{w_n}^{2 p \alpha}} { \abs{w_n}^2 \abs{ \log \abs{w_n}}^{2 - 2 (n+1)  q  \alpha}}$, because of \eqref{boundg}. Since $p \alpha > 1$, this gives the result.
\end{proof}

The proof of Theorem \ref{thmsubvar} is now straightforward.

\begin{proof}[Proof of Theorem \ref{thmsubvar}]
Because of Lemma \ref{negcurvature} and Lemma \ref{lembounded}, the metric $h_V$ satisfies all four hypotheses of Theorem \ref{thmsingmetricnonK} on $\widetilde{V}$. Therefore, $\Omega_{\widetilde{V}}$ is big. Since the morphism $\widetilde{V} \longrightarrow V_1$ is proper and birational, it follows that $\Omega_{V_1}$ is big, which ends the proof.
\end{proof}

\begin{rem} There are many other possible choices of singular metrics which could satisfy the hypotheses of Theorem \ref{thmsingmetric}. Let us mention another possible one, in the spirit of \cite{baktsi15}. As explained in Section \ref{localcoordsect} and in \cite{mok12}, each component of $T_b$ of the boundary admits a tubular neighborhood $\Omega_b^{(N)}$, for $N$ large enough, on which $\omega_{Berg}$ is given by the potential $l(w)= - \frac{4 \pi}{\tau_b} \log \norm{w}^2$, i.e. $\omega_{Berg} = \frac{i}{2} \overline{\partial} \partial \log l$.

We define a metric on $T_X$ by $\widetilde{h} = e^{- \chi(l)} h_{Berg}$ on $\Omega_{b}^{(N_b)}$, where $\chi : \mathbb R \longrightarrow \mathbb R$ is a smooth function such that $t \mapsto \chi(t) + \log t$ approximates $t \mapsto \log(t)$ on $]0, N_b]$ and the tangent line to $t \mapsto  \log(t)$ at $N_b$ on $]N_b, + \infty[$.

Now, $\widetilde{h}$ equals $h$ outside $\Omega_b^{(N)}$, and since $t \mapsto  - \left( \log(t) + \chi(t) \right)$ is convex, we see that
$$
\omega_{Berg} + \frac{i}{2} \partial \overline{\partial} \chi(l) = - \frac{i}{2} \partial \overline{\partial} ( \log l + \chi(l) ) \geq 0.
$$
Thus, the bisectional curvature of $h_{Berg}$ being larger or equal to $-4$, we conclude, e.g. by applying \eqref{curvatureprojectivized}, that the holomorphic sectional curvature of the metric $\widehat{h}$, induced by $\widetilde{h}$ on $\mathcal O(1) \longrightarrow \mathbb P(T_X)$, is non-negative. Now, if $p = (x, \left[v\right])$ is a point of $\mathbb P(T_{\overline{X}})$ with $ x \in D$, we have the following asymptotic bound at $p$:
$$
\log \widehat{h}  \leq - \log \abs{w_n}^2 - \chi(l) + \underset{\abs{w_n} \longrightarrow 0}{O(1)}  
	 \leq \abs{\, \log \abs{w_n}^2 \, } - \frac{l}{N} + \underset{\abs{w_n} \longrightarrow 0}{O(1)},
$$
where we used the fact that the eigenvalues of $h_{Berg}$ have growth at most $- \log|w_n|^2 -  \log (-  \log |w_n|^2)$ near the boundary, by \eqref{diagonalformeq}. Finally, $l(w) \underset{\abs{w_n} \sim 0}{\sim} \frac{\tau}{4 \pi} (-\log \abs{w_n}^2)$, and we see that $\widetilde{h}$ will be bounded provided $\frac{\tau}{4 \pi N} < 1$.

If we can take uniformly $N_b < \frac{\tau_b}{4 \pi}$ for each cusp $C_b$, the singular metric $\widetilde{h}$ will be bounded everywhere, and $\widehat{h}$ will satisfy the hypotheses of Theorem \ref{thmsingmetricnonK}. By \cite{parker1998}, we can take in any case $N_b = \frac{\tau_b}{2}$ uniformly. Now, consider an \'etale cover $X \longrightarrow X'$ ramifying at an order $l \geq 7$ over a boundary component $T_{b'} \subset \overline{X'}$. Let $T_b \subset \overline{X}$ be a boundary component projecting to $T_{b'}$. We see from the description in local coordinates that $\tau_{b'} = l\, \tau_{b}$, and that $N_{b} = N_{b'}$ is an admissible horoball size at the cusp $b$. Consequently, we have $N_{b} = \frac{1}{l} \frac{\tau_{b'}}{2} < \frac{\tau_b}{4 \pi}$, and the singular metric $\widetilde{h}$ on $T_{\overline{X}}$ satisfies all our requirements. 

While we could have used this choice of singular metric to prove Theorem \ref{thmsubvar}, our previous choice uses the bigness of $K_{\overline{X}}$ when $n \geq 4$, provided by \cite{baktsi15}. This gives a better bound in dimension 4; we would similarly obtain the better bound $l \geq 5$ in dimension 3 if it were proved that all toroidal compactifications of this dimension are of general type.  
\end{rem} 

\section{Birational transformation between logarithmic and standard projectivized tangent bundles} \label{biratsection}

 In this section, we introduce a construction that will be useful in Section \ref{nefnesssection}, when we study the nefness of the cotangent bundle of a toroidal compactification. \\

The plan of our work in the next sections is straightforward: we will first show that the logarithmic cotangent bundle of a toroidal compactification is nef, using Proposition \ref{integralsubmanifold}, and then use this result to study the standard cotangent bundle. To do this, we will resolve the birational map $\mathbb P\left( T_{\overline{X}} (- \log D) \right) \dashrightarrow \mathbb P\left(T_{\overline{X}} \right)$ into a sequence of two blowing-ups:
\begin{equation} \label{eqresolution}
\mathbb P\left( T_{\overline{X}} (- \log D) \right) \; \overset{\pi}{\longleftarrow} \; \widetilde{Y} \; \overset{\pi_0}{\longrightarrow} \; \mathbb P\left( T_{\overline{X}} \right).
\end{equation}

With this construction, it will not be hard to express the pullbacks of the two tautological line bundles onto $\widetilde{Y}$, in term of each other. Therefore, we will be able later on to deduce a condition for $\Omega_{\overline{X}}$ to be nef, knowing that $\Omega_{\overline{X}} (\log D)$ is nef.

In the rest of the section, we describe the resolution \eqref{eqresolution}: in fact, it holds for more general log-pairs than toroidal compactifications. Actually, for any log-pair $(X , D)$ with \emph{smooth boundary}, there is a canonical way to resolve the map $\mathbb P( T_{X} ( -\log D)) \dashrightarrow \mathbb P(T_{X})$, by blowing up a single smooth analytic subset in each of these two manifolds. \\

For the rest of the section, $(X, D)$ will be a log-pair with smooth boundary. We will denote by $Y = \mathbb P( T_X(-\log D))$ the projectivized bundle of the logarithmic tangent bundle, with its associated tautological bundle $\mathcal O_Y (1)$. In the same way, let $Y_0 = \mathbb P(T_X)$, and let $\mathcal O_{Y_0}(1)$ be its tautological bundle. We will denote by $p : Y \longrightarrow X$ and $p_0 : Y_0 \longrightarrow X$ the canonical projections. \\

On $(X, D)$, we have the usual logarithmic cotangent exact sequence:
\begin{equation} \label{logcotexact}
0 \longrightarrow \Omega_{X} \longrightarrow \Omega_X(\log D) \overset{\mathrm{res}}{\longrightarrow} \mathcal O_D \longrightarrow 0, 
\end{equation}
the last arrow being the Poincaré residue map. The surjective morphism $\Omega_{X} (\log D) \longrightarrow \mathcal O_{D}$ induces a section of the projection $p^{-1} (D) = \mathbb P^\ast \left( \Omega_{X} (\log D) |_D \right) \longrightarrow D$, whose image we will denote by $Z$.\\

In a similar way, we can write the following exact sequence:
\begin{equation} \label{exactseq2} 
0 \longrightarrow \Omega_X(\log D) \otimes \mathcal O(-D) \longrightarrow \Omega_X \longrightarrow \Omega_D \longrightarrow 0,
\end{equation}
where the last arrow is induced by the restriction to $T_D$, and the first arrow is given in local coordinates by
$$
\left( \sum_i {v_i dz_i} + v_n \frac{d z_n}{z_n} \right) \otimes z_n \mapsto \sum_i {(z_n v_i) dz_i} + v_n dz_n,
$$
where $(z_1, ..., z_n)$ are local coordinates such that $z_n$ is an  equation for $D$. Exactly as before, the last arrow induces a closed immersion $\mathbb P_D(T_D) \cong \Ps{X}(\Omega_D) \hookrightarrow \mathbb P_X(T_X)$, whose image we will denote by $Z_0$.

With these notations, the following result can be proved in a straightforward way.

\begin{prop} \label{resolution} The natural birational map $Y \dashrightarrow Y_0$ induces an isomorphism of projective manifolds:
$$
\mathrm{Bl}_Z Y \overset{\simeq}{\longrightarrow} \mathrm{Bl}_{Z_0} Y_0.
$$
Moreover, if $\pi : \mathrm{Bl}_Z Y \longrightarrow Y$ and $\pi_0 : \mathrm{Bl}_{Z_0} Y_0 \longrightarrow Y_0$ denote the respective blowing-ups, then the strict transform of $p^{-1} (D)$ corresponds under this isomorphism to the exceptional divisor of $\pi_0$. In the same manner, the strict transform of $p_0^{-1} (D)$ under $\pi_0$ corresponds to the exceptional divisor of $\pi$.
\end{prop}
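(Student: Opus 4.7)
The plan is to verify the isomorphism $\mathrm{Bl}_Z Y \simeq \mathrm{Bl}_{Z_0} Y_0$ and the exceptional-divisor statements directly in local coordinates. Away from $p^{-1}(D)$ the map $T_X(-\log D) \to T_X$ is an isomorphism, hence so is $Y \dashrightarrow Y_0$, and no modification is needed; the whole content is local near $D$.

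Choose local coordinates $(z_1, \ldots, z_n)$ on $X$ with $D = \{z_n = 0\}$, so that $(\partial_{z_1}, \ldots, \partial_{z_{n-1}}, z_n \partial_{z_n})$ is a frame of $T_X(-\log D)$. Writing $[v_1 : \cdots : v_n]$ and $[w_1 : \cdots : w_n]$ for the corresponding projective fiber coordinates on $Y$ and $Y_0$, the birational map reads $(z, [v]) \mapsto (z, [v_1 : \cdots : v_{n-1} : z_n v_n])$, with indeterminacy locus $Z = \{z_n = 0,\; [v] = [0 : \cdots : 0 : 1]\}$, and contracts $p^{-1}(D) \setminus Z$ precisely onto $Z_0 = \{z_n = w_n = 0\}$. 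Since $Z$ only meets the chart $v_n \neq 0$, I would blow up the ideal $(z_n, \xi_1, \ldots, \xi_{n-1})$ (with $\xi_i = v_i/v_n$) there and compute the resulting morphism to $Y_0$ in each affine chart. In the chart $t_0 = 1$ (so $\xi_i = t_i z_n$), the lifted map becomes $(z, t) \mapsto (z, [t_1 : \cdots : t_{n-1} : 1])$, which lands in the open set $w_n \neq 0$ of $Y_0$ and thus avoids $Z_0$; the lift to $\mathrm{Bl}_{Z_0} Y_0$ is automatic. In any chart $t_j = 1$ with $j \geq 1$ (so $z_n = t_0 \xi_j$, $\xi_i = t_i \xi_j$), the image lies in $w_j \neq 0$ with $\eta_i = t_i$ ($i \neq j, n$), $\eta_n = t_0$, and the relation $z_n = \xi_j \eta_n$ is precisely the defining equation of the affine chart $b = 1$, $a = \xi_j$ of $\mathrm{Bl}_{Z_0} Y_0$; the resulting map is a coordinate-preserving isomorphism. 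Gluing yields $\mathrm{Bl}_Z Y \simeq \mathrm{Bl}_{Z_0} Y_0$. Alternatively, one may invoke the universal property of blowing up twice: these local calculations show that the ideal of $Z_0$ pulls back to a locally principal ideal on $\mathrm{Bl}_Z Y$ and symmetrically, giving two inverse morphisms between the blow-ups.

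The exceptional-divisor claims follow by reading off the same charts. In the chart $t_j = 1$ with $j \geq 1$, the total transform of $p^{-1}(D) = \{z_n = 0\}$ is $\{t_0 \xi_j = 0\}$, splitting as the exceptional divisor $\widetilde E = \{\xi_j = 0\}$ of $\pi$ and the strict transform $\{t_0 = 0\}$; under the identification the latter corresponds to $\{\eta_n = 0\}$, the exceptional divisor of $\pi_0$. Dually, the total transform of $p_0^{-1}(D) = \{a \eta_n = 0\}$ splits as the exceptional divisor $\{\eta_n = 0\}$ of $\pi_0$ and the strict transform $\{a = 0\} = \{\xi_j = 0\} = \widetilde E$. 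The main obstacle is bookkeeping: one must track base and fiber coordinates across several overlapping charts of $Y$, $Y_0$ and their blow-ups, and check consistently that the two prescribed exceptional divisors really are swapped under the identification.
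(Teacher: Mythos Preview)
Your proposal is correct and is exactly the kind of straightforward local-coordinate verification the paper has in mind; in fact the paper omits the proof entirely, stating only that the result ``can be proved in a straightforward way.'' Your identification of $Z$ and $Z_0$ in coordinates matches the paper's definitions via the residue map and the restriction $\Omega_X \to \Omega_D$, and the chart-by-chart computation of the blow-ups is the natural way to fill in the details.
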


Now, let $\widetilde{Y} = \mathrm{Bl}_{Z} Y$, which is canonically identified to $\mathrm{Bl}_{Z_0} Y_0$. With the same notations as before, let $E, E_0 \subset \widetilde{Y}$ be the exceptional divisors of the respective projections $\pi, \pi_0$. Keeping track of the tautological line bundles of the two blowing-ups, we can prove the following:

\begin{prop} \label{propexceptional} On $\widetilde{Y}$, we have the following isomorphism of line bundles:
$$
\pi^\ast \mathcal O_Y(1) \simeq \pi_0^\ast \mathcal O_{Y_0}(1) \otimes_{\mathcal O_{\widetilde{Y}}} \mathcal O(E)
$$
\end{prop}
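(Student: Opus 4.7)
The plan is to realize both $\pi^\ast \mathcal O_Y(1)$ and $\pi_0^\ast \mathcal O_{Y_0}(1)$ on $\widetilde{Y}$ in terms of naturally arising line subsheaves of $\tilde{p}^\ast T_X$, where $\tilde{p} := p \circ \pi = p_0 \circ \pi_0$, and then to compare them by a direct local computation. Starting from the tautological inclusion $\mathcal O_Y(-1) \hookrightarrow p^\ast T_X(-\log D)$, composing with the natural inclusion $T_X(-\log D) \hookrightarrow T_X$ dual to \eqref{logcotexact}, and pulling back via $\pi$, one obtains a nonzero morphism of line bundles
$$
\pi^\ast \mathcal O_Y(-1) \longrightarrow \tilde{p}^\ast T_X.
$$
On the other hand, the tautological inclusion on $Y_0$ pulls back to a \emph{saturated} line subbundle $\pi_0^\ast \mathcal O_{Y_0}(-1) \hookrightarrow \tilde{p}^\ast T_X$. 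Outside $\tilde{p}^{-1}(D)$ the inclusion $T_X(-\log D) \hookrightarrow T_X$ is an isomorphism and the birational map $Y \dashrightarrow Y_0$ is regular, so the two line subsheaves coincide on this dense open set. Since $\pi_0^\ast \mathcal O_{Y_0}(-1)$ is already saturated globally, it must agree everywhere with the saturation of $\pi^\ast \mathcal O_Y(-1)$ inside $\tilde{p}^\ast T_X$. The problem thus reduces to measuring, as a divisor on $\widetilde{Y}$, the defect of saturation of this inclusion.

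I would then pass to local coordinates $(z_1, \ldots, z_n)$ on $X$ with $D = \{z_n = 0\}$ and work in the chart $v_n \neq 0$ of $Y$. There $Z$ is cut out by $z_n = v_1 = \ldots = v_{n-1} = 0$, and the generator of $\mathcal O_Y(-1)$ seen inside $p^\ast T_X$ via the logarithmic frame reads
$$
s_Y \; = \; \sum_{i=1}^{n-1} v_i\, \Der{z_i} + z_n \Der{z_n}.
$$
In the affine blow-up chart where $v_i = z_n a_i$ (so that $E = \{z_n = 0\}$ locally), one immediately computes
$$
\pi^\ast s_Y \; = \; z_n \Bigl( \sum_{i=1}^{n-1} a_i\, \Der{z_i} + \Der{z_n} \Bigr),
$$
and the bracketed expression is precisely a generator of $\pi_0^\ast \mathcal O_{Y_0}(-1)$ inside $\tilde{p}^\ast T_X$ in the matching chart $u_n \neq 0$ of $Y_0$ (with $u_i = a_i$). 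This yields the local identity $\pi^\ast \mathcal O_Y(-1) = \pi_0^\ast \mathcal O_{Y_0}(-1) \otimes \mathcal O(-E)$. In the complementary charts of $Y$ (where $v_j \neq 0$ for some $j < n$), the locus $Z$ is empty, $\pi$ is already an isomorphism, and the same identity holds trivially. Dualizing globally yields the announced formula.

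The main obstacle, such as it is, lies entirely in the bookkeeping: one must correctly match the two descriptions of $\widetilde{Y}$ from Proposition \ref{resolution} in a common coordinate system, verify that the exceptional divisor appearing in the calculation is indeed that of $\pi$ (and not of $\pi_0$), and check consistency across overlapping charts via the transition functions for $\mathcal O_Y(\pm 1)$ and $\mathcal O_{Y_0}(\pm 1)$. Once these identifications are cleanly set up, the proposition reduces to the one-line computation above.
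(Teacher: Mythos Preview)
Your proposal is correct and is precisely the kind of argument the paper has in mind: the paper does not spell out a proof but merely says the formula follows by ``keeping track of the tautological line bundles of the two blowing-ups,'' and your comparison of the two tautological subsheaves inside $\tilde{p}^\ast T_X$ together with the local computation in the chart $v_n\neq 0$ is exactly that. The only point to be careful about, as you note yourself, is checking the remaining blow-up charts (where some $v_j$ rather than $z_n$ is the distinguished generator of the ideal of $Z$); the same factorization goes through there with $v_j$ as the local equation of $E$, so nothing new happens.
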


It is easy to see that $\left[ w_n \frac{\partial}{\partial w_n} \right]$ realizes a global non-vanishing section of $\mathcal O_Y(1)$ on $Z$. Pulling back via $\pi$, we find:
\begin{prop} \label{restricttrivial} The restriction of $\pi^\ast \mathcal O_Y(1)$ to $E$ is trivial.
\end{prop}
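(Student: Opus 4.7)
The plan is to reduce the claim to showing that $\mathcal O_Y(1)$ is already trivial on the center $Z$ itself, and then transfer triviality to the exceptional divisor $E$ by using that $\pi|_E : E \to Z$ is a projective bundle.

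For the first step, I would pin down $Z$ intrinsically. By construction $Z$ is the image of the section of $p^{-1}(D) = \mathbb P(T_X(-\log D)|_D) \to D$ dual to the residue map in \eqref{logcotexact}; concretely this is the sub-line-bundle $\mathcal O_D \hookrightarrow T_X(-\log D)|_D$ sending $1$ to the class of $z_n \partial/\partial z_n$ (where $z_n$ is any local equation of $D$). By the universal property of $\mathbb P(T_X(-\log D))$, pullback of $\mathcal O_Y(-1)$ along a section is canonically the corresponding sub-line-bundle of $T_X(-\log D)|_D$. Hence
$$
\mathcal O_Y(-1)|_Z \;\cong\; \mathcal O_D \;\cong\; \mathcal O_Z,
$$
and dually $\mathcal O_Y(1)|_Z$ is trivial, with a global non-vanishing section given by the class $[z_n \partial/\partial z_n]$, as already noted in the hint preceding the statement.

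For the second step, since $Z \subset Y$ is a smooth subvariety and $\pi : \widetilde Y = \mathrm{Bl}_Z Y \to Y$ is its blow-up, the exceptional divisor $E = \pi^{-1}(Z)$ is canonically $\mathbb P(N_{Z/Y}^\vee)$, and the map $\pi|_E : E \to Z$ is just the structure morphism of this projective bundle. Therefore
$$
\pi^\ast \mathcal O_Y(1)\big|_E \;\cong\; (\pi|_E)^\ast \bigl(\mathcal O_Y(1)|_Z\bigr) \;\cong\; (\pi|_E)^\ast \mathcal O_Z \;\cong\; \mathcal O_E,
$$
which is exactly the claim.

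The only non-formal point is checking that $[z_n \partial/\partial z_n]$ truly defines a global section of $\mathcal O_Y(1)|_Z$, i.e.\ is independent of the choice of local defining equation of $D$: if $\tilde z_n = u z_n$ for a local unit $u$, then $\tilde z_n \partial/\partial \tilde z_n - z_n \partial/\partial z_n$ lies in the image of $T_D \hookrightarrow T_X(-\log D)|_D$ (i.e.\ in the kernel of the quotient map to $\mathcal O_D$ dual to the residue), so the two vectors define the same point of $\mathbb P(T_X(-\log D)|_D)$ and therefore the same section on $Z$. This compatibility is the only delicate ingredient; once it is in place, the rest of the argument is just the two tautological identifications above.
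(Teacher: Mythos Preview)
Your argument is correct and is essentially the paper's proof spelled out: the paper simply observes that $[w_n\,\partial/\partial w_n]$ is a global non-vanishing section of $\mathcal O_Y(1)$ on $Z$, then pulls back along $\pi|_E : E \to Z$. Your first two steps make this precise via the universal property and the projective-bundle structure of $E$, and together they already constitute a complete proof.

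Your final paragraph, however, is both unnecessary and slightly off. It is unnecessary because your step~1 already identifies $\mathcal O_Y(-1)|_Z$ with $\mathcal O_D$ \emph{intrinsically} (as the sub-line-bundle dual to the residue quotient), so no local gluing check is required. It is slightly off because (i) there is no natural inclusion $T_D \hookrightarrow T_X(-\log D)|_D$: dualizing the log conormal sequence gives $0 \to \mathcal O_D \to T_X(-\log D)|_D \to T_D \to 0$, so $T_D$ is the quotient, not a sub; and (ii) two vectors differing by a nonzero element would not define the same projective point in any case. The correct local statement is stronger than what you wrote: $\tilde z_n\,\partial/\partial \tilde z_n$ and $z_n\,\partial/\partial z_n$ are actually \emph{equal} as sections of $T_X(-\log D)|_D$, since both are the image of $1 \in \mathcal O_D$ under the canonical map dual to the residue.
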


We see from this result that if $W \subset Y_0$ is a subvariety with strict transform under $\pi_0$ denoted by $\widetilde{W}$, we can compute the maximal intersection of $\mathcal O_{Y_0} (1)$ with $W$ in terms of  intersection numbers of $\widetilde{W}$ with $\pi^\ast \mathcal O_{Y}(1)$ and $E$. Indeed, we have
\begin{align} \nonumber
c_1 (\mathcal O_{Y_0})^{\dim W} \cdot W & = c_1 (\pi_0^\ast \mathcal O_{Y_0})^{\dim W} \cdot \widetilde{W} \\ \nonumber
					& = c_1 (\pi^\ast \mathcal O_{Y} \otimes \mathcal O(-E))^{\dim W} \cdot \widetilde{W} \\ \nonumber
					& = c_1 (\pi^\ast \mathcal O_{Y})^{\dim W} \cdot \widetilde{W} + (-1)^{\dim W} E^{\dim W} \cdot \widetilde{W} \\
					& = c_1 (\mathcal O_{Y})^{\dim W} \cdot \pi( \widetilde{W} ) + (-1)^{\dim W} E^{\dim W} \cdot \widetilde{W}. \label{equalityintersectionnumbers}
\end{align}
We will see in the next sections that in the case where $(\overline{X}, D)$ is a toroidal compactification, we can estimate the first term of the right hand side of this last equation, in terms of the Bergman metric on $\overline{X} \setminus D$. As for the second member, we can prove a more general result, for any log-pair with smooth boundary. To estimate the intersection numbers with $E \subset \widetilde{Y}$, we can use the following result, which determines the normal bundle to $Z$. 

\begin{prop} \label{isomnormal} There is a canonical isomorphism
\begin{equation} \label{isomnormaleq}
N^\ast_{Z / Y} \simeq p^\ast \left( \left. \Omega_{X} \right|_D \right).
\end{equation}
\end{prop}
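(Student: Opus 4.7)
The plan is to construct the claimed isomorphism directly from the Euler surjection of $Y$ and then check bijectivity by a short computation in local coordinates. Throughout I adopt the paper's convention, so that $Y = \mathbb{P}(T_X(-\log D))$ parametrises lines in $T_X(-\log D)$ and the tautological surjection reads $e : p^\ast\Omega_X(\log D) \twoheadrightarrow \mathcal{O}_Y(1)$.

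The first step is to record the canonical data at $Z$. By construction $Z$ corresponds to the line subbundle $\mathcal{O}_D \hookrightarrow T_X(-\log D)|_D$ dual to the residue quotient $\Omega_X(\log D)|_D \twoheadrightarrow \mathcal{O}_D$, so that $\mathcal{O}_Y(-1)|_Z$ is canonically trivialised by the cuspidal direction; dualising gives a canonical $\mathcal{O}_Y(1)|_Z \simeq \mathcal{O}_D$, under which $e|_Z$ is precisely the residue map. I would then form the composition
$$
\phi \;:=\; e \circ p^\ast(\iota) \;:\; p^\ast\Omega_X \longrightarrow \mathcal{O}_Y(1),
$$
where $\iota : \Omega_X \hookrightarrow \Omega_X(\log D)$ is the inclusion from \eqref{logcotexact}. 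The key point is that $\phi|_Z = 0$: restricting \eqref{logcotexact} to $D$ identifies the image of $\iota|_D$ with $\Omega_D \subset \Omega_X(\log D)|_D$, which is exactly the kernel of the residue. Since $\mathcal{O}_Y(1)$ is locally free, $\phi$ therefore factors as a morphism $p^\ast\Omega_X \longrightarrow \mathcal{O}_Y(1)\otimes \mathcal{I}_Z$, where $\mathcal{I}_Z$ is the ideal sheaf of $Z$. Tensoring with $\mathcal{O}_Z$ and using $(\mathcal{O}_Y(1)\otimes \mathcal{I}_Z)\otimes_{\mathcal{O}_Y}\mathcal{O}_Z \simeq \mathcal{O}_Y(1)|_Z \otimes N^\ast_{Z/Y}$ together with the trivialisation $\mathcal{O}_Y(1)|_Z \simeq \mathcal{O}_D$ yields a canonical morphism
$$
\bar\phi \;:\; p^\ast(\Omega_X|_D) \longrightarrow N^\ast_{Z/Y}.
$$

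To check that $\bar\phi$ is an isomorphism I would compute in local coordinates $(z_1,\dots,z_n)$ on $X$ with $D = \{z_n = 0\}$ and affine fiber coordinates $(\xi_1,\dots,\xi_{n-1})$ on $p^{-1}(U)$ normalised so that $Z$ is cut out by the $n$ functions $z_n, \xi_1, \dots, \xi_{n-1}$. In this chart the Euler map sends $p^\ast dz_i \mapsto \xi_i$ for $i < n$ and $p^\ast dz_n \mapsto z_n$, so $\bar\phi$ sends the frame $(dz_1|_D,\dots,dz_n|_D)$ of $\Omega_X|_D$ to the frame $(d\xi_1|_Z,\dots,d\xi_{n-1}|_Z,\, dz_n|_Z)$ of $N^\ast_{Z/Y}$, which is a change of basis.

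The main obstacle is not computational but bookkeeping: one must verify that each identification in play ($\mathcal{O}_Y(1)|_Z \simeq \mathcal{O}_D$, the residue identification of $e|_Z$, the factoring of $\phi$ through $\mathcal{I}_Z$, and the projection formula $(\mathcal{O}_Y(1)\otimes\mathcal{I}_Z)|_Z \simeq \mathcal{O}_Y(1)|_Z \otimes N^\ast_{Z/Y}$) is intrinsic, so that $\bar\phi$ really is canonical. An alternative that bypasses the local calculation is to use the flag $Z \subset p^{-1}(D) \subset Y$ to produce the short exact sequence $0 \to N^\ast_{D/X} \to N^\ast_{Z/Y} \to N^\ast_{Z/p^{-1}(D)} \to 0$, identify $N^\ast_{Z/p^{-1}(D)} \simeq \Omega_D$ via the standard normal bundle formula for a section of a projective bundle, and then invoke the five-lemma to compare $\bar\phi$ with the conormal exact sequence of $D \subset X$.
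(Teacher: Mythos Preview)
The paper does not actually include a proof of this proposition; like the neighbouring Propositions~\ref{resolution}, \ref{propexceptional} and \ref{restricttrivial} in Section~\ref{biratsection}, it is stated without argument, the author treating these facts as straightforward verifications. Your proposal is correct and supplies exactly such a verification: the map $\bar\phi$ built from the Euler surjection is canonical (each identification you list is indeed intrinsic, and the paper itself records the trivialisation $\mathcal O_Y(1)|_Z \simeq \mathcal O_D$ just before Proposition~\ref{restricttrivial}), and your local computation shows it carries a frame to a frame. The alternative you sketch via the flag $Z \subset p^{-1}(D) \subset Y$ and the five-lemma would work equally well.
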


The exceptional divisor $E$ is isomorphic, as a $D$-scheme, to $\mathbb P(N_{Z / Y}) = \mathbb P^\ast( N^\ast_{Z / Y})$. We saw in Proposition \ref{resolution} that the canonical isomorphism $\mathrm{Bl}_Z Y \cong \mathrm{Bl}_{Z_0} Y_0$ sends $E$ to the strict transform of $p_0^{-1}(D)$ under $\pi_0$. Since $\mathbb P(T_D)$ has codimension one in $p_0^{-1} (D)$, this strict transform is isomorphic to $p_0^{-1}(D)$. Actually, we have the following proposition:

\begin{prop} \label{structureofE}
The projection $\pi_0$ induces an isomorphism
\begin{equation} \label{isomexceptional}
\left. \pi_0 \right|_{E}: E \longrightarrow p_0^{-1} (D) \simeq \mathbb P(\left. \Omega_X \right|_D),
\end{equation}
determined by the isomorphism of $\mathcal O_Z$-modules \eqref{isomnormaleq}.
\end{prop}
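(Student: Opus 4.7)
The plan is to use Proposition \ref{resolution} together with the fact that blowing up along a Cartier subvariety is an isomorphism, so as to first produce a biholomorphism $\pi_0|_E : E \to p_0^{-1}(D)$, and then to verify by a local computation that it matches the isomorphism coming from Proposition \ref{isomnormal}.

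By Proposition \ref{resolution}, under the canonical identification $\widetilde{Y} = \mathrm{Bl}_Z Y \cong \mathrm{Bl}_{Z_0} Y_0$, the divisor $E$ is the strict transform of $p_0^{-1}(D)$ under $\pi_0$. Since $Z_0$ is a smooth codimension-one subvariety of the smooth variety $p_0^{-1}(D)$, it is a Cartier divisor there, and blowing it up leaves $p_0^{-1}(D)$ unchanged. This gives that $\pi_0|_E : E \to p_0^{-1}(D)$ is an isomorphism. On the other hand, $Z$ being smooth in $Y$, the exceptional divisor of $\pi$ is canonically $\mathbb{P}(N_{Z/Y})$, and composing with $p|_Z : Z \overset{\sim}{\longrightarrow} D$ and Proposition \ref{isomnormal} one gets a second identification
\[
E \;\cong\; \mathbb{P}(N_{Z/Y}) \;=\; \mathbb{P}^\ast(N^\ast_{Z/Y}) \;\cong\; \mathbb{P}^\ast(\Omega_X|_D) \;=\; p_0^{-1}(D).
\]

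The main step, and the principal obstacle, is to verify that these two isomorphisms $E \to p_0^{-1}(D)$ coincide. I would argue this by a direct local computation: fix coordinates $(z_1, \ldots, z_n)$ on $X$ with $D = \{z_n = 0\}$, with induced affine coordinates $(z_i, u_j)$ on a chart of $Y$ in which $Z = \{z_n = u_1 = \dots = u_{n-1} = 0\}$ and the birational map to $Y_0$ reads $(z_i, u_j) \mapsto (z_i, u_j/z_n)$. Blowing up $Z$ with auxiliary projective coordinates $[s_0 : s_1 : \dots : s_{n-1}]$ subject to the relations $s_i z_n = s_0 u_i$, one then reads off that $\pi_0$ sends a point of $E$ with coordinates $[s_0 : \dots : s_{n-1}]$ to the line in $T_X|_D$ spanned by $s_0 \,\partial/\partial z_n + \sum_i s_i \,\partial/\partial z_i$. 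It remains to write down Proposition \ref{isomnormal} explicitly in these coordinates (in which $du_i$ corresponds to $dz_i$ and $dz_n$ to itself, as can be checked from the cotangent sequence of $Z \hookrightarrow Y$) and check that the resulting map is the same. The computation itself is routine; the real subtlety is bookkeeping with the $\mathbb{P}$ versus $\mathbb{P}^\ast$ conventions and the identifications of the frames for $T_X$ and $T_X(-\log D)$.
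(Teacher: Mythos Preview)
Your argument is correct and follows the same route as the paper. The paper's justification is the short paragraph immediately preceding the proposition: $E$ is identified with the strict transform of $p_0^{-1}(D)$ via Proposition~\ref{resolution}, and since $Z_0 = \mathbb P(T_D)$ has codimension one in $p_0^{-1}(D)$ the blow-up restricts to an isomorphism there --- exactly your first step. The paper then simply asserts the compatibility with \eqref{isomnormaleq} without further comment, whereas you go further and sketch the local verification; so your proposal is in fact more complete than what the paper provides.
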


\section{Nefness of the cotangent bundles} \label{nefnesssection}

In the rest of the text, $(\overline{X}, D)$ will be a toroidal compactification of a ball quotient. \\

With what has been introduced until now, we can  use the results of \cite{baktsi15} to determine a condition for $\Omega_{\overline{X}}$ to be nef. For this, we let $\overline{Y} = \mathbb P_{\overline{X}}(T_{\overline{X}}(-\log D))$, with its canonical projection $p$ and tautological bundle $\mathcal O(1)_{\log}$, and $\overline{Y}_0 = \mathbb P_{\overline{X}}(T_{\overline{X}})$, with its projection $p_0$ and tautological bundle $\mathcal O(1)_0$. We start by proving that $\Omega_{\overline{X}} (\log D)$ is always nef.

\begin{proof} [Proof of Theorem \ref{nefcotangentlog}] Let $C \subset \overline{Y}$ be an irreducible curve. If $C \not\subset p^{-1} (D)$, it follows from Proposition \ref{integralsubmanifold} that $c_1 \mathcal O(1)_{\log} \cdot C \geq 0$.

If $C \subset p^{-1} (D) = \mathbb P( T_{\overline{X}} (- \log D) |_{D} )$, the result is given by the next lemma.
\end{proof}

\begin{lem} The restriction $\left. \Omega_{\overline{X}} (\log D) \right|_D$ is nef.
\end{lem}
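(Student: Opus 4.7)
The plan is to reduce the statement to each connected component of $D$, and then to exploit the fact that each such component is an abelian variety. Since $D = \bigsqcup_i D_{b_i}$, and the components are disjoint, it is enough to prove that $\Omega_{\overline{X}}(\log D)|_{D_b}$ is nef for each $b$. So I fix one component $D_b$, an abelian variety of dimension $n-1$ by the construction of Section \ref{sectioncompact}.

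The key tool is the restriction to $D_b$ of the residue sequence. Dualizing the logarithmic tangent exact sequence
\[
0 \longrightarrow T_{D_b} \longrightarrow T_{\overline{X}}(-\log D)|_{D_b} \longrightarrow \mathcal{O}_{D_b} \longrightarrow 0,
\]
(which can also be read off directly from the local frame $(e_j)_j$ of Section \ref{localcoordsect}, since $e_1,\ldots,e_{n-1}$ span $T_{D_b}$ along $\{w_n = 0\}$ while $e_n = \frac{4\pi}{\tau} w_n \partial_{w_n}$ gives the logarithmic direction), I obtain
\[
0 \longrightarrow \mathcal{O}_{D_b} \longrightarrow \Omega_{\overline{X}}(\log D)|_{D_b} \longrightarrow \Omega_{D_b} \longrightarrow 0.
\]

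Now I invoke the structural fact that $D_b$ is an abelian variety. This is the crux of the argument: it forces $\Omega_{D_b}$ to be holomorphically \emph{trivial} of rank $n-1$, and in particular nef; meanwhile $\mathcal{O}_{D_b}$ is obviously nef. I then appeal to the classical stability property of nefness under extensions: if $0 \to E' \to E \to E'' \to 0$ is a short exact sequence of vector bundles on a projective variety and both $E'$ and $E''$ are nef, then $E$ is nef (see Lazarsfeld, \emph{Positivity in Algebraic Geometry II}, Theorem 6.2.12). Applied to the sequence above, this yields that $\Omega_{\overline{X}}(\log D)|_{D_b}$ is nef, and summing over components gives the lemma.

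There is essentially no obstacle: once one writes down the residue sequence and recalls that boundary components are abelian varieties, the result is immediate from the extension property of nef bundles. The only minor point to verify is that the canonical residue sequence on $D_b$ is indeed the one I wrote, which is standard for smooth boundary divisors and can also be checked directly in the local coordinates $(w_1, \ldots, w_n)$ of Section \ref{localcoordsect}.
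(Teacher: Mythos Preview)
Your proof is correct and follows essentially the same strategy as the paper: exhibit $\Omega_{\overline X}(\log D)|_D$ as an extension with $\Omega_D$ and $\mathcal O_D$ at the two ends, observe that both are trivial since each $D_b$ is an abelian variety, and conclude nefness by stability under extensions. One minor slip worth noting: the canonical residue sequence on $D$ actually reads
\[
0 \longrightarrow \Omega_D \longrightarrow \Omega_{\overline X}(\log D)|_D \longrightarrow \mathcal O_D \longrightarrow 0,
\]
with $\Omega_D$ the subbundle and $\mathcal O_D$ the quotient (this is how the paper writes it, and dually the correct tangent sequence is $0 \to \mathcal O_{D_b} \to T_{\overline X}(-\log D)|_{D_b} \to T_{D_b} \to 0$); your arrows are reversed, and the frame $(e_j)$ only gives a smooth (not holomorphic) splitting away from $D_b$ --- but since both ends are trivial either way, this does not affect the conclusion.
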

\begin{proof} This is a basic application of the properties of the logarithmic conormal sequence. Recall that since $D$ is smooth, we have the following exact sequence of locally free $\mathcal O_D$-modules:
$$
0 \longrightarrow \Omega_D \longrightarrow \Omega_{\overline{X}}(\log D) \otimes_{ \mathcal O_{\overline{X}}} \mathcal O_D \longrightarrow \mathcal O_D \longrightarrow 0.
$$

This can be seen directly in coordinates, the second map sending $\sum_{1 \leq i \leq n} a_i dz_i + a_n \frac{dz_n}{z_n}$ to $a_n$, or by tensoring the Poincar\'{e} residue exact sequence by $\mathcal O_D$.

Since the boundary is made of abelian varieties, $\Omega_D$ is trivial on any component of $D$. Consequently, the vector bundle $\Omega_{\overline{X}}(\log D) |_{D}$ is an extension of trivial bundles, hence is nef.
\end{proof}

Let us mention the following result, first step in our study of the nefness of $\Omega_{\overline{X}}$.

\begin{prop} \label{nefbound}  When restricted to $D$, the cotangent bundle $\Omega^1_{\overline{X}}$ is nef.
\end{prop}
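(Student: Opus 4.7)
The plan is to exploit the conormal exact sequence on the smooth hypersurface $D \subset \overline{X}$,
\[
0 \longrightarrow \mathcal O(-D)|_D \longrightarrow \Omega_{\overline{X}}|_D \longrightarrow \Omega_D \longrightarrow 0,
\]
and reduce the statement to the nefness of the conormal bundle $\mathcal O(-D)|_D$. Each connected component of $D$ being an abelian variety, $\Omega_D$ is trivial, hence nef; and since an extension of nef vector bundles is itself nef (a standard fact, easily seen by pullback to a smooth curve), it suffices to prove that $\mathcal O(-D)|_D$ is nef.

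I would in fact prove the stronger statement that $\mathcal O(-D)|_{D_b}$ is \emph{ample} on each boundary component $D_b$, by exhibiting an explicit Hermitian metric on $N_{D_b/\overline{X}}$ with strictly negative curvature. Using the local description from Section \ref{sectioncompact}, $N_{D_b/\overline{X}}$ arises by descending the trivial bundle $\mathbb C \cdot \Der{w_n}$ on $D_0 \subset \widehat{G^{(N)}}$ via the $\Lambda_b$-action; differentiating \eqref{actionLambda} in the $w_n$-direction at $w_n = 0$, its automorphy factor reads
\[
\chi(a, w') \;=\; e^{-\frac{2\pi}{\tau}\norm{a}^2}\, e^{-\frac{4\pi}{\tau}\overline{a}\cdot w'}, \qquad a \in \Lambda_b, \; w' \in \mathbb C^{n-1}.
\]
A direct computation shows that the Gaussian weight $h_0(w') = e^{\frac{4\pi}{\tau}\norm{w'}^2}$ satisfies $h_0(w'+a) = \abs{\chi(a,w')}^{-2} h_0(w')$ for every $a \in \Lambda_b$, hence descends to a smooth Hermitian metric on $N_{D_b/\overline{X}}$ whose curvature
\[
-\D \partial \log h_0 \;=\; -\frac{4\pi}{\tau}\sum_{j=1}^{n-1} dw_j \wedge d\overline{w}_j
\]
is strictly negative on $D_b$. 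Consequently, $N_{D_b/\overline{X}}$ carries a metric with negative curvature, and $\mathcal O(-D)|_{D_b}$ is ample; in particular, nef.

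Combining the two steps, $\Omega_{\overline{X}}|_D$ is an extension of the trivial sheaf $\Omega_D$ by the ample sheaf $\mathcal O(-D)|_D$, and is therefore nef on every component of $D$. The only non-formal step is guessing the right Gaussian-type equivariant weight $h_0$ for the normal bundle; once the local description of Section \ref{sectioncompact} is unpacked, the matching of $h_0$ with $\chi$ and the negativity of the curvature are both immediate.
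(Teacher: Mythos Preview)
Your proof is correct and rests on the same two ingredients as the paper's: triviality of $\Omega_D$ (since each $D_b$ is an abelian variety) and ampleness of the conormal bundle $N^\ast_{D_b/\overline X}$. The paper, however, packages them slightly differently. Rather than invoking the conormal exact sequence and the fact that an extension of nef sheaves is nef, the paper observes (citing \cite{mok12}) that the tubular neighbourhood $\Omega^{(N)}_b$ of $D_b$ is biholomorphic to a neighbourhood of the zero section in $N_b$, which forces the sequence to \emph{split}: $\Omega_{\overline X}|_{D_b}\simeq N_b^\ast\oplus\Omega_{D_b}$. Nefness then follows because a direct sum of an ample bundle and a trivial bundle is nef. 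Your extension argument is a touch more general (it does not require the splitting), while the paper's version is marginally simpler once the tubular-neighbourhood structure is granted. Conversely, you supply the explicit Gaussian weight $h_0(w')=e^{\frac{4\pi}{\tau}\norm{w'}^2}$ realising the positivity of $N_b^\ast$, whereas the paper simply cites \cite{mok12} for that fact; your computation is exactly the one underlying Mok's statement.

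One small slip: with the paper's convention $\D=\overline\partial$, one has $-\D\partial\log h_0=\partial\overline\partial\log h_0=+\frac{4\pi}{\tau}\sum_j dw_j\wedge d\overline w_j$, not the negative of this. The conclusion is unaffected, since the Chern curvature $\Theta(h_0)=-\partial\overline\partial\log h_0$ then satisfies $i\Theta(h_0)<0$, which is precisely the negativity you want for $N_b$.
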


\begin{proof} As stated in \cite{mok12}, for any component $D_b$ of $D$, the neighborhoods $\Omega^{(N)}_b$ introduced in Section \ref{sectioncompact} are isomorphic to tubular neighborhoods of the zero section of the normal bundle $N_b \longrightarrow D_b$. Consequently, we have
\begin{equation} \label{exactsequenceboundary}
\left. \Omega_{\overline{X}} \right|_{D_b} \simeq N^\ast_b \oplus \Omega_{D_b} \simeq N^\ast_b \oplus \mathcal O_D^{\oplus n-1},
\end{equation}
since $D_b$ is an abelian variety. Moreover, for any such component $D_b$, the conormal bundle $N^\ast_b$ is positive (\cite{mok12}). Thus, $\left. \Omega_{\overline{X}} \right|_{D_b}$ is sum of a trivial bundle and of an ample bundle on $D_b$, hence is nef.
\end{proof}

We will now make use of the results we proved in Section \ref{biratsection} to estimate the intersection numbers of the type $c_1 \mathcal O(1)_0 \cdot C$, where $C$ is a curve of $\overline{Y}_0$, not included in the boundary. To do this, we will pull back all our objects to the blowing-up $\mathrm{Bl}_Z \overline{Y}$.  Let $\widetilde{Y}$ denotes this blowing-up, that we endow with its natural projections $\pi$ and $\pi_0$, respectively onto $\overline{Y}$ and $\overline{Y}_0$.

\begin{prop} \label{ineqprop} Let $C \subset \overline{Y}$ be a curve such that $p_0(C) \not\subset D$. Then
$$
c_1 \mathcal O(1)_0 \cdot C \geq \left( \frac{1}{n+1} \left( K_{\overline{X}} + D \right) - D \right) \cdot p_0(C).
$$
\end{prop}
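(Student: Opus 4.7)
The plan is to move the intersection number $c_1 \mathcal{O}(1)_0 \cdot C$ onto the common resolution $\widetilde{Y}$ and exploit Proposition \ref{propexceptional} to split it into a logarithmic tautological piece and an exceptional piece, which I will then estimate separately: the first via the metric-integral criterion of Proposition \ref{integralsubmanifold} applied to the Bergman metric, the second by a direct divisor computation on the blow-up.

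Let $\widetilde{C} \subset \widetilde{Y}$ be the strict transform of $C$ under $\pi_0$, and set $C' = \pi(\widetilde{C}) \subset \overline{Y}$. Proposition \ref{resolution} shows that the birational map $\overline{Y} \dashrightarrow \overline{Y}_0$ is an isomorphism above $\overline{X} \setminus D$; since $p_0(C) \not\subset D$, the curve $\widetilde{C}$ is contained in neither exceptional divisor, both $\pi|_{\widetilde{C}}$ and $\pi_0|_{\widetilde{C}}$ are birational onto their images, and $p(C') = p_0(C)$. Pulling back via $\pi_0$, using Proposition \ref{propexceptional}, and applying the projection formula yields
$$
c_1 \mathcal{O}(1)_0 \cdot C \;=\; c_1 \mathcal{O}(1)_{\log} \cdot C' \;-\; E \cdot \widetilde{C}. \qquad (\ast)
$$

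For the logarithmic term, since $C' \not\subset p^{-1}(D)$, I would apply Proposition \ref{integralsubmanifold} to a desingularization of $C'$ to obtain
$$
c_1 \mathcal{O}(1)_{\log} \cdot C' \;\geq\; \int_{C' \setminus p^{-1}(D)} \tfrac{i}{2\pi}\, \Theta(\widehat{h}_{Berg}^{\ast}).
$$
Using \eqref{curvatureprojectivized}, the non-negativity of the Fubini--Study term, and the pinching $B \leq -2$ for the bisectional curvature of $h_{Berg}$ on $\mathbb{B}^n$, the integrand is pointwise bounded below by $\tfrac{2}{\pi}\, h_{Berg}(p_\ast \xi, \overline{p_\ast \xi})$. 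Invoking Proposition \ref{propintbergman}, which via $\mathrm{Ric}(h_{Berg}) = -2(n+1)\omega_{Berg}$ converts $\int p^\ast \omega_{Berg}$ into $\tfrac{\pi}{n+1}(K_{\overline{X}} + D)\cdot p_0(C)$, this yields
$$
c_1 \mathcal{O}(1)_{\log} \cdot C' \;\geq\; \tfrac{1}{n+1}(K_{\overline{X}} + D) \cdot p_0(C).
$$

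For the exceptional term, Proposition \ref{resolution} identifies $E$ with the strict transform of the smooth divisor $p_0^{-1}(D)$ under the blow-up $\pi_0$ of the smooth subvariety $Z_0 \subset p_0^{-1}(D)$, which has multiplicity one. The usual blow-up formula then gives the divisor identity $\pi_0^{\ast}\, p_0^{-1}(D) = E + E_0$ on $\widetilde{Y}$. Intersecting with $\widetilde{C}$ and using $\widetilde{C} \not\subset E_0$ (so $E_0 \cdot \widetilde{C} \geq 0$) together with the projection formula yields
$$
E \cdot \widetilde{C} \;\leq\; p_0^{\ast} D \cdot C \;=\; D \cdot p_0(C),
$$
and substituting the two bounds into $(\ast)$ gives the stated inequality.

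The principal delicate point is the middle paragraph: one must track carefully the constants in the pointwise bisectional estimate, the curvature formula for $\widehat{h}_{Berg}^{\ast}$, and the normalization of Proposition \ref{propintbergman}, to verify they combine to produce exactly the factor $\tfrac{1}{n+1}$ in front of $K_{\overline{X}} + D$; this is ultimately forced by the identity $\mathrm{Ric}(h_{Berg}) = -2(n+1)\omega_{Berg}$.
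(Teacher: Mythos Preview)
Your proof is correct and follows essentially the same route as the paper: lift to the strict transform $\widetilde{C}$, split $c_1\mathcal O(1)_0\cdot C$ via Proposition \ref{propexceptional} into $\pi^\ast c_1\mathcal O(1)_{\log}\cdot\widetilde C - E\cdot\widetilde C$, bound the first term through Proposition \ref{integralsubmanifold} and the bisectional pinching $B\le -2$ of the Bergman metric (yielding $\tfrac{1}{n+1}(K_{\overline X}+D)\cdot p_0(C)$ via Proposition \ref{propintbergman}), and bound $E\cdot\widetilde C\le D\cdot p_0(C)$ from the divisor identity $\pi_0^\ast p_0^\ast D = E+E_0$ together with $E_0\cdot\widetilde C\ge 0$. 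Your handling of the exceptional term via the blow-up formula on the $\pi_0$ side is in fact a bit more transparent than the paper's one-line justification.
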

\begin{proof}
We denote by $\widetilde{C}$ the proper transform of the curve $C$ by the blowing-up $\pi_0$. Then Proposition \ref{propexceptional} gives
\begin{align*}
c_1 \mathcal O(1)_0 \cdot C & = \pi^{\ast}_0 \left( c_1 \mathcal O(1)_0 \right) \cdot \widetilde{C} \\ 
		   & = \pi^{\ast} \left( c_1 \mathcal O(1)_{\log} - E \right) \cdot \widetilde{C},
\end{align*}

Moreover, thanks to Proposition \ref{integralsubmanifold}, we obtain
\begin{align*}
\pi^{\ast} \left( c_1 \mathcal O(1)_{\log} \right) \cdot \widetilde{C} & = c_1 \mathcal O(1)_{\log} \cdot \pi(\widetilde{C}) \\
						          & \geq \int_{ Y \cap \pi (\widetilde{C})} \frac{i}{2 \pi} \Theta(\widehat h^\ast)  \\
						          & = \int_{Y \cap C} \frac{i}{2 \pi} \Theta(\widehat h^\ast).
\end{align*}
The Bergman metric being of constant sectional curvature equal to $-4$ with our choice of normalization, the following equality is true at any point $(x, \left[ v \right]) \in Y$, for any $\xi \in T_{(x, \left[v\right])} Y$:
$$
\frac{i}{2 \pi} \Theta(\widehat h^{\ast})_{(x, [v])} \cdot(\xi, \xi)  \geq -\frac{i}{2 \pi} \frac{\Theta_{T_X}(v, v, \xi, \xi)}{\norm{v}^2}
						           \geq \frac{1}{ \pi} \omega_{Berg}( \xi, \xi),
$$
so
$$
\pi^\ast \left( c_1 \mathcal O(1)_{\log} \right) \cdot \widetilde{C} \geq \int_{X \cap p_0 (C) } \frac{1}{\pi} \omega_{Berg}.
$$
However, because of Proposition \ref{propintbergman}, we obtain
$$
\int_{X \cap p_0 (C)} \frac{1}{\pi} \omega_{Berg}  =  \frac{1}{n+1} (K_{\overline{X}} + D)\cdot C.
$$
 
Besides, since $E$ is an irreducible component of $(p \circ \pi)^{-1}(D)$, we have 
$$
E \cdot \widetilde{C} \geq D \cdot (p \circ \pi)(\widetilde{C}) = D \cdot p_{0}(C).
$$
\end{proof}
We can now prove our main result on the nefness of $\Omega_{\overline{X}}$.
\begin{proof} [Proof of Theorem \ref{thmnef} ]
Consider an irreducible curve $C \subset \overline{Y_0}$. First assume that $p_0(C) \subset D$. According to Proposition \ref{nefbound}, the bundle $\left. \Omega_{\overline X} \right|_{D}$ is nef. Since $C$ can be seen as a curve of the projective space $\mathbb P(\left. T_{\overline X} \right|_{D})$, we see that $\int_{C} {c_1 \mathcal O(1)_0} \geq 0$.

Assume now that $C \cap Y \neq \emptyset$. Then, according to Proposition \ref{ineqprop}, we have
\begin{equation*}
\int_C {c_1 \mathcal O(1)_{0}} \geq \frac{1}{n+1} \int_{p_0(C)} {c_1\left(K_{\overline{X}} + \left( 1 - (n + 1) \right) D \right)}.
\end{equation*}
In addition, since $\sigma$ ramifies to an order larger than $7$ along the boundary, we have
$$
\int_{p_0(C)} { c_1 \left( K_{\overline{X}} + \left( 1 - (n + 1) \right) D \right) } \geq \int_{p_0(C)} {c_1 \left( \sigma^\ast \left( K_{\overline X'} + D' \right) - \frac{n+1}{7} \sigma^{\ast} D' \right)}.
$$
Therefore,
\begin{equation} \label{ineqcurves}
\begin{split}
\int_{p_0(C)} {c_1\left(K_{\overline{X}} + \left( 1 - (n + 1) \right) D \right)}  & \geq \left( \deg \sigma \right) \int_{\sigma (p_0(C))} {c_1 \left(K_{\overline X'} + \left(1 - \frac{n+1}{7} \right) D' \right)} \\
& \geq \left( \deg \sigma \right) \int_{\sigma (p_0(C))} {c_1 \left(K_{\overline X'} + \left(1 - \frac{n+1}{2 \pi} \right) D' \right)},
\end{split}
\end{equation}
since $\left( D \cdot \left( \sigma \circ p_0 (C) \right)  \right) \geq 0$ (the divisor $D$ and the curve $\sigma \circ p_0(C)$ are in normal intersection). The line bundle $K_{\overline X} + \left(1 - \frac{n+1}{2 \pi} \right) D$ is nef by \cite{baktsi15}, so the last term of \eqref{ineqcurves} is non-negative, which gives the result.
\end{proof}

\section{Immersed submanifolds of $\overline{X}$}

We know turn to the proof of Corollary \ref{corolamplemod}. As recalled in \cite{dicerbo2015}, there is a simple criterion to prove that a line bundle is ample modulo an analytic subset.

\begin{prop} [ cf. \cite{dicerbo2015}] \label{ampleboundary} Let $(X, D)$ be a logarithmic pair, and let $L$ be a nef line bundle on $X$. If for any subvariety $V$ of $X$, not included in $D$, we have $c_1(L)^{\dim V} \cdot V > 0$, then $L$ is ample modulo $D$.
\end{prop}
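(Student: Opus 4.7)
The plan is to reduce the statement to Nakamaye's theorem on augmented base loci. First, applying the hypothesis to $V = X$ itself (which is certainly not contained in $D$) gives $c_1(L)^{\dim X} \cdot X > 0$, so that $L$ is big. Combined with the hypothesis that $L$ is nef, this makes $L$ a nef and big line bundle, which is the setting in which the main positivity tool applies.

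Next, I would invoke Nakamaye's theorem, which asserts that for a nef and big line bundle $L$ on a projective variety, the augmented base locus $\mathbb{B}_+(L)$ coincides with the null locus
\[
\mathrm{Null}(L) \;=\; \bigcup_{\substack{V \subset X \text{ irred.} \\ c_1(L)^{\dim V} \cdot V = 0}} V.
\]
By our hypothesis, every irreducible subvariety $V$ appearing in this union must be contained in $D$, since if $V \not\subset D$ then $c_1(L)^{\dim V} \cdot V > 0$. Therefore $\mathrm{Null}(L) \subset D$, and consequently $\mathbb{B}_+(L) \subset D$.

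Finally, I would conclude by recalling the standard characterization of $\mathbb{B}_+(L)$: for any sufficiently divisible $m$, the rational map defined by $|mL|$ is a morphism off $\mathbb{B}_+(L)$ which is an embedding on the complement of $\mathbb{B}_+(L)$. Since $\mathbb{B}_+(L) \subset D$, this means that for some $m \gg 0$, the linear system $|mL|$ defines a rational map $X \dashrightarrow \mathbb{P}^N$ which is an embedding off $D$, which is precisely the condition that $L$ be ample modulo $D$. The main step, and the only non-trivial input, is Nakamaye's theorem; the rest is bookkeeping between equivalent formulations of $\mathbb{B}_+$.
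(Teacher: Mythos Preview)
Your proof is correct. Note, however, that the paper does not actually give its own proof of this proposition: it is stated with the attribution ``cf.~\cite{dicerbo2015}'' and used as a black box. Your argument via Nakamaye's theorem---first observing that $L$ is big by taking $V = X$, then identifying $\mathbb{B}_+(L)$ with $\mathrm{Null}(L) \subset D$, and finally invoking the characterization of $\mathbb{B}_+(L)$ as the locus where $\phi_{|mL|}$ fails to be an embedding---is exactly the standard route to this statement, and is in all likelihood the argument in the cited reference as well.
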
 

Now, consider an \'etale cover of a ball quotient $X \longrightarrow X'$, ramifying at order larger than $7$ on the boundary. Let $\overline{V} \subset \overline{X}$ be an immersed subvariety, not included in $D$. Let $q : \mathbb P(T_{\overline{V}}) \longrightarrow V$ be the natural projection. There is a well defined immersion $\mathbb P(T_{\overline{V}}) \overset{f}{\longrightarrow} \mathbb P(T_{\overline{X}})$, and $f^\ast \mathcal O_{\mathbb P(T_{\overline{X}})}(1) = \mathcal O_{\mathbb  P(T_{\overline{V}})}(1)$ is nef because of Theorem \ref{thmnef}.

It follows from the discussion of Section \ref{sectbigstdcot} that $\mathcal O_{\mathbb P(T_{\overline{X}})}(1)$ admits a singular metric $\widehat{h}$ with positive curvature, and such that $- \log \widehat{h}$ is bounded from above near the boundary. Pulling back to $\mathbb P(T_V)$, we see that the same holds for $f^\ast \widehat{h}$. In particular, the metric $f^\ast \widehat{h}$ has positive curvature in the sense of currents, and the absolutely continuous part of this current is given by the curvature on the open part of $\overline{V}$.

Let $\overline{W} \subset \mathbb P(T_{\overline{V}})$ be a subvariety which is not included in $q^{-1}(D)$, and call $W = \overline{W} \cap f^{-1} \left( \mathbb P(T_X) \right)$ its open part. We can apply Theorem \ref{boucksomthm} to the metric $f^\ast \widetilde{h}$, to get
$$
c_1 \mathcal O_{\mathbb P(T_{\overline{V}})}(1)^{\dim W} \cdot \overline{W} = \mathrm{vol}(\mathcal O_{\mathbb P(T_V)}(1) |_W ) \geq \int_{W} \left( \frac{i}{2 \pi} \Theta(f^\ast \widetilde{h})^{\dim W} \right) > 0,
$$
where we used the fact that $\mathcal O_{\mathbb P(T_{\overline{V}})}$ is nef to obtain the first equality. Thus, applying Proposition \ref{ampleboundary}, we immediately obtain Corollary \ref{corolamplemod}.

\subsection{Volume and numerical intersection numbers}

In this last section, we would like to show how we can obtain lower bounds on the volume of $\Omega_{\overline{X}}$, under the hypotheses of Theorem \ref{thmnef}. Let $X \longrightarrow X'$ an étale cover of a ball quotient, such that $\overline{X} \longrightarrow \overline{X'}$ ramifies at order $7$. If $\overline{V}$ is a smooth compact manifold of dimension $p \leq n$, and if we have a (non-necessarily injective) immersion $f : \overline{V} \longrightarrow \overline{X}$, with $f(\overline{V}) \nsubset D$, then there is an induced holomorphic map $\mathbb P(T_{\overline{V}}) \overset{\widetilde{f}}{\longrightarrow} \mathbb P(T_{\overline{X}})$. By Theorem \ref{thmnef}, the line bundle $\mathcal O_{\mathbb P(T_{\overline{V}})}(1) = \widetilde{f}^\ast \mathcal O_{\mathbb P(T_{\overline{X}})}(1)$ is nef, so 
$$
\mathrm{vol} (\Omega_{\overline{V}}) = \mathrm{vol} (\mathcal O_{\mathbb P(T_{\overline{V}})}(1)) = c_1 \left( \mathcal O_{\mathbb P(T_{\overline{V}})}(1) \right)^{2p-1}.
$$

We will briefly explain how we can compute lower bounds to these numbers. Since the essential technical part of the computations is very close to \cite{div16}, so we will only present the main ideas leading to them.
 
Let $\overline{W} = \mathbb P(T_{\overline{V}})$, and let $q_{\overline{W}} : \overline{W} \longrightarrow \overline{V}$ be the natural projection. Denote by $V = f^{-1}(X)$ the open part of $\overline{V}$. We resume our previous notations: let $\overline{Y}_0 = \mathbb P(T_{\overline{X}} (- \log D))$, and $\overline{Y} = \mathbb P(T_{\overline{X}})$, with their respective line bundles $\mathcal O(1)_{\log}$ and $\mathcal O(1)_0$. Let $\widetilde{Y}= \mathrm{Bl}_Z \overline{Y}$, where $Z$ is the subvariety of $\overline{Y}$ introduced in Section \ref{biratsection}, and let $E \subset \widetilde{Y}$ be the exceptional divisor. We have the following fibre square, where $\widetilde{W}$ is the blowing-up of $\overline{W}$ along $\widetilde{f}^{-1}(Z)$:

\begin{diagram}
\widetilde{W}         &\rTo^{\widetilde{g}}   &\widetilde{Y}\\
\dTo_{q}  &           &\dTo_{\pi_{0}}\\
\overline{W}        &\rTo^{\widetilde{f}}   &\overline{Y}
\end{diagram}

By \eqref{equalityintersectionnumbers}, we obtain the following inequalities of intersection numbers:
\begin{equation} \label{intersectiontangent} 
 \left[ c_1 \mathcal O(1)_{\overline{W}} \right]^{2p -1} = \left[ \widetilde{g}^\ast \; \pi^\ast c_1 \mathcal O(1)_{\log} \right]^{2p-1} + (-E)^{2p - 1} \cdot \widetilde{W}.
\end{equation}
By Theorem \ref{nefcotangentlog}, the line bundle $\widetilde{g}^\ast \mathcal O(1)_{\log}$ is nef on $\widetilde{Y}$, and
\begin{align*}
\left[ \widetilde{g}^\ast \; \pi^\ast c_1 \mathcal O(1)_{\log} \right]^{2p - 1} & = \mathrm{vol} \left( \widetilde{g}^\ast \; \pi^\ast \mathcal O(1)_{\log} \right) \\
	& \geq  \int_{W} \left[ \widetilde{g^\ast} \left( \frac{i}{2 \pi} \Theta(\widehat{h^\ast}) \right)^{2 p - 1} \right] \tag*{(by Proposition \ref{integralsubmanifold})}.
\end{align*}
Here, $W \cong \widetilde{f}^{-1} (Y)$ denotes the open part of $\overline{W}$. The last quantity, which is easily seen to be equal to $\int_{\widetilde{f}^{-1} (Y)}  \widetilde{f}^\ast \left[  \frac{i}{2 \pi} \Theta(\widehat{h^\ast})\right]^{2p-1}$, can be bounded from below by a direct computation.  

\begin{fact}
The following inequality holds:
\begin{equation} \label{interlognumber}
\int_{\widetilde{f}^{-1}(Y)} \widetilde{f}^\ast \left[ \frac{i}{2 \pi} \Theta(\widehat{h}^\ast) \right]^{2p - 1} \geq  \deg(f) \binom{2p - 1}{p}  \frac{\left( K_{\overline{X}} + D\right)^p \cdot f(W)}{(n+1)^p}.
\end{equation}
\end{fact}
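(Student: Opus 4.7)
The plan is to decompose $\widetilde{f}^\ast\,\frac{i}{2\pi}\Theta(\widehat{h}^\ast)$ into a horizontal piece $\alpha$ and a vertical Fubini--Study piece $\beta$ on the open part $W$, then to expand the $(2p-1)$-th power and isolate the unique surviving term on dimensional grounds.

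Pulling back formula \eqref{curvatureprojectivized} along $\widetilde{f}$, I would write
$$
\widetilde{f}^\ast\,\tfrac{i}{2\pi}\Theta(\widehat{h}^\ast) \;=\; \alpha + \beta,
$$
where $\alpha$ is the pullback of the horizontal part $-\frac{1}{2\pi}\langle\sigma,\Theta(h)(p_\ast\cdot,p_\ast\cdot)\sigma\rangle_h/\|\sigma\|_h^2$ and $\beta$ is the pullback of $\frac{1}{\pi}\omega^{FS}_h$. The form $\alpha$ factors through the differential of $p\circ\widetilde{f} = f\circ q_{\overline{W}}$ and therefore has hermitian rank at most $p=\dim\overline{V}$, so $\alpha^{p+1}=0$; dually, $\beta$ vanishes on horizontal directions and restricts to the Fubini--Study form along the $\mathbb{P}^{p-1}$-fibers of $q_{\overline{W}}$, so $\beta^{p}=0$ pointwise. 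Expanding $(\alpha+\beta)^{2p-1}$, the only index that survives is $k=p$, yielding
$$
\Bigl[\widetilde{f}^\ast\,\tfrac{i}{2\pi}\Theta(\widehat{h}^\ast)\Bigr]^{2p-1} \;=\; \binom{2p-1}{p}\,\alpha^{p}\wedge\beta^{p-1},
$$
which already accounts for the binomial coefficient in the claim.

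Next, I would invoke the pointwise bound on the Bergman curvature already used in the proof of Proposition \ref{ineqprop}: since $h_{Berg}$ has constant holomorphic sectional curvature $-4$, a direct computation with its curvature tensor yields $\alpha \geq \frac{1}{\pi}\,q_{\overline{W}}^\ast f^\ast\omega_{Berg}$ as positive hermitian $(1,1)$-forms on $W$. Raising to the $p$-th power preserves this inequality for positive $(1,1)$-forms, and wedging with $\beta^{p-1}$ preserves it further. Integrating fiber-by-fiber through $q_{\overline{W}}$, using the standard normalization $\int_{\mathbb{P}^{p-1}}\beta^{p-1}=1$, I obtain
$$
\int_{W}\Bigl[\widetilde{f}^\ast\,\tfrac{i}{2\pi}\Theta(\widehat{h}^\ast)\Bigr]^{2p-1} \;\geq\; \binom{2p-1}{p}\,\frac{1}{\pi^{p}}\int_{V}f^\ast\omega_{Berg}^{\,p}.
$$

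Finally, I would rewrite the remaining integral as an intersection number. Applying Proposition \ref{propintbergman} to the generically injective factor of $f$ and invoking the projection formula to account for the $d$-to-one behaviour of $f$ onto its image gives
$$
\int_{V}f^\ast\omega_{Berg}^{\,p} \;=\; \deg(f)\cdot\frac{\pi^{p}}{(n+1)^{p}}\,(K_{\overline{X}}+D)^{p}\cdot [f(\overline{V})],
$$
which combines with the previous inequality to deliver the stated bound. The main subtlety will be the bi-degree bookkeeping that reduces $(\alpha+\beta)^{2p-1}$ to a single term with coefficient exactly $\binom{2p-1}{p}$, together with fixing the normalization of $\omega^{FS}_h$ so that the vertical fiber integral equals precisely $1$; everything else is a straightforward application of the curvature computations already established in the paper.
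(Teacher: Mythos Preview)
Your overall strategy---decompose the curvature into a horizontal piece and a fibrewise Fubini--Study piece, isolate the $k=p$ term in the binomial expansion, bound the horizontal part by $\frac{1}{\pi}q_{\overline{W}}^\ast f^\ast\omega_{Berg}$, integrate along the fibres, and convert via Proposition~\ref{propintbergman}---is exactly the computation the paper sketches. The constants and the binomial coefficient come out correctly.

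There is, however, a genuine gap in your rank argument for $\beta$. You define $\alpha$ and $\beta$ as the $\widetilde{f}$-pullbacks of the two pieces in \eqref{curvatureprojectivized}. The claim $(\widetilde{f}^\ast\alpha)^{p+1}=0$ is fine, since $\alpha$ factors through $d(p\circ\widetilde{f})=d(f\circ q_{\overline{W}})$ and the target has dimension $p$. But your claim that $\widetilde{f}^\ast\beta$ ``vanishes on horizontal directions'' of $q_{\overline{W}}$ is not justified: $\beta$ vanishes on the horizontal distribution of $\mathbb{P}(T_X)\to X$ determined by the Chern connection of $h_{Berg}$, and $\widetilde{f}_\ast$ sends the $q_{\overline{W}}$-horizontal distribution into this one only when $V$ is totally geodesic in $X$. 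For a general immersed $V$ the second fundamental form produces a vertical component, so $(\widetilde{f}^\ast\beta)^p$ need not vanish.

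Two easy repairs are available. First, since every term $\binom{2p-1}{k}(\widetilde{f}^\ast\alpha)^k(\widetilde{f}^\ast\beta)^{2p-1-k}$ with $k\le p$ is a product of semi-positive $(1,1)$-forms and hence a non-negative top form, you may simply drop all terms except $k=p$ and obtain the inequality directly; the Fubini step then only needs the \emph{restriction} of $\widetilde{f}^\ast\beta$ to the fibres of $q_{\overline{W}}$, which is the induced Fubini--Study form and integrates to $1$. Second---and this is what the paper's phrase ``holomorphic sectional curvature decreases on subvarieties'' points to---you can instead apply \eqref{curvatureprojectivized} \emph{intrinsically} on $\mathbb{P}(T_{\overline V})$ with the induced metric $h_V=f^\ast h_{Berg}\big|_{T_{\overline V}}$. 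The resulting decomposition $\alpha'+\beta'$ then has $(\alpha')^{p+1}=0$ and $(\beta')^{p}=0$ for the honest reason, and the Gauss equation gives $\alpha'\ge\widetilde{f}^\ast\alpha\ge\frac{1}{\pi}q_{\overline{W}}^\ast f^\ast\omega_{Berg}$, after which your argument proceeds unchanged.
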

\begin{proof}
This is an explicit computation, following the ideas of \cite{div16}. Using the expression of the Bergman metric \eqref{bergmandef}, we can integrate \eqref{interlognumber} on the fibers of the projection $q_{\overline{W}}$. Since the holomorphic sectional curvature decreases on subvarieties, everything is finally bounded from below, up to a normalization constant, by the volume of $V$ with respect to $h_{Berg}$. Finally, since $\mathrm{Ric}(h_{Berg}) = 2(n+1) \omega_{Berg}$, this volume can be related to $c_1 (K_{\overline{X}} + D)^p \cdot f(V)$. The main technical part is to keep track of the proportionality constants.
\end{proof}

\begin{rem} \begin{enumerate}
\item When $\dim V = n$, the same computations permit to obtain the more precise inequality: 
$$
c_1 \mathcal O(1)_{\log}^{2n-1} \geq \binom{2n}{n} \frac{ \left( K_{\overline{X}} + D \right)^n }{(n+1)^n},
$$
which is in fact an equality by Hirzebruch's proportionality principle in the non-compact case (see \cite{mum77}).
\item If $\dim V = 1$, we can refine the above computations to obtain the following inequality:
$$
\int_{\widetilde{f}^{-1}(Y)} \widetilde{f}^\ast \left[ \frac{i}{2 \pi} \Theta(\widehat{h}^\ast) \right] \geq 2 \deg(f) \frac{ \left( K_{\overline{X}} + D  \right) \cdot f(\overline{V}) } {n +1}. \\
$$
\end{enumerate}
\end{rem}

Now, we can compute the term $(-E)^{2p -1} \cdot \widetilde{W}$, appearing in \eqref{intersectiontangent}. Let $E_W = \widetilde{g}^{-1} (E)$. Then
$$
(-E)^{2 p - 1} \cdot \widetilde{W} = - \left( \left. E \right|_E \right)^{2(p-1)} \cdot \left(\left. \widetilde{W} \right|_E\right) = -  \left( g^\ast \left. E \right|_{E} \right)^{2(p-1)} \cdot E_W.
$$
Using Proposition \ref{structureofE}, we see easily that there are isomorphisms $E \simeq \mathbb P\left(\left. T_{\overline{X}}\right|_D\right)$, and $E_W \simeq \mathbb P(\left. T_{\overline{V}} \right|_{f^{-1}(D)})$, the morphism $E_W \longrightarrow E$ being induced by the inclusion $T_{\overline{V}} |_{f^{-1} (D)} \hookrightarrow T_X |_D$. Finally, the functoriality of tautological bundles under pull-backs gives
\begin{equation} \label{formulaintersectionW0}
\left( g^\ast \left. E \right|_{E} \right)^{2(p-1)} \cdot E_W = \int_{E_W} c_1 \left( \widetilde{f}^\ast_D \mathcal O_E (1) \right)^{2(p -1)} = \int_{E_W} c_1 \left( \mathcal O_{E_W} (1) \right)^{2(p-1)}.
\end{equation}
We can bound this last number from above, for example by constructing a natural metric on the tautological bundle of $E \cong \mathbb P(T_{\overline{X}} |_D)$. To do this, recall that each component of $D$ admits a tubular neighborhood in $\overline{X}$, which gives the isomorphism \eqref{exactsequenceboundary}.  As before, we can integrate the curvature of such a metric on the fibers of the projection $E_W \longrightarrow f^{-1} (D)$, to estimate the quantity \eqref{formulaintersectionW0} in terms of intersection numbers on $f^{-1}(D)$. Then
\begin{equation} \label{ineqintersect}
(-E)^{2p-1} \cdot \widetilde{W} \geq - \int_{f^{-1} (D)} {\left[ - \frac{i}{2\pi} \Theta(N_{D / \overline{X}}) \right]^{p-1}} = - \left(\left. -D \right|_{\overline{W}}\right)^{p-1} = (-D)^p \cdot \overline{V}.
\end{equation}

\begin{rem} In the case  $\overline{V} = \overline{X}$, we can compute exactly the top self-intersection $(-E)^{2n -1}$, which is given by the top Segre class of $T_{\overline{X}} |_{D}$. By \eqref{exactsequenceboundary}, this implies that $(-E)^{2n -1} = (-D)^n$.
\end{rem}

Putting everything together,  we have the following result:

\begin{prop} \label{finalineq} Let $\overline{X}$ be a toroidal compactification, and let $\overline{V} \overset{f}{\longrightarrow} \overline{X}$ be an immersion of a smooth manifold $\overline{V}$ of dimension $p$, not necessarily injective, such that $f(\overline{V}) \not\subset D$. Then if $\mathcal O_{\overline{V}}(1)$ is the tautological bundle of $\mathbb P(T_{\overline{V}})$, we have the following inequality:
\begin{equation} \label{ineqimmersed}
c_1 \mathcal O_{\overline{V}} (1)^{2p - 1} \geq \left[ \binom{2p - 1}{p} \frac{1}{(n+1)^p} (K_{\overline{X}} + D)^p + (-D)^p \right] \cdot f_\ast \left[\overline{V}\right], 
\end{equation}
where $f_\ast \left[ \overline{V} \right] = \deg(f) \left[f(\overline{V}) \right]$ denotes the image cycle of $\overline{V}$. If $p = 1$, we have the more precise inequality
\begin{equation} \label{ineqcurves2}
\deg K_{\overline{V}} = c_1 \mathcal O_{\overline{V}} (1) \geq \left[ \frac{2}{n+1} (K_{\overline{X}} + D)  - D \right] \cdot f_\ast \left[\overline{V}\right], 
\end{equation}
and if $p = n$, we have the equality (via \cite{mum77}).
\begin{equation} \label{ineqwhole}
c_1 \mathcal O_{\overline{V}} (1)^{2n - 1} =  \left[ \binom{2n}{n} \frac{1}{(n+1)^n} (K_{\overline{X}} + D)^n + (-D)^n \right] \cdot f_\ast \left[\overline{V}\right]. \\
\end{equation}
In particular, if $\overline{X}$ satisfies the hypotheses of Theorem \ref{thmnef}, these inequalities give lower bounds on $\mathrm{vol} \left( \Omega_{\overline{V}} \right)$.
\end{prop}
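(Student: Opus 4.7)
The plan is to assemble the inequalities already developed in this subsection into a single statement, applied to the immersion $f$. First, form the commutative square obtained by blowing up $\widetilde{f}^{-1}(Z) \subset \overline{W} = \mathbb P(T_{\overline{V}})$ and $Z \subset \overline{Y} = \mathbb P(T_{\overline{X}})$ as in Section \ref{biratsection}, and use the intersection-theoretic identity \eqref{intersectiontangent}, which decomposes
$$
c_1 \mathcal{O}_{\overline{V}}(1)^{2p-1} = \bigl[\widetilde{g}^\ast \pi^\ast c_1 \mathcal{O}(1)_{\log}\bigr]^{2p-1} + (-E)^{2p-1} \cdot \widetilde{W}.
$$
It then suffices to bound each of the two terms from below.

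For the first term, observe that $\widetilde{g}^\ast \pi^\ast \mathcal{O}(1)_{\log}$ is nef by Theorem \ref{nefcotangentlog}, so its top self-intersection coincides with its volume. Proposition \ref{integralsubmanifold}, applied to the immersion $\widetilde{f}$, gives
$$
\bigl[\widetilde{g}^\ast \pi^\ast c_1 \mathcal{O}(1)_{\log}\bigr]^{2p-1} \geq \int_{\widetilde{f}^{-1}(Y)} \widetilde{f}^\ast\!\left[\tfrac{i}{2\pi} \Theta(\widehat{h}^\ast)\right]^{2p-1},
$$
and the Fact recalled above bounds this last integral from below by $\deg(f)\binom{2p-1}{p}(n+1)^{-p}(K_{\overline X}+D)^p \cdot [f(\overline{V})]$, which is exactly $\binom{2p-1}{p}(n+1)^{-p}(K_{\overline X}+D)^p \cdot f_\ast[\overline{V}]$.

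For the second term, apply the computation leading to \eqref{ineqintersect}: under the identifications $E \simeq \mathbb P(T_{\overline{X}}|_D)$ and $E_W \simeq \mathbb P(T_{\overline{V}}|_{f^{-1}(D)})$ provided by Proposition \ref{structureofE}, integration over the fibers of $E_W \to f^{-1}(D)$ of a natural curvature current on the tautological bundle reduces $(-E)^{2p-1} \cdot \widetilde{W}$ to the top Segre class of $T_{\overline{X}}|_{f^{-1}(D)}$. The splitting \eqref{exactsequenceboundary} of $T_{\overline{X}}|_D$ as the sum of a trivial bundle and $N_{D/\overline{X}}$ collapses this Segre class to a power of $-D$, yielding $(-E)^{2p-1}\cdot \widetilde{W} \geq (-D)^p \cdot f_\ast[\overline{V}]$. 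Summing the two bounds gives \eqref{ineqimmersed}.

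The refinements are then direct. For $p=n$, the first inequality becomes an equality by the Hirzebruch proportionality principle in the non-compact case from \cite{mum77}, and the second is already an equality (this is in fact the case $\overline{V} = \overline{X}$, cf.\ the remark preceding the proposition), yielding \eqref{ineqwhole}. For $p=1$, the Fact is replaced by its stronger curve version noted in the remark, which improves the constant $\binom{1}{1} = 1$ to $2$; combined with the analogous $-D\cdot f_\ast[\overline{V}]$ term, this gives \eqref{ineqcurves2}. The main delicate point is keeping track of the normalization constants throughout the fiber integration in the Fact (these computations follow the model of \cite{div16}); beyond this, the proof is a bookkeeping exercise, being careful to use the image cycle $f_\ast[\overline{V}]=\deg(f)[f(\overline{V})]$ since $f$ is not assumed injective.
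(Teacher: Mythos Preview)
Your proposal is correct and follows essentially the same approach as the paper: the proposition is an assembly of the identity \eqref{intersectiontangent}, the volume bound coming from nefness plus Proposition \ref{integralsubmanifold} and the Fact, and the exceptional-divisor estimate \eqref{ineqintersect}, with the $p=1$ and $p=n$ refinements coming from the remarks preceding the statement. The only minor imprecision is that for general $p$ the passage through the Segre class is an inequality rather than an exact reduction, but you record the correct final bound.
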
 

\bibliographystyle{amsalpha}
\bibliography{biblio} 
\textsc{Benoît~Cadorel, Aix Marseille Université, CNRS, Centrale Marseille, I2M, UMR~7373, 13453 Marseille, France} \par\nopagebreak
  \textit{E-mail address}: \texttt{benoit.cadorel@univ-amu.fr}
\vfill
\end{document}